\newtheorem{theorem}{Theorem}[section]
\newtheorem{proposition}[theorem]{Proposition}
\newtheorem{conj}[theorem]{Conjecture}
\newtheorem{bigthm}{Theorem}
\newtheorem{bigcor}[bigthm]{Corollary}
\newtheorem{lemma}[theorem]{Lemma}
\newtheorem{corollary}[theorem]{Corollary}
\theoremstyle{definition}
\newtheorem{definition}[theorem]{Definition}
\newtheorem{remark}[theorem]{Remark}
\numberwithin{equation}{section}
\newcommand{\om}{\omega}
\newcommand{\rank}{\rm rank}
\newcommand\cF{{\mathcal F}}
\newcommand\cG{{\mathcal G}}
\newcommand{\R}{\ensuremath{\mathbb{R}}}
\let\ep=\varepsilon
\definecolor{violet}{rgb}{0.0,0.2,0.7}
\definecolor{rouge2}{rgb}{0.8,0.0,0.2}
\begin{document}

\baselineskip=15.5pt

\title[Geometry of $K$-trivial Moishezon manifolds]{Geometry of $K$-trivial Moishezon 
manifolds : decomposition theorem and holomorphic geometric structures}

\author[I. Biswas]{Indranil Biswas}

\address{Department of  Mathematics, Shiv Nadar University, NH91, Tehsil
Dadri, Greater Noida, Uttar Pradesh 201314, India}

\email{indranil.biswas@snu.edu.in, indranil29@gmail.com}

\author[J. Cao]{Junyan Cao}

\address{Universit\'e C\^ote d'Azur, CNRS, LJAD, France}

\email{junyan.cao@unice.fr}

\author[S. Dumitrescu]{Sorin Dumitrescu}

\address{Universit\'e C\^ote d'Azur, CNRS, LJAD, France}

\email{dumitres@unice.fr}

\author[H. Guenancia]{Henri Guenancia}

\address{Institut de Math\'ematiques de Toulouse; UMR 5219, Universit\'e de Toulouse; CNRS, 
UPS, 118 route de Narbonne, F-31062 Toulouse, France}

\email{henri.guenancia@math.cnrs.fr}

\subjclass[2010]{53B35, 32M05, 53A55, 14J32}

\keywords{Rigid geometric structure; algebraic reduction; principal torus bundle; Moishezon manifold.}

\date{}

\begin{abstract}
Let $X$ be a compact complex manifold such that its canonical bundle $K_X$ is numerically 
trivial. Assume, additionally, that $X$ is either Moishezon or $X$ is Fujiki with dimension at most 
four. Using the MMP and classical results in foliation theory, we prove a 
Beauville--Bogomolov type decomposition theorem for $X$. We deduce that holomorphic 
geometric structures of affine type on $X$ are in fact locally homogeneous away from an 
analytic subset of complex codimension at least two, and that they cannot be rigid unless 
$X$ is an \'etale quotient of a compact complex torus. Moreover, we establish a 
characterization of torus quotients using the vanishing of the first two Chern classes which 
is valid for any compact complex $n$-folds of algebraic dimension at least $n-1$. Finally, 
we show that a compact complex manifold with trivial canonical bundle bearing a rigid 
geometric structure must have infinite fundamental group if either $X$ is Fujiki, or $X$ is 
a threefold, or $X$ is of algebraic dimension at most one.
\end{abstract}

\maketitle

\tableofcontents

\section{Introduction}

The topic of geometric structures on manifolds, especially the automorphism groups
of such structures, is classical. The fundamental works of several leading
mathematicians, such as C. F. Gauss, B. Riemann, F. Klein, S. Lie and E. Cartan, created the foundation of this field. In particular, 
E. Cartan introduced and studied what is now known as Cartan geometry. These are geometric structures infinitesimally modelled on the homogeneous spaces 
\cite{Sh}. These geometric structures are flat when they are actually locally modelled (not just
infinitesimally) on the homogeneous spaces \cite{Sh}.

Important new results pertaining to the partition of a geometric manifold into orbits of local automorphisms of the geometric
structure were obtained by Gromov in \cite{Gr} (see also the elegant expository work \cite{DG}). In \cite{Gr} Gromov introduced the
{\it rigid geometric structures} (see Definition \ref{def-a}) as a broad class of geometric structure for which a (local)
automorphism is completely determined by its finite order jet at any given point. Affine and projective connections on the tangent bundle, 
pseudo-Riemannian metrics and conformal structures of dimension at least three are important examples of rigid geometric structures 
(they are also examples of Cartan geometries). On the other hand, symplectic structures and foliations are not rigid.

Earlier works, \cite{BD,BD2, BD3,D1,D2, BDG}, which were inspired by \cite{Gr,DG}, aimed to adapt Gromov's ideas and arguments 
to holomorphic geometric structures on compact complex manifolds. In that vein, the third-named author proved the following theorem: 

\begin{theorem}[{\cite{D2}}]
\label{thm sorin}
Let $X$ be a compact K\"ahler manifold $X$ with trivial first Chern class bearing a holomorphic geometric structure $\phi$ of affine type. Then
\begin{enumerate}[label=(\roman*)]
\item $\phi$ is locally homogeneous. 
\item If $\phi$ is rigid, then $X$ is covered by a compact complex torus. 
\end{enumerate}
\end{theorem}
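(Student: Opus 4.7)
My approach combines the Beauville--Bogomolov decomposition with Bochner-type vanishing on Ricci-flat K\"ahler manifolds and an integration theorem for affine-type structures. By the Beauville--Bogomolov theorem, a suitable finite \'etale cover $\pi \colon \widetilde{X} \to X$ decomposes holomorphically (and isometrically for the Ricci-flat K\"ahler--Einstein metric) as
\[\widetilde{X} \;=\; T \times \prod_{i} Y_i \times \prod_{j} Z_j,\]
where $T$ is a complex torus, each $Y_i$ is a simply connected strict Calabi--Yau manifold, and each $Z_j$ is an irreducible holomorphic symplectic manifold. Since both local homogeneity and rigidity lift through \'etale covers, it suffices to prove the conclusions on $\widetilde X$ for the pullback $\widetilde\phi := \pi^*\phi$, with $\widetilde X$ carrying the product Ricci-flat K\"ahler--Einstein metric supplied by Yau's theorem.

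\emph{Proof of (i).} A classical Bochner argument shows that on each compact Ricci-flat K\"ahler factor every global holomorphic tensor is parallel for the Chern connection (which coincides with the Levi--Civita connection). Since $\widetilde\phi$ is of affine type, after locally embedding the fiber of its defining jet bundle into an affine space one may realize $\widetilde\phi$ as a tuple of holomorphic tensors satisfying algebraic relations; its tensor components are therefore parallel, and hence so is $\widetilde\phi$. Parallelism forces the algebra of infinitesimal automorphisms of $\widetilde\phi$ to have constant dimension at each point, and an integration theorem in the style of Amores for affine-type structures upgrades these infinitesimal symmetries to genuine local biholomorphic automorphisms whose pseudogroup acts transitively on each factor. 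This yields local homogeneity of $\widetilde\phi$, and hence of $\phi$.

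\emph{Proof of (ii).} Assume in addition that $\phi$ is rigid. Combining rigidity with (i) produces a finite-dimensional Lie algebra $\mathfrak g$ of local holomorphic vector fields preserving $\widetilde\phi$ and acting locally transitively on $\widetilde X$. The Bochner formula on a compact Ricci-flat K\"ahler manifold with irreducible $\mathrm{SU}$- or $\mathrm{Sp}$-holonomy forces $H^0(Y_i, T_{Y_i}) = H^0(Z_j, T_{Z_j}) = 0$; rigidity together with simple connectedness then promote this to the vanishing of \emph{local} holomorphic vector fields preserving $\widetilde\phi$ on each $Y_i$ and $Z_j$, since under rigidity every local Killing field of $\widetilde\phi$ on a simply connected manifold extends globally. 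Consequently every element of $\mathfrak g$ vanishes along the $Y_i$ and $Z_j$ directions, and local transitivity forces these factors to be absent. Thus $\widetilde X = T$, and $X$ is finitely covered by a complex torus.

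\emph{Main obstacle.} The delicate point is the passage from parallelism to local homogeneity in step (i): parallelism only supplies pointwise infinitesimal symmetries of $\widetilde\phi$, and integrating them to honest local biholomorphisms is where the affine-type hypothesis is essential---without it, projective-type pathologies can obstruct local transitivity. A secondary subtlety in (ii) is propagating the Bochner vanishing from global to local holomorphic symmetries on the Calabi--Yau and hyperk\"ahler factors, which genuinely requires the rigidity hypothesis.
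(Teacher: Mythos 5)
Your overall route --- Yau's Ricci--flat metric, the Bochner principle for part (i), and the Beauville--Bogomolov decomposition plus the vanishing of $H^0(Y_i,T_{Y_i})$ and $H^0(Z_j,T_{Z_j})$ combined with global extension of local Killing fields for part (ii) --- is exactly the one the paper attributes to \cite{D2} and reproduces in its generalizations (Lemma~\ref{loc hom} for (i), the proof of Theorem~\ref{moishezon}(ii) for (ii)). Your part (ii) is essentially the paper's argument; the only point to tighten is that $\widetilde X$ itself is not simply connected because of the torus factor, so before invoking Theorem~\ref{extend} one must first induce a rigid affine-type structure $\phi_Y$ on the simply connected factor $Y=\prod Y_i\times\prod Z_j$ using the triviality of $TT$ (as in \cite{D2}), or equivalently pass to the universal cover $\mathbb C^k\times Y$.

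The genuine gap is in part (i), precisely at the point you yourself flag. The implication ``all holomorphic tensors on $\widetilde X$ are parallel $\Rightarrow$ $\widetilde\phi$ is locally homogeneous'' is the entire content of \cite[Lemma~3.2]{D2} (Lemma~\ref{loc hom} here), and your proposed bridge does not supply it. First, parallelism of the tensor components of $\widetilde\phi$ for the Levi--Civita connection does not produce ``pointwise infinitesimal symmetries'': parallel transport is path-dependent (the holonomy of an irreducible Ricci-flat factor is all of $\mathrm{SU}(n)$ or $\mathrm{Sp}(n)$), so it does not integrate to local biholomorphisms preserving $\widetilde\phi$, nor does it yield local Killing fields. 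Second, an ``integration theorem in the style of Amores'' is not available: the Nomizu--Amores--Gromov extension theory (Theorem~\ref{extend}) concerns local Killing fields of \emph{rigid} structures, while part (i) makes no rigidity assumption. The actual mechanism is different: one equivariantly embeds the affine variety $Z$ and its jet prolongations $Z^{(s)}$ into linear $D^{r+s}$-modules, so that the Rosenlicht--Gromov stratification of $X$ by the orbit type of $\phi^{(s)}$ is cut out by holomorphic tensors and scalar invariants; the Bochner principle forces each such tensor to be identically zero or nowhere vanishing and each invariant to be constant, hence $X$ is a single stratum, and Gromov's integrability theorem for isometric jets of structures of \emph{algebraic affine type} then converts the jet equivalences into genuine local isometries. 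Without this step (or an explicit appeal to \cite[Lemma~3.2]{D2}), part (i) is not proved.
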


It is important to keep in mind that the proof of the first statement relies in an essential way on the Bochner principle, while the 
second statement uses the Beauville--Bogomolov decomposition theorem \cite{Be,Bo1}.

\subsection{A Beauville--Bogomolov decomposition theorem for Moishezon manifolds}

The goal of the present paper is to generalize Theorem~\ref{thm sorin} to compact complex manifolds with trivial first Chern class that 
are not necessarily K\"ahler. A distinguished class of such manifolds is provided by compact Fujiki
manifolds; see Section~\ref{sec fujiki} for explicit non-K\"ahler examples. At the moment, there is 
no Bochner principle available in full generality, and a structural result in the spirit of the Beauville--Bogomolov theorem is 
unknown as well (see Conjecture~\ref{conj structure}). In short, it is expected that compact Fujiki manifolds with trivial canonical bundle are made up from irreducible K\"ahler varieties (i.e., tori, Calabi--Yau or symplectic holomorphic) with at most terminal singularities using 
\begin{enumerate}[label=$\bullet$]
\item small modifications,
\item products,
\item finite \'etale quotients.
\end{enumerate}

Our first result gives a partial answer to the above expectation. 

\begin{bigthm}\label{thma}
Let $X$ be a compact Fujiki manifold such that $c_1(X)=0\,\in\, {\rm H}^2(X, \,\R)$. Assume that one of the following holds:
\begin{enumerate}[label= $\circ$]
\item $X$ is Moishezon, or
\item $\dim X \,\le\, 4$.
\end{enumerate}
Then, there exists a finite \'etale cover $X'\,\longrightarrow\, X$ and a decomposition
\[X'\simeq T\times \prod_{i\in I} Y_i \times \prod_{j\in J} Z_j\]
where $T$ is a compact complex torus, the $Y_i$'s are irreducible Calabi--Yau manifolds and the $Z_j$'s are irreducible holomorphic symplectic manifolds. 

Moreover, each factor $Y_i$ (respectively, $Z_j$) in the decomposition is bimeromorphic to a K\"ahler variety with terminal singularities which is irreducible Calabi--Yau (respectively, irreducible holomorphic symplectic).
\end{bigthm}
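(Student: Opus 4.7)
The plan is to reduce to the singular Beauville--Bogomolov decomposition theorem for varieties with terminal singularities and numerically trivial canonical class, now well established in the projective and K\"ahler settings thanks to work of Druel, Greb--Kebekus--Peternell, H\"oring--Peternell, Guenancia and others. The bridge between $X$ and such a singular model is provided by the MMP.

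As a first step, I would run a minimal model program for $X$ in the relevant bimeromorphic category: the Moishezon MMP (after Nakayama) in the first case, and the MMP for K\"ahler threefolds and fourfolds (after H\"oring--Peternell and Campana--H\"oring--Peternell) in the second. The output is a variety $\hat X$, bimeromorphic to $X$, with terminal singularities and $K_{\hat X}\equiv 0$. Invoking abundance for $K$-trivial minimal models (Kawamata) and replacing $\hat X$ by a finite \'etale cover, one may further assume $K_{\hat X}\sim 0$; in the Moishezon case one also uses that a $K$-trivial terminal Moishezon variety is in fact projective, so that the projective singular decomposition theorem applies.

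Applying the singular Beauville--Bogomolov theorem to $\hat X$ then yields, after a further finite \'etale cover,
\[\hat X'\simeq T\times \prod_{i\in I}\hat Y_i\times \prod_{j\in J}\hat Z_j,\]
where $T$ is a torus, each $\hat Y_i$ is an irreducible Calabi--Yau variety with terminal singularities, and each $\hat Z_j$ is an irreducible holomorphic symplectic variety with terminal singularities. This splitting is obtained via the holonomy decomposition of the reflexive tangent sheaf of $\hat X'$ using the singular K\"ahler--Einstein metric produced by Eyssidieux--Guedj--Zeriahi.

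The main obstacle will be transferring this decomposition back to the smooth manifold $X$. Since $K_X\equiv 0$, no step of the MMP on $X$ can be a divisorial contraction, so the bimeromorphic map $X'\dashrightarrow \hat X'$ relating the corresponding \'etale cover of $X$ to $\hat X'$ is a small modification, i.e. an isomorphism in codimension one. The splitting of $T_{\hat X'}$ therefore extends across the indeterminacy locus to a splitting of the reflexive tangent sheaf of $X'$, which by smoothness of $X'$ is an honest direct sum of integrable holomorphic subbundles. A Beauville-type argument using the universal cover, combined with the fact that tangent-sheaf splittings with trivial canonical bundle lift to genuine product decompositions, then yields $X'\simeq T\times \prod Y_i\times \prod Z_j$ with smooth factors, each bimeromorphic to the corresponding $\hat Y_i$ or $\hat Z_j$; this last point gives the final assertion of the theorem. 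The delicate part is to ensure that the smooth factors inherit the irreducible Calabi--Yau or holomorphic symplectic property intrinsically rather than merely bimeromorphically, which will require a careful analysis of the way small modifications interact with the holonomy-invariant distributions produced by the singular K\"ahler--Einstein metric.
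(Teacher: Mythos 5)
Your overall strategy---pass to a terminal $K$-trivial minimal model via the MMP, apply the singular Beauville--Bogomolov theorem of H\"oring--Peternell and Bakker--Guenancia--Lehn there, and transport the decomposition back through a bimeromorphic map that is an isomorphism in codimension one---is exactly the one the paper follows. One intermediate claim needs repair: the MMP cannot be run on $X$ itself, since $X$ is merely Moishezon or Fujiki; one must first take a projective (resp.\ K\"ahler) modification $X'\to X$, and that modification certainly contracts divisors, so the assertion that ``no step of the MMP on $X$ can be a divisorial contraction'' does not by itself show that $X\dashrightarrow X_{\rm min}$ is small. The paper instead compares discrepancies on a common resolution: both $X$ (smooth) and $X_{\rm min}$ are terminal, and the divisor $D=q^*K_{X_{\rm min}}-p^*K_X$ is $p$-nef with $p_*D\le 0$ while $-D$ is $q$-nef with $q_*(-D)\le 0$, so the negativity lemma forces $D=0$; this simultaneously gives $K_{X_{\rm min}}$ torsion (no appeal to abundance needed) and the smallness of $\phi$. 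This is fixable, but it is not the argument you gave.

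The genuine gap is the final step. You invoke ``a Beauville-type argument using the universal cover, combined with the fact that tangent-sheaf splittings with trivial canonical bundle lift to genuine product decompositions.'' No such fact is available here: Beauville's argument rests on the de~Rham decomposition of the universal cover with respect to a \emph{complete} Ricci-flat K\"ahler metric, and on a non-K\"ahler Fujiki manifold the only Ricci-flat metric at hand is the incomplete one living on the Zariski open set $U$ where $\phi$ is an isomorphism. An integrable splitting $TX'=\cF\oplus\cG$ of the tangent bundle of a compact complex manifold does not in general integrate to a product. The paper's proof of precisely this point occupies most of its Step~4: one observes that the general leaf of $\cF$ is compact (being the strict transform of a fiber of the projection $X_{\rm min}\simeq Y\times Z\to Z$), establishes a uniform volume bound for these leaves by exhibiting them as images of homologous cycles on a resolution of the graph of $\phi$ so that Bishop's theorem forces \emph{all} leaves to be compact, proves finiteness of the holonomy by a counting argument, applies the holomorphic Reeb stability theorem to obtain a holomorphic map $X'\to\mathcal C_k(X')$ to the Barlet space of cycles, and finally shows that the product map $X'\to C_{\cF}\times C_{\cG}$ is finite and generically injective, hence an isomorphism by Zariski's main theorem. (The torus factor is also split off separately beforehand, via the \'etale triviality of the Albanese map, rather than transported from the singular model.) Without some substitute for this chain of arguments your proposal does not produce the product decomposition of the smooth manifold, which is the heart of the theorem.
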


We refer to Definition~\ref{def ICYIHS} and the remarks below it for the definitions of irreducible Calabi--Yau manifolds and irreducible holomorphic symplectic manifolds, which
actually mimic the definition in the singular K\"ahler case provided in e.g. \cite{GGK, CGGN} and coincide with the usual definitions in the smooth K\"ahler case.

From the second part in the statement of Theorem~\ref{thma} and the properties K\"ahler ICY and IHS varieties \cite{GGK, CGGN}, we deduce 
\begin{enumerate}[label=$\bullet$]
\item a Bochner principle for holomorphic tensors on $X$, 
\item a polystability result for $TX$ with respect to some movable classes, and
\item finiteness results for the linear part of the fundamental group of $X$;
\end{enumerate}
see Theorem~\ref{polystability} and Corollary~\ref{fund group}. 

In the Moishezon case, we provide alternative and purely algebraic arguments to study the
semistability of the tangent bundle (see Section~\ref{semistability} and Proposition~\ref{prepop}). 

An important application of the Bochner principle is provided by the following partial generalization of the first item of Theorem~\ref{thm sorin} in the case of Moishezon manifolds or Fujiki manifolds with dimension at most four. 

\begin{bigcor}
Let $X$ be as in Theorem~\ref{thma}.
Then there exists a Zariski open subset $U \,\subset\, X$, whose complement has complex codimension at least two, such that any holomorphic geometric structure $\phi$ of affine type on $X$ is locally homogeneous on $U$.
\end{bigcor}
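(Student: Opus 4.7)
The plan is to adapt the strategy of \cite{D2} to the non-K\"ahler setting, using the Bochner principle listed among the consequences of Theorem~\ref{thma}. By the very definition of a geometric structure of affine type, $\phi$ is a holomorphic section of a holomorphic vector bundle constructed from $TX$ via a polynomial $\mathrm{GL}_n(\mathbb{C})$-representation; equivalently, $\phi$ can be regarded as a global holomorphic tensor field on $X$.

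The second step invokes the Bochner principle. Theorem~\ref{thma} furnishes, after passing to a finite \'etale cover, a product decomposition whose factors are a compact complex torus together with irreducible Calabi-Yau or holomorphic symplectic manifolds, each bimeromorphic to a K\"ahler variety with terminal singularities. These K\"ahler models carry singular Ricci-flat K\"ahler-Einstein metrics, whose Chern connections can be transported, via the bimeromorphisms, to a holomorphic torsion-free connection $\nabla$ defined on the complement $U \,\subset\, X$ of an analytic subset of codimension at least two -- namely the union of the exceptional loci of the bimeromorphisms, the images of the singular loci, and the branch locus of the \'etale cover. The Bochner identity for the underlying Ricci-flat metrics then implies that every holomorphic tensor on $X$ is $\nabla$-parallel over $U$; in particular, $\phi$ is $\nabla$-parallel on $U$.

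The final step derives local homogeneity from parallelism. Given two points $x,y$ in a connected component of $U$, parallel transport of frames along any path yields a $\mathbb{C}$-linear isomorphism $T_x X \,\to\, T_y X$ that sends $\phi(x)$ to $\phi(y)$; because $\nabla$ is holomorphic and torsion-free, its exponential map integrates this linear identification to a local biholomorphism $\psi$ between open neighborhoods of $x$ and $y$. The $\nabla$-parallelism of $\phi$ forces $\psi^{*} \phi \,=\, \phi$, so $\psi$ is an automorphism of the geometric structure. Consequently, the pseudogroup of local biholomorphic automorphisms of $\phi$ acts transitively on $U$, which is precisely the meaning of local homogeneity.

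The principal obstacle lies in the second step: one must justify the Bochner identity directly on the possibly non-K\"ahler manifold $X$ and verify that the pullback connection $\nabla$ extends holomorphically and torsion-freely across a set of codimension at least two. This rests on the geometry of the singular Ricci-flat metrics on the K\"ahler ICY/IHS models supplied by Theorem~\ref{thma}, combined with a Hartogs-type extension across the exceptional locus to transfer the parallelism back to $X$. Once these analytic inputs are in place, the derivation of local homogeneity from parallelism is essentially formal and follows the classical Amores--Nomizu pattern.
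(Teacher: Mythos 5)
Your overall architecture matches the paper's: the corollary is obtained by combining the Bochner principle on the codimension-two-complement open set $U$ (part $(i)$ of Theorem~\ref{polystability}) with the implication ``Bochner principle $\Rightarrow$ local homogeneity for affine-type structures'' (Lemma~\ref{loc hom}, i.e.\ \cite[Lemma~3.2]{D2}). However, your execution of the second implication contains two genuine gaps.

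First, a holomorphic geometric structure of affine type is \emph{not} in general a holomorphic tensor field. It is a $D^r$-equivariant map $R^r(X)\to Z$ with $Z$ affine, i.e.\ a section of a fibre bundle associated to the $r$-frame bundle; for $r\ge 2$ (e.g.\ a holomorphic affine connection, the motivating example throughout the paper) this is a section of a bundle associated to $R^r(X)$, not to the $\mathrm{GL}_n$-frame bundle, so it is not a tensor. The reduction to tensors is exactly the nontrivial content of Lemma~\ref{loc hom}: one embeds $Z$ equivariantly into a linear $D^r$-representation and exploits the fact that such representations are filtered with graded pieces on which the unipotent part of $D^r$ acts trivially (hence tensorial graded pieces), or, in Gromov's formulation, one shows that the degeneracy loci of the sheaf of local Killing fields of $\phi$ are cut out by holomorphic tensors, to which the Bochner principle then applies. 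Your first step silently assumes this reduction.

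Second, and more seriously, the connection you propose to use does not exist. The Chern (equivalently Levi-Civita) connection of the singular Ricci-flat K\"ahler metric pulled back to $U$ is a \emph{smooth}, not holomorphic, connection: a Chern connection is holomorphic precisely when its curvature vanishes, and the Ricci-flat metrics on ICY/IHS factors are not flat. Consequently the step ``because $\nabla$ is holomorphic and torsion-free, its exponential map integrates the parallel transport to a local \emph{biholomorphism}'' rests on a false premise, and the Ambrose--Nomizu-type argument you invoke would at best produce local isometries of the metric, not local automorphisms of $\phi$ (which, per the first point, is not a parallel tensor anyway). The paper avoids all of this: it takes the Bochner principle on $U$ as the analytic input (every holomorphic tensor on $U$ extends to $X$ by Hartogs since $\mathrm{codim}(X\setminus U)\ge 2$, is parallel, hence vanishes identically if it vanishes at one point) and then quotes Lemma~\ref{loc hom}, whose proof is Gromov-theoretic rather than based on parallel transport. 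A correct write-up should either cite that lemma or reproduce its argument; the parallel-transport shortcut does not close.
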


As an easy application, we show that if a compact Fujiki manifold $X$ with trivial first Chern class bears a rigid holomorphic geometric structure, then $\pi_1(X)$
must be infinite (see Theorem~\ref{thmFujiki}).\\

{\it A few words on the proof of Theorem~\ref{thma}}.

\noindent
First, $X$ admits a projective/K\"ahler bimeromorphic model which 
itself admits a singular minimal model $X_{\rm min}$ in that same category by results in the Minimal Model Program (\cite{BCHM} 
and \cite{Dru11} in the projective case, and \cite{HP1} and \cite{DHP} in the K\"ahler case in dimension at most four). Next, an easy application of the negativity lemma shows that $X_{\rm min}$ has torsion canonical bundle, so that one can apply to $X_{\rm min}$ the decomposition theorem proved in \cite{HP2} and \cite{BGL} in the projective and K\"ahler case respectively. From there, one can show that the polystable decomposition of $TX$ induces regular foliations with compact leaves. Using Reeb stability theorem and the Barlet space of cycles on $X$, one can then obtain the product structure on $X$.

If one could prove that a compact K\"ahler manifold with zero numerical dimension has a minimal model, then Theorem~\ref{thma} would be valid for any compact Fujiki manifold with vanishing first Chern class. 

\subsection{Uniformization by compact complex tori}

Invoking Theorem~\ref{thma}, we give several characterizations of compact complex manifolds covered by a compact complex torus, partly in the spirit of the second item in Theorem~\ref{thm sorin}.

\begin{bigthm}
\label{thmb}
Let $X$ be a compact complex manifold of dimension $n$ such that $c_1(X)\,=\,0\,\in\, {\rm H}^2(X, \,\R)$, and denote by $a(X)$ 
the algebraic dimension of $X$. Assume that one of the following holds:
\begin{enumerate}[label= $\circ$]
\item $a(X) \,\ge \,n-1$ and $c_2(X)\,=\,0\,\in\, H^4(X,\,\R)$, or
\item $X$ is either Moishezon or Fujiki of $\dim X \,\le\, 4$, and $X$ bears a rigid holomorphic geometric structure of affine type.
\end{enumerate}
Then there exists a finite \'etale cover $T\,\longrightarrow\,X$ where $T$ is a complex torus. 
\end{bigthm}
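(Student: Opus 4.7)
The plan is to reduce both parts of the statement to ruling out the appearance of any irreducible Calabi--Yau (ICY) or irreducible holomorphic symplectic (IHS) factor in the product decomposition provided by Theorem~\ref{thma}. In case (ii) this will be done by rigidity, which produces infinitesimal automorphisms that no compact non-torus factor can accommodate. In case (i) with $a(X) = n$ it will be done via Chern-class positivity, and the sub-case $a(X) = n-1$ will require a separate argument built on the algebraic reduction.

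\textbf{Case (ii).} First I would apply Theorem~\ref{thma} to obtain a finite étale cover $X' \simeq T \times \prod_{i \in I} Y_i \times \prod_{j \in J} Z_j$ and pull $\phi$ back to a rigid affine-type structure $\phi'$ on $X'$. By the Corollary stated just after Theorem~\ref{thma}, $\phi'$ is locally homogeneous on a Zariski open $U' \subset X'$ whose complement has codimension at least two. Since the IHS factors are simply connected and the ICY factors have finite fundamental group (inherited from Beauville's result via bimeromorphic invariance of $\pi_1$), the universal cover $\widetilde{X'}$ takes the form $\mathbb{C}^k \times \prod \widetilde{Y_i} \times \prod Z_j$ with compact simply connected factors. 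Gromov's theorem then ensures that on the simply connected $\widetilde{X'}$ the local Killing fields of the lifted structure extend to global holomorphic vector fields. Using the trivialization $TY_i \simeq \Omega_{Y_i}^{n_i - 1}$ coming from $K_{Y_i}$ trivial, combined with the ICY vanishing $H^0(\Omega_{Y_i}^{n_i-1}) = 0$, one finds $H^0(\widetilde{Y_i}, T\widetilde{Y_i}) = 0$; similarly $TZ_j \simeq \Omega_{Z_j}^1$ via the symplectic form and the IHS condition gives $H^0(TZ_j) = 0$. By Künneth every global holomorphic vector field on $\widetilde{X'}$ is tangent to the $\mathbb{C}^k$ factor, so local homogeneity at a generic point forces $k = \dim X'$, i.e.\ $I = J = \emptyset$ and $X' = T$.

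\textbf{Case (i).} If $a(X) = n$ then $X$ is Moishezon and Theorem~\ref{thma} produces a decomposition of a finite étale cover $X'$. The hypothesis $c_2(X) = 0$ pulls back to $c_2(X') = 0$ (étale pullback preserves Chern classes), and Künneth then forces each $c_2(Y_i)$ and $c_2(Z_j)$ to vanish in cohomology. But for a Kähler ICY or IHS variety $W$ of dimension $m \geq 2$ the Bogomolov-type inequality $\int_W c_2(W) \cdot \alpha^{m-2} > 0$ for a suitable movable class $\alpha$ excludes the existence of any such factor, yielding $X' = T$. If instead $a(X) = n-1$, Theorem~\ref{thma} is unavailable and one works with the algebraic reduction: after a bimeromorphic modification there is a holomorphic fibration $f: \widetilde X \to B$ with $B$ Moishezon of dimension $n-1$, and adjunction plus the triviality of $K_X$ force the general fiber to be an elliptic curve. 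A Chern-class analysis of this elliptic fibration, combined with $c_1(X) = c_2(X) = 0$, should yield $c_1(B) = c_2(B) = 0$, and by the previous sub-case $B$ is étale covered by a torus. One concludes by observing that a smooth elliptic fibration with trivial first Chern class over a torus is, up to finite étale cover, a principal torus bundle, hence itself an étale quotient of a torus.

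\textbf{Main obstacle.} The sub-case $a(X) = n-1$ of (i) is the main difficulty: there Theorem~\ref{thma} does not apply, and one must carefully control the singular fibers of the algebraic reduction to push the vanishing of $c_1$ and $c_2$ down to $B$, and then ensure that the torus covering of $B$ lifts to $X$ without torsion obstruction. A secondary subtlety in case (ii) is the passage from local Killing fields on $X'$ to global holomorphic vector fields on the universal cover, which requires combining Gromov's theorem with the Bochner principle established via Theorem~\ref{thma}.
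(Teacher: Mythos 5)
Your proposal contains two genuine gaps, one in each case.

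In case (ii), your argument passes to the universal cover of $X'$ and asserts that the ICY and IHS factors are simply connected (or have finite fundamental group), so that $\widetilde{X'}$ is $\mathbb{C}^k$ times a product of \emph{compact} factors. But this is precisely what is not known in the non-K\"ahler setting: the paper states explicitly, just after Definition~\ref{def ICYIHS}, that simple connectedness of Fujiki ICY/IHS manifolds is expected but unproven, and Corollary~\ref{fund group} only gives finiteness of linear representations of $\pi_1$, not of $\pi_1$ itself. (Bimeromorphic invariance of $\pi_1$ reduces you to the singular K\"ahler ICY/IHS model, where finiteness of the topological fundamental group is likewise open.) If some $\pi_1(Y_i)$ were infinite, its universal cover would be noncompact and the vanishing $H^0(\Omega^{n_i-1})=0$ --- which is a statement about \emph{finite} \'etale covers --- gives you nothing there. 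The paper's actual route avoids universal covers entirely: by Corollary~\ref{fund group} one passes to a further \emph{finite} \'etale cover $Y'$ on which every linear representation of $\pi_1(Y')$ is trivial, and then Remark~\ref{rep pi1} (the refinement of Nomizu--Gromov extension requiring only triviality of linear representations, not simple connectedness) extends local Killing fields globally on the compact $Y'$, where the contradiction $H^0(Y_i,T_{Y_i})\simeq H^0(Y_i,\Omega^{n_i-1}_{Y_i})=0$ applies. Your argument needs this replacement to close.

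In case (i), the sub-case $a(X)=n$ is essentially fine (the paper argues slightly differently, via Bogomolov--Gieseker for the stable summands $F_i$ pulled back to a projective model, plus Simpson's correspondence and \cite{DPS94}, rather than via positivity of $c_2$ of each factor, but both are workable). The sub-case $a(X)=n-1$, however, is exactly where your proposal stops being a proof: the elliptic-fibration strategy you sketch is not carried out, and it faces the obstacles you yourself flag --- the algebraic reduction is only meromorphic, $c_2$ is not controlled under the modifications needed to make it holomorphic, the base $B$ is only defined up to bimeromorphism so ``$c_2(B)=0$'' is not even well posed, and singular fibers contribute to $c_2$ in ways that are hard to control in dimension $>2$. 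The paper's proof uses an entirely different key input: Lin's theorem \cite{Lin} that a Fujiki manifold with $a(X)=\dim X-1$ admits an algebraic approximation. One first splits off the torus factor via the \'etale triviality of the Albanese map (Theorem~\ref{splittorus1}), then deforms to a nearby Moishezon fiber $X_{t_0}$ (which still has $c_1=c_2=0$), applies the Moishezon case to $X_{t_0}$, transports the resulting torus cover back along the $C^\infty$-trivial family, and concludes by comparing $b_1$ and $h^{1,0}$. This deformation-theoretic reduction is the missing idea; without it (or a complete execution of your fibration analysis) the sub-case $a(X)=n-1$ remains unproved.
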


Let us comment on each case individually. 

$\bullet$ If $X$ is K\"ahler, then this uniformization result is a classical consequence of Yau's solution of the Calabi conjecture 
\cite{Ya}. This problem has also recently attracted a lot of attention in the singular projective or K\"ahler setting (see \cite{GKP16} 
or \cite{CGG}).

In the case where $X$ is Moishezon, then the conclusion of Theorem~\ref{thmb} follows from the polystability of $TX$ proved in Theorem~\ref{polystability} coupled with the results of Demailly-Peternell-Schneider \cite{DPS94} on hermitian flat manifolds
(see Corollary~\ref{c2 zero}). It should be mentioned again that Theorem~\ref{polystability} is deeply connected to the recent progress on our understanding of singular K\"ahler varieties with zero first Chern class \cite{HP2, BGL}. \\

$\bullet$ The case where $a(X)\,=\,n-1$ is treated in Corollary~\ref{c_2=0, Fujiki}. An important ingredient of the proof is a result of 
Lin \cite{Lin} which shows that $X$ can be deformed to a Moishezon manifold. In order to reduce our situation to the Moishezon case 
previously established, we need to prove that the Albanese map of $X$ is \'etale trivial, which we show to hold for any Fujiki manifold 
with trivial first Chern class (see Theorem~\ref{splittorus1}).

It should be mentioned that while compact K\"ahler manifolds with numerically trivial canonical bundle are known to admit algebraic 
approximation \cite{Ca}, it is not known whether all Fujiki class $\mathcal C$ manifolds with numerically trivial canonical bundle 
admit algebraic approximations.\\

$\bullet$ Finally, the case where $X$ is Moishezon or Fujiki of dimension at most four is showed in Theorem~\ref{moishezon}, and partially generalizes the second item in Theorem~\ref{thm sorin}. The 
assumptions are used in order to get a decomposition theorem for $X$ (see Theorem~\ref{polystability}). A crucial observation is that 
given a rigid structure on a manifold $Y$ --- which need not be simply connected --- one can globally extend local Killing vector fields as 
long as any linear representation $\pi_1(Y)\,\longrightarrow\, \mathrm{GL}(N,\mathbb C)$ is trivial (see Remark~\ref{rep pi1}). This 
observation allows us to apply the finiteness result established in \cite{GGK} for linear representations of the fundamental group of 
minimal models with vanishing augmented irregularity, see Corollary~\ref{fund group}.

\subsection{Rigid holomorphic structures and fundamental groups}

Our last result shows that in many instances, compact complex manifolds with trivial canonical bundle bearing a rigid holomorphic 
geometric structure have infinite fundamental group. The following result is a combination of Corollary~\ref{thmFujiki} and 
Corollary~\ref{thm part cases}.

\begin{bigthm}\label{main}
Let $X$ be a compact complex manifold $X$ with trivial canonical bundle $K_X$ bearing a rigid holomorphic geometric structure
of affine type. Assume that $X$ satisfies one of the following assumptions:
\begin{enumerate}[label=$\circ$]
\item $X$ is a Fujiki manifold, or

\item $\dim X\,=\, 3$, or

\item the algebraic dimension of $X$ is at most one.
\end{enumerate}
Then the fundamental group of $X$ is infinite.
\end{bigthm}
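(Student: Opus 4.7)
The proof proceeds by contradiction. Assume $\pi_1(X)$ is finite and let $\pi\colon \widetilde X\to X$ denote the (finite) universal cover. Then $\widetilde X$ is a compact simply connected complex manifold with trivial canonical bundle, inheriting a rigid holomorphic geometric structure $\widetilde\phi$ of affine type. Each of the three hypotheses (Fujiki class, $\dim X=3$, $a(X)\le 1$) is preserved under the finite \'etale cover $\pi$, so $\widetilde X$ satisfies the same assumption as $X$. Since $\pi_1(\widetilde X)$ is trivial, every linear representation $\pi_1(\widetilde X)\to \mathrm{GL}(N,\mathbb{C})$ is trivial and Remark~\ref{rep pi1} implies that every local Killing vector field of $\widetilde\phi$ extends to a global holomorphic vector field on $\widetilde X$. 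The plan is then to show, in each case, that this extension combined with triviality of $K_{\widetilde X}$ forces $\widetilde X$ to be essentially a complex torus, contradicting its simple connectedness.

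For the Fujiki case I would appeal to Theorem~\ref{splittorus1}: the Albanese map of $\widetilde X$ is \'etale trivial, and simple connectedness forces $\mathrm{Alb}(\widetilde X)$ to be a point, so $h^{1,0}(\widetilde X)=0$. Up to dimension four, Theorem~\ref{thma} then decomposes a further finite cover of $\widetilde X$ as $\prod Y_i\times\prod Z_j$ with no torus factor (killed by $h^{1,0}=0$); each $Y_i$ (ICY) and $Z_j$ (IHS) satisfies $H^0(TY_i)=H^0(TZ_j)=0$ via $TY_i\cong\Omega^{n-1}_{Y_i}$ and $TZ_j\cong\Omega^1_{Z_j}$ together with the Hodge vanishings $h^{n-1,0}(Y_i)=0$, $h^{1,0}(Z_j)=0$, whereupon the argument used in item (ii) of Theorem~\ref{thm sorin} rules out the existence of a rigid geometric structure of affine type on such a product, a contradiction. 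In higher dimensional Fujiki manifolds the argument must proceed more directly via parallelizability: $n$ global Killing fields spanning $T\widetilde X$ on an open dense subset give a top wedge in $H^0(\widetilde X,K_{\widetilde X}^{-1})\cong\mathbb{C}$ which is a nowhere-zero constant, so $T\widetilde X$ is trivial; Wang's theorem then identifies $\widetilde X$ with a quotient of a complex Lie group, and $K$-triviality forces a complex torus quotient, contradicting simple connectedness of $\widetilde X$.

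For the threefold case I would stratify by $a(X)\in\{0,1,2,3\}$. The subcases $a(X)\le 1$ fall under the third hypothesis, and the subcase $a(X)=3$ makes $X$ Moishezon so Theorem~\ref{thmb} applies and gives that $X$ is \'etale covered by a torus. The remaining subcase $a(X)=2$ is treated via the algebraic reduction $X\dashrightarrow S$ onto a projective surface: by triviality of $K_X$ and adjunction the generic fiber is an elliptic curve, yielding an elliptic fibration whose monodromy contributes an infinite subgroup to $\pi_1(X)$. For the third hypothesis $a(X)\le 1$ in arbitrary dimension, the algebraic reduction $X\dashrightarrow B$ has $\dim B\le 1$ and, combined with $K_X$ trivial, forces the generic fiber $F$ to be a positive-dimensional compact complex manifold with $K_F$ trivial and $a(F)=0$; the topology of such an $F$ (via its Albanese or classical classification of non-algebraic $K$-trivial manifolds) contributes infinitely many elements to $\pi_1(X)$.

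The main obstacle is the Fujiki case in dimension at least five, where Theorem~\ref{thma} is not available; the plan outlined above relies on combining Theorem~\ref{splittorus1} with a Borel--Remmert--Wang-type structure theorem for compact almost-homogeneous manifolds with trivial canonical bundle, together with a delicate use of Gromov's open-dense theorem to produce enough global Killing vector fields to trigger the parallelizability argument. A secondary difficulty lies in the subcase $a(X)=2$ for threefolds, where the algebraic reduction may have singular base and multiple fibers, requiring careful handling of the monodromy of the resulting elliptic fibration; and in the $a(X)\le 1$ case, where classification of $K$-trivial manifolds of small algebraic dimension is itself delicate and must be cross-referenced with the structural restrictions imposed by the rigid geometric structure.
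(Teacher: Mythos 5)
Your reduction to the simply connected universal cover and the use of Remark~\ref{rep pi1} to globalize local Killing fields match the paper, and your treatment of the Moishezon/low-dimensional Fujiki case via Theorem~\ref{splittorus1} and the decomposition is essentially the paper's Theorem~\ref{moishezon}. But the ``main obstacle'' you identify in the Fujiki case in dimension at least five is not actually there: a Fujiki manifold is either Moishezon, in which case Theorem~\ref{thma} (equivalently Theorem~\ref{polystability}) holds in \emph{every} dimension, or it has non-maximal algebraic dimension, and then the paper simply quotes Theorem~4.2 of \cite{BD2}, which already gives infinite fundamental group. Your proposed substitute --- producing $n$ Killing fields spanning $T\widetilde X$ on a dense open set directly from Gromov's open-dense theorem --- does not work: that theorem does not give local homogeneity on a dense open set, which is exactly what the Bochner principle (hence the decomposition theorem) is needed to supply.

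The more serious gaps are in the non-Fujiki cases, where your arguments never invoke the rigid geometric structure and are therefore refutable. For $a(X)\le 1$ you claim that a positive-dimensional fiber $F$ of the algebraic reduction with $K_F$ trivial and $a(F)=0$ ``contributes infinitely many elements to $\pi_1(X)$''; a generic K3 surface has $a=0$, trivial canonical bundle and trivial fundamental group, so this is false (and even when $\pi_1(F)$ is infinite, $\pi_1(F)\to\pi_1(X)$ need not inject). Likewise, for threefolds with $a(X)=2$ the monodromy of an elliptic fibration need not force infinite $\pi_1$ --- elliptic K3 surfaces are simply connected. The paper's actual mechanism is Theorem~\ref{main lemma}: on the simply connected cover, the maximal abelian subgroup $A\subset\mathrm{Aut}(X,\phi)$, whose orbits contain the fibers of the algebraic reduction by Theorem~\ref{act alg red}, has compact orbits of constant dimension, yielding a holomorphic \emph{submersion} $\pi\colon X\to B$ with torus fibers onto a simply connected Moishezon base with $K_B$ globally generated and with $\pi$ \emph{not} isotrivial. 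The contradiction is then that $\dim B=1$ forces $B={\mathbb C}{\mathbb P}^1$, whose canonical bundle is not globally generated, respectively that an elliptic fibration over a simply connected compact base has constant period map and hence is isotrivial. None of this machinery appears in your proposal, and without it the second and third cases do not close.
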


Theorem~\ref{main} also holds for the holomorphic projective connections, and also for the holomorphic conformal structures, even 
though these two geometric structures are not of affine type (see Definition \ref{def-a}). This is because on manifolds with trivial 
canonical bundle, these two geometric structures lift to global representatives which are of affine type, namely, a holomorphic affine 
connection and a holomorphic Riemannian metric respectively (see Proposition \ref{proj con}). It may be mentioned that the particular 
case of holomorphic Riemannian metrics was settled earlier in \cite{BD3}; however, the proof in \cite{BD3} is of very specific nature 
and it works only in that particular context.

On the other hand, Theorem \ref{main} does not hold in general for non-affine geometric structures. Indeed, 
according to Definition \ref{def-a}, holomorphic embeddings of a compact complex manifold in complex projective 
space ${\mathbb C}{\mathbb P}^N$ are rigid holomorphic geometric structures (of order $0$). So simply connected complex projective 
manifolds admit rigid holomorphic geometric structures (of order $0$) of non-affine type.

To give a more illuminating example of a rigid geometric structure of non-affine type which is not locally homogeneous, recall that 
a compact complex torus $T^n\,=\,\mathbb C^n / \Lambda$ (here $\Lambda$ is a cocompact lattice in
$\mathbb C^n$), which admits a nontrivial holomorphic map to an elliptic curve, can be endowed with a holomorphic foliation that 
is not translation invariant \cite{Gh}. This combined with the standard holomorphic parallelization of the holomorphic tangent bundle of $T^n$
produces a holomorphic {\it rigid} geometric structure of non-affine type in the sense of Gromov (see Definition \ref{def-a} and Definition 
\ref{d-rigid}) which is not locally homogeneous.

Also rational homogeneous manifolds $X$ are simply connected and admit holomorphic rigid geometric structures of affine 
type. Indeed, a basis of ${\rm H}^0(X,\, TX)$ defines such a structure on 
$X$ (see Definition \ref{d-rigid} and Remark \ref{rational}). In this case $TX$ is globally generated and $K_X$ is not trivial.

The strategy of the proof of Theorem \ref{main} is the following. 

$\bullet$ In the Fujiki case (Corollary~\ref{thmFujiki}), this is a consequence of Theorem~\ref{thma} (see the second remark below the statement of the theorem) since the case of non-maximal algebraic dimension had been treated before in \cite{BD2}. As an interesting consequence, we 
establish Corollary \ref{Kill rigid} asserting that on a compact complex manifold with trivial canonical bundle, any holomorphic rigid 
geometric structure of affine type $\phi$ admits non-zero locally Killing vector fields. Such vector fields can be chosen to be global if 
$X$ is simply connected.

$\bullet$ To address the non-Fujiki case, the key result that we prove is Theorem~\ref{main lemma}, which asserts that the 
automorphism group of $(X, \,\phi)$ contains a maximal abelian Lie subgroup $A$ whose orbits in $X$ are closed and coincide with the 
fibers of a holomorphic submersion $\pi \,: \,X \,\longrightarrow\, B$ over a compact simply connected Moishezon manifold with globally 
generated canonical bundle $K_B$. Moreover, the fibers of $\pi$ are compact complex tori and the family $\pi$ is \textit{not} 
isotrivial (or equivalently, $K_B$ is not holomorphically trivial and $A$ is non-compact).

The fibrations constructed in Theorem~\ref{main lemma} cannot exist if $X$ is a compact K\"ahler Calabi--Yau manifold and we conjecture 
that they should neither exist in the broader context when $M$ is a compact simply connected complex manifold with trivial canonical 
bundle (see Remark~\ref{end conj}).

\section{Holomorphic rigid geometric structures}\label{section: geometric structures}

\subsection{Definitions and examples}

In this section we recall the context, definitions as well as some basics about
the {\it rigid geometric structures} in the sense of Gromov \cite{DG, Gr}.

To fix notation, consider a complex manifold $X$ of (complex) dimension $n$. Given any 
integer $r \,\geq \,0$, associate to it the principal bundle of $r$--frames $R^r(X)
\,\longrightarrow\, X$, which is the bundle of $r$--jets of local holomorphic coordinates on 
$X$ (i.e., $r$--jets of {\it local} biholomorphisms from $\mathbb C^n$ to $X$). It is a 
holomorphic principal bundle over $X$ with structure group $D^r({\mathbb C}^n)$ (or simply 
$D^r$) which is the group of $r$--jets, at origin, of local biholomorphisms of $\mathbb{C}^n$ 
fixing the origin. Notice that $D^r$ is a complex affine algebraic group. Let us now recall 
a basic definition from \cite{DG, Gr}.

\begin{definition}\label{def-a}
A {\it holomorphic geometric structure} (of order $r$)
on the complex manifold $X$ is a holomorphic $D^r$--equivariant map $\phi \,:\, R^r(X)\,
\longrightarrow\, Z$, with $Z$ being a complex algebraic variety endowed with an
algebraic action of the above group $D^r$. The geometric structure $\phi$ is said
of \textit{affine type} if the complex variety $Z$ is actually affine.
\end{definition}

To give examples of holomorphic geometric structures, holomorphic maps from $X$ 
to a complex algebraic variety $Z$ are evidently holomorphic structures of order $0$; they are of 
affine type if $Z$ is affine. Holomorphic tensors on $X$ are holomorphic geometric 
structures on $X$ of affine type of order one. Holomorphic affine connections on the 
holomorphic tangent bundle $TX$ are holomorphic geometric structures of affine type on $X$ 
of order two \cite{Gr,DG}. Holomorphic fields of planes, holomorphic flags, holomorphic 
foliations, holomorphic projective connections and holomorphic conformal structures are 
all holomorphic geometric structures of non-affine type.

A local biholomorphism $f \,:\, U \,\longrightarrow\, V$ between two open subsets $U$ and 
$V$ of $X$ is called a {\it local automorphism} (or {\it local isometry}) with respect to
a holomorphic geometric structure $\phi$ on $X$ of order $r$
if its natural lift to a map between the corresponding frame bundles
$$f^{(r)}\, :\, R^r(U)\,\longrightarrow\, R^r(V)$$ 
takes each fiber of $\phi$ to a fiber of $\phi$. This is the natural notion of local symmetry which 
coincides with the usual one in each of the examples of geometric structures.

The natural notion of a linearized symmetry is the following.

A (local) holomorphic vector field defined on an open subset $U \,\subset\, X$ is called a 
{\it Killing vector field}, with respect to $\phi$, if its local flow acts on $U$
through local automorphisms (or local isometries).

The group ${\rm Aut}(X,\, \phi)$ of all global automorphisms (isometries) of $(X,\, \phi)$ 
is a complex Lie subgroup of the group ${\rm Aut}(X)$ of biholomorphisms of $X$. Its 
unique maximal connected subgroup ${\rm Aut}_0(X,\, \phi)$ is a complex Lie subgroup of 
the unique maximal connected subgroup ${\rm Aut}_0(X)$ of the group of biholomorphisms of 
$X$. The Lie algebra of ${\rm Aut}_0(X,\, \phi)$ is the finite dimensional Lie algebra
consisting of all globally defined holomorphic Killing vector fields with respect to $\phi$.

Let $s$ be a nonnegative integer. The $s$--jet of the geometric structure $\phi$ of
order $r$ is the
geometric structure of order $(r+s)$ on $X$ defined by the map
\begin{equation}\label{ps}
\phi^{(s)}\,\,:\,\, R^{r+s}(X)\,\,\longrightarrow\,\, Z^{(s)}
\end{equation}
given by $\phi$, where 
$Z^{(s)}$ is the variety of $s$--jets of holomorphic maps from ${\mathbb C}^n$ to $Z$.
We note that $Z^{(s)}$ is naturally endowed with an algebraic action of $D^{r+s}$ by
pre-composition \cite{Ben, DG, Gr}; recall that
$R^{r+s}(X)$ is a principal $D^{r+s}$--bundle over $X$. For these actions of $D^{r+s}$
on $Z^{(s)}$ and $R^{r+s}(X)$, the above map $\phi^{(s)}$ is $D^{r+s}$--equivariant.

The $(r+s)$--jet of a local biholomorphism of $X$ is called
an {\it isometric jet} of order $s$ (with 
respect to $\phi$) if its canonical lift to $R^{r+s}(X)$ takes any fiber of the
map $\phi^{(s)}$ in \eqref{ps} to some fiber of $\phi^{(s)}$. This is 
the natural definition of an isometry of order $s$ \cite{Ben, DG, Gr}.

\begin{definition}\label{d-rigid}
A holomorphic geometric structure $\phi$ is called {\it rigid} of order $l$ (in the sense of
Gromov) if any $(l+1)$--isometric jet of $\phi$ is uniquely determined by its underlying
$l$--jet. In other words, the forgetful map from $(l+1)$--isometric jets $l$ jets is injective.
\end{definition}

Holomorphic affine connections are rigid of order one in the sense of
Gromov (see \cite{Ben,DG,Gr}). This is because any local 
biholomorphism, that fixes a point and preserves a connection, is actually
linearized in local exponential coordinates around the fixed point, which means that
such local biholomorphisms are completely determined just by their
differential at the fixed point.

Holomorphic Riemannian metrics are rigid holomorphic geometric structures of order one. 
Holomorphic conformal structures for dimension at least three, as well as holomorphic 
projective connections, are rigid holomorphic geometric structures of order two. The 
holomorphic symplectic structures, and the holomorphic foliations, are examples of non-rigid 
geometric structures~\cite{Ben,DG, Gr}.

The orbits, in $X$, of local isometries of $\phi$ are locally closed, and moreover the 
holomorphic tangent space to a given orbit space at any point of the orbit is spanned by 
the local holomorphic Killing vector fields \cite{Ben, DG,Gr}. The sheaf of local Killing 
vector fields of a rigid holomorphic geometric structure is locally constant \cite{No,Am, DG,Gr} (see below a
direct consequence stated as Theorem \ref{extend}). Its stalk at 
any point is a finite dimensional Lie algebra which is known as the {\it Killing algebra} 
of $\phi$.

\subsection{Two important results}

In this section, we would like to recall two results about holomorphic geometric structures that we will use extensively in this 
article.

To state the next result, recall that a complex manifold $X$ satisfies the Bochner principle if any holomorphic tensor field on $X$ 
vanishing at one point is vanishing identically. For instance, a compact K\"ahler manifold $X$ with $c_1(X)\,=\,0$ satisfies the Bochner 
principle, but other classes of examples exist. The following result was proved by the third-named author in \cite[Lemma~3.2]{D2}:

\begin{lemma}[{\cite{D2}}]
\label{loc hom}
Let $X$ be a complex manifold satisfying the Bochner principle. Then any holomorphic geometric structure of
algebraic affine type on $X$ is locally homogeneous. 
\end{lemma}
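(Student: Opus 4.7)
The plan is to reinterpret $\phi$ as a collection of holomorphic tensor fields on $X$ and then apply the Bochner principle to force the differential invariants that detect non-homogeneity to vanish identically.

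First, I would reduce to a tensorial picture. Since $Z$ is an affine algebraic variety endowed with an algebraic action of the linear algebraic group $D^r$, its coordinate ring $\mathbb{C}[Z]$ is a locally finite $D^r$-module; picking a finite set of generators and the $D^r$-invariant subspace they span yields a $D^r$-equivariant closed embedding $\iota \colon Z \hookrightarrow V$ into some finite-dimensional rational $D^r$-representation $V$. Composing $\phi$ with $\iota$ exhibits $\phi$ as a holomorphic section of the associated bundle $E := R^r(X) \times_{D^r} V$. As $E$ is associated to the frame bundle via a representation of $D^r$, its sections are holomorphic tensor fields in the generalized sense, and the Bochner principle applies to them. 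Applying the same recipe to the jet prolongations $\phi^{(s)} \colon R^{r+s}(X) \to Z^{(s)}$, which are again of affine type since $Z^{(s)}$ carries an algebraic $D^{r+s}$-action on an affine variety, encodes every finite-order jet of $\phi$ as a holomorphic tensor section on $X$.

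Next, I would invoke Gromov's stratification theorem for local automorphisms. It furnishes a non-empty Zariski-open subset $U \subset X$ on which the pseudogroup of local automorphisms of $\phi$ has orbits of constant maximal dimension $d$; moreover, this dimension $d$ and the locus $U$ are detected by the rank of an equivariant map built from $\phi^{(s)}$ for $s$ sufficiently large. The defect $n - d$, where $n = \dim X$, is then witnessed by the vanishing on $U$ of certain holomorphic tensor sections on $X$ constructed from $\phi$ and its jets.

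Finally, the Bochner principle closes the argument: those obstruction tensors vanish on the non-empty open set $U$, so by the Bochner principle they vanish identically on $X$. Consequently the maximal orbit dimension $d$ equals $n$ everywhere, the pseudogroup of local automorphisms acts transitively near every point of $X$, and $\phi$ is locally homogeneous. The main obstacle in executing this plan is the precise identification of Gromov's orbit-dimension defect with a genuine holomorphic tensor on $X$: this is where the algebraic affine hypothesis on $Z$ is essential, since the construction of such obstruction tensors as $D^{r+s}$-invariant polynomial expressions in $\phi^{(s)}$ rests on $Z$ being affine.
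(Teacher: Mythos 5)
The paper does not actually prove this lemma --- it quotes it from \cite{D2} --- so your proposal has to be measured against the argument of [D2, Lemma~3.2]. Your opening move (a $D^r$-equivariant closed embedding $Z\hookrightarrow V$ into a finite-dimensional rational representation, so that $\phi$ and its prolongations $\phi^{(s)}$ become sections of bundles associated to the frame bundles) is indeed the first step of that proof, and your closing remark correctly locates where affineness of $Z$ enters. The genuine gap is in the final step, and you half-admit it yourself. As written, the logic is inconsistent: if certain tensors ``witness the defect $n-d$'' by vanishing on the maximal stratum $U$, then their identical vanishing (forced by the Bochner principle) would only show that the defect is the \emph{same} everywhere, i.e.\ that the orbit dimension is constantly equal to $d$; it gives no reason whatsoever that $d=n$. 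The assertion that the generic orbit of the local-isometry pseudogroup is \emph{open} is precisely the content of the lemma and cannot come from Gromov's stratification alone: the structure of Remark~\ref{rational} on $\mathbb{CP}^1$ has generic orbits of dimension zero. What actually forces transitivity in \cite{D2} is an orbit-separation argument on the target: one shows that for every $s$ the image $\phi^{(s)}(R^{r+s}(X))$ is contained in a \emph{single} $D^{r+s}$-orbit of $Z^{(s)}$ --- using that scalar invariants $P\circ\phi^{(s)}$ are holomorphic functions on $X$, hence constant under the Bochner hypothesis, and that tensor-valued invariants of the jets are either nowhere zero or identically zero --- and then invokes the fact that formal isomorphism of $(X,\phi)$ at $x$ and $y$ to every order yields an actual local isometry taking $x$ to $y$ (Gromov's integrability/approximation theorem). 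Nothing in your write-up performs this separation step, and it is the heart of the proof.

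Two further points need repair. First, for $r+s\ge 2$ a section of $R^{r+s}(X)\times_{D^{r+s}}V$ is \emph{not} a holomorphic tensor in the sense in which the paper states the Bochner principle: $D^{r+s}$ is an extension of $\mathrm{GL}(n,\mathbb{C})$ by a unipotent group, so such associated bundles are only iterated extensions of tensor bundles. To apply the hypothesis one must filter $V$ by $D^{r+s}$-submodules with tensorial graded pieces and run an induction on the leading term; this is routine but cannot be waved through with ``tensor fields in the generalized sense''. Second, your appeal to a rank computation ``built from $\phi^{(s)}$ for $s$ sufficiently large'' tacitly assumes a finite-order stabilization, i.e.\ rigidity of $\phi$, which is not a hypothesis of the lemma (it must cover non-rigid structures such as symplectic forms); the correct argument treats all orders $s$ simultaneously, as above.
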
	

The next result is an extendibility type result for local Killing fields relative to a rigid geometric structure on a simply connected 
manifold. It was first proved by Nomizu, \cite{No}, in the case Killing vector fields for real analytic Riemannian metrics and then 
extended to any rigid geometric structure by \cite{Am, DG,Gr}.

\begin{theorem}[{\cite{No,Am, DG,Gr}}]\label{extend}
Let $X$ be a complex manifold bearing a rigid holomorphic geometric structure $\phi$ and let $\xi$ be a local Killing field for $\phi$. 
If $X$ is simply connected, then $\xi$ extends to a global holomorphic vector field on $X$.
\end{theorem}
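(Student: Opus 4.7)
The plan is to reduce the theorem to the monodromy principle for locally constant sheaves, leveraging the structural fact stated in the paragraph immediately preceding the theorem: the sheaf $\mathcal{K}$ of germs of holomorphic Killing vector fields of a rigid holomorphic geometric structure is a locally constant sheaf on $X$ with finite-dimensional complex Lie algebra stalks (the Killing algebras of $\phi$). Given this input, simple connectedness will force any local section of $\mathcal{K}$ to extend to a global one, which by definition is a global holomorphic Killing vector field.

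To construct the extension, I would first observe that the given local Killing field $\xi$, defined on some connected open set $U \subset X$, determines a section $\xi \in \mathcal{K}(U)$. Fix a base point $x_0 \in U$, and let $\xi_{x_0} \in \mathcal{K}_{x_0}$ denote its germ at $x_0$. For any point $y \in X$, one chooses a continuous path $\gamma : [0,1] \to X$ from $x_0$ to $y$. By compactness of the image of $\gamma$ and local constancy of $\mathcal{K}$, one may cover this image by finitely many connected open sets $U = V_0, V_1, \ldots, V_N \ni y$ on each of which $\mathcal{K}$ is isomorphic to a constant sheaf, and such that $V_i \cap V_{i+1}$ is connected and nonempty. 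One then restricts $\xi$ to $V_0 \cap V_1$, uniquely extends it to a section on $V_1$ using triviality of $\mathcal{K}$ there, and continues inductively to obtain a germ $\xi_y \in \mathcal{K}_y$.

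It remains to check that $\xi_y$ does not depend on the choice of path $\gamma$, after which the germs $\xi_y$ glue into a global section $\tilde{\xi} \in \mathcal{K}(X)$ extending $\xi$. Path-independence follows from the simple connectedness hypothesis: any two paths from $x_0$ to $y$ are homotopic rel endpoints, and the continuation of a section of a locally constant sheaf is invariant under small perturbations of the path, hence under homotopies. Equivalently, a locally constant sheaf on a simply connected space is isomorphic to a constant sheaf, so the restriction map $\mathcal{K}(X) \to \mathcal{K}_{x_0}$ is bijective, and $\tilde{\xi}$ is characterized as the unique global section with germ $\xi_{x_0}$ at $x_0$.

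The hard part — which is granted in the excerpt rather than proved — is the local constancy of $\mathcal{K}$ itself, due to \cite{No,Am,DG,Gr}. Establishing it would require a Frobenius-type argument on the jet bundle $R^{l+1}(X)$: rigidity of order $l$ means that an $(l+1)$-isometric jet is determined by its $l$-part, and the infinitesimal analog of this condition produces a holomorphic distribution on (a subset of) $R^{l+1}(X)$ whose integral manifolds parameterize the prolongations of local Killing fields. Finite-dimensionality of these integral leaves yields the finite-dimensional stalks, and flatness of the resulting transport yields the local constancy. Once this substantial input is admitted, the monodromy argument above makes the global extension statement essentially immediate.
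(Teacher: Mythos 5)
The paper does not actually prove this statement---it is quoted verbatim from \cite{No,Am,DG,Gr}---so there is no in-paper argument to compare against; your monodromy argument is the standard proof from those references and is correct as stated, granted the local constancy of the sheaf of local Killing fields, which the paper itself asserts without proof in the paragraph preceding the theorem. You rightly isolate that local constancy (established via prolongation and a Frobenius-type integrability argument on the jet bundles $R^{r+s}(X)$, which is where rigidity is genuinely used) as the substantial input; once it is granted, your continuation-along-paths and homotopy-invariance argument, equivalently the fact that a locally constant sheaf on a simply connected space is constant, completes the extension, and it also makes transparent the refinement recorded in Remark~\ref{rep pi1} that only the triviality of the monodromy representation of $\pi_1(X)$ on the finite-dimensional stalk is needed.
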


\begin{remark}
\label{rep pi1}
The arguments in the proof of Theorem~\ref{extend} actually show that one can replace the assumption that $\pi_1(X)$ is trivial by the weaker property that any complex linear representation $\pi_1(X)\,\longrightarrow\, \mathrm{GL}(n,\mathbb C)$ is trivial, where $n=\dim X$. 
\end{remark}

\subsection{Orbits of isometries and algebraic reduction} 

Recall that the {\it algebraic dimension} $a(X)$ of a compact complex manifold $X$ is the 
degree of transcendence over $\mathbb C$ of the field of meromorphic functions ${\mathcal 
M}(X)$ on $X$. It is known that $a(X) \,\in\, \{0,\,\ldots,\, \dim X\}$ \cite{Ue}. By 
definition, two bimeromorphic compact complex manifolds have the same algebraic dimension.

Compact complex manifolds $X$ with maximal algebraic dimension ($a(X)\,=\,\dim X$) are 
called Moishezon manifolds \cite[p.~26, Definition~3.5]{Ue}. It is the class of manifolds 
for which the meromorphic functions separate points in general position. Moishezon studied 
them in \cite{Mo} and proved that each of them is birational to some smooth complex 
projective manifold \cite{Mo}, \cite[p.~26, Theorem~3.6]{Ue}.

More generally, a compact complex manifold is said to be in {\it Fujiki class 
$\mathcal C$} (or a {\it Fujiki manifold} for short) if it is the meromorphic image of a compact K\"ahler manifold.
A basic result of Varouchas says
that a compact complex manifold belongs to Fujiki class $\mathcal C$ if and
only if it is bimeromorphic to a compact K{\"a}hler manifold \cite[Section\,IV.3]{Va}.
Manifolds lying in Fujiki class $\mathcal C$ share some of the
features of compact K\"ahler manifolds, e.g. the existence of a Hodge decomposition.

The following classical
result is known as the {\it algebraic reduction} theorem.

\begin{theorem}[{\cite[p.~25, Definition~3.3]{Ue}, \cite[p.~26, Proposition~3.4]{Ue}}]
\label{thue}
Let $X$ be a compact connected complex manifold of dimension $n$ and algebraic dimension
$a(X)\,=\,n-d$. Then there exists a bi-meromorphic modification $$\Psi \,:\, \widetilde{X}\,
\longrightarrow\, X$$ 
with $\widetilde X$ smooth and a holomorphic surjective map
$$t \,:\, \widetilde{X}\,\longrightarrow\, V\, ,$$ with connected fibers,
where $V$ is a $(n-d)$--dimensional projective manifold, such
that
\begin{equation}\label{h1}
t^* ({\mathcal M}(V))\,=\, \Psi^*({\mathcal M} (X))
\end{equation}
as subspaces of ${\mathcal M}(\widetilde{X})$.
\end{theorem}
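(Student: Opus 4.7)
The plan is to construct $V$ as a projective compactification of the image of a meromorphic map defined by generators of the function field $\mathcal M(X)$, and then to use Hironaka's desingularization combined with the Stein factorization in order to achieve the smoothness and the connected fibers asserted in the theorem.

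The first essential ingredient is the classical theorem of Siegel-Remmert-Thimm, which states that for any compact complex manifold $X$, the field $\mathcal M(X)$ is finitely generated over $\mathbb C$. Under the assumption $a(X) = n-d$, this provides meromorphic functions $f_1,\ldots,f_N \in \mathcal M(X)$ with $\mathcal M(X) = \mathbb C(f_1,\ldots,f_N)$ and transcendence degree $n-d$. These functions together define a meromorphic map
\[
\varphi \colon X \dashrightarrow \mathbb{CP}^N, \qquad x \longmapsto [1 : f_1(x) : \cdots : f_N(x)],
\]
which is holomorphic outside a proper analytic subset of $X$.

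Let $V_0 \subset \mathbb{CP}^N$ be the closure of the image of $\varphi$. By Remmert's proper mapping theorem combined with Chow's theorem, $V_0$ is an irreducible projective algebraic variety; since $\mathcal M(V_0) \supseteq \mathbb C(f_1,\ldots,f_N)$ has transcendence degree $n-d$ over $\mathbb C$, one gets $\dim V_0 = n-d$. Next I would let $\Gamma \subset X \times \mathbb{CP}^N$ be the closure of the graph of $\varphi$; the first projection $\Gamma \to X$ is a bimeromorphic modification, and desingularizing $\Gamma$ via Hironaka produces a compact complex manifold $\widetilde{X}_0$ with a bimeromorphic map $\Psi_0 \colon \widetilde{X}_0 \to X$ and a holomorphic map $\widetilde\varphi \colon \widetilde{X}_0 \to V_0$. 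Applying Stein factorization $\widetilde{X}_0 \to V'_0 \to V_0$, then desingularizing $V'_0$ and pulling back $\widetilde{X}_0$ as needed, yields the required pair $\Psi \colon \widetilde X \to X$ and $t \colon \widetilde X \to V$, with $V$ a smooth projective manifold of dimension $n-d$ and $t$ surjective with connected fibers.

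It remains to verify \eqref{h1}. The inclusion $t^*(\mathcal M(V)) \subseteq \Psi^*(\mathcal M(X))$ is automatic: since $\Psi$ is bimeromorphic, $\Psi^* \colon \mathcal M(X) \to \mathcal M(\widetilde X)$ is an isomorphism, hence every meromorphic function on $\widetilde X$ already lies in $\Psi^*(\mathcal M(X))$. For the reverse inclusion, by construction each generator $\Psi^* f_i$ is the pullback by $t$ of an affine coordinate on $V_0 \subset \mathbb{CP}^N$, so it lies in $t^*(\mathcal M(V))$; since the $\Psi^* f_i$ generate $\Psi^*(\mathcal M(X))$ as a field over $\mathbb C$, equality follows. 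The main substantive obstacle is the Siegel-Remmert-Thimm finite generation theorem itself, which is a deep classical result; a secondary hurdle is to execute the desingularization and Stein factorization steps coherently so that the final $\Psi$ and $t$ retain the function-field identity and so that $V$ is globally smooth projective rather than merely bimeromorphic to one.
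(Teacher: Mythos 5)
This theorem is not proved in the paper at all: it is quoted as a classical result (the algebraic reduction) with a citation to Ueno's book, so there is no internal proof to compare against. Your argument is precisely the standard one (essentially Ueno's own): finite generation of $\mathcal M(X)$ by Siegel--Remmert--Thimm, the induced meromorphic map to $\mathbb{CP}^N$, Chow plus Remmert to get a projective image of dimension $n-d$, then Hironaka and Stein factorization to arrange smoothness and connected fibers, with the function-field identity checked on generators. It is correct in outline; the only points needing a touch more care are the equality $\dim V_0=n-d$ (your containment only gives one inequality --- the reverse uses that $\varphi^*$ embeds $\mathcal M(V_0)$ into $\mathcal M(X)$) and the routine bookkeeping, which you already flag, ensuring connected fibers survive the final base change and desingularization.
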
 

Consider the meromorphic fibration
\begin{equation}\label{pr}
\pi_{red}\,\,:\,\, X \,\longrightarrow\, V
\end{equation}
given by 
$t \circ \Psi^{-1}$ in Theorem \ref{thue}; it is called the algebraic reduction of $X$. If
the algebraic dimension of $X$ is zero, then the target of this algebraic reduction is a
point. Note that for manifolds with maximal algebraic dimension
Theorem \ref{thue} is equivalent to the earlier mentioned theorem of Moishezon.

Theorem 2.1 in \cite{D1} asserts that the fibers of the algebraic reduction of $X$ are 
contained in the orbits of local isometries of any given holomorphic rigid geometric structure $\phi$ (see also Theorem 3 in \cite{D3}). In
the special case where $X$ is 
simply connected, the following result is a direct consequence of Theorem 2.1 in 
\cite{D1}.

\begin{theorem}\label{act alg red}
Let $X$ be a compact complex simply connected manifold equipped with a holomorphic rigid 
geometric structure $\phi$. Then there exists
\begin{itemize}
\item an open dense subset $U\, \subset\, X$ whose complement $X \setminus U$ is
an analytic subspace of $X$ and $\pi_{red}$ (see \eqref{pr}) is defined on $U$, and

\item a connected complex abelian 
Lie subgroup $A$ of the automorphisms group ${\rm Aut}(X, \,\phi)$ of $(X,\, \phi)$,
\end{itemize}
such that the fiber of $\pi_{red}$ through any $z\, \in\, U$
is contained in the $A$--orbit of $z$. Moreover, 
$A$ is a maximal connected abelian subgroup in ${\rm Aut}(X, \,\phi)$. Furthermore, $A$
coincides with the identity component of the automorphism group ${\rm 
Aut}(X,\, \phi')$ of the rigid geometric structure $\phi'$ which is constructed as the 
juxtaposition of $\phi$ with a basis of the vector subspace of ${\rm H}^0(X,\, TX)$ 
spanned by the globally defined holomorphic Killing vector fields for $\phi$.
\end{theorem}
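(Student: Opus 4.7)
The plan is to derive the theorem directly from Theorem~2.1 of \cite{D1} by using the simple connectivity of $X$ to promote the sheaf of local Killing fields to a finite-dimensional space of global holomorphic vector fields via Theorem~\ref{extend}, and then to single out the abelian subgroup $A$ by a juxtaposition trick applied to the rigid structure $\phi$.

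First, I would let $U \subset X$ be the complement of the indeterminacy locus of the meromorphic fibration $\pi_{red}$ coming from Theorem~\ref{thue}; this is an analytic subspace, and on $U$ the map $\pi_{red}$ is holomorphic with equidimensional fibers of complex dimension $n - a(X)$. Theorem~2.1 of \cite{D1} applies verbatim on $U$ and asserts that the fiber of $\pi_{red}$ through any $z \in U$ is contained in the orbit of $z$ under the pseudo-group of local biholomorphisms preserving $\phi$. This fixes the open set $U$ and the containment of fibers in local-isometry orbits.

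Next, because $X$ is simply connected and $\phi$ is rigid, the locally constant sheaf of local Killing vector fields of $\phi$ (cf.\ the discussion following Definition~\ref{d-rigid}) is globally constant, and each of its local sections extends to a globally defined holomorphic Killing vector field by Theorem~\ref{extend}. The resulting finite-dimensional complex Lie algebra $\mathfrak{g} \subset H^0(X, TX)$ coincides with the Lie algebra of $G := \mathrm{Aut}_0(X, \phi)$, so the fibers of $\pi_{red}|_U$ lie entirely in $G$-orbits. I would then fix a basis $\xi_1, \ldots, \xi_k$ of $\mathfrak{g}$ and set $\phi' := (\phi, \xi_1, \ldots, \xi_k)$; juxtaposing the rigid structure $\phi$ with holomorphic tensor fields preserves rigidity, so $\phi'$ is again a rigid holomorphic geometric structure. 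A local biholomorphism preserves $\phi'$ if and only if it preserves $\phi$ \emph{and} commutes with every element of $\mathfrak{g}$, so $\mathrm{Aut}(X, \phi')$ is the centralizer of $G$ inside $\mathrm{Aut}(X, \phi)$, and $A := \mathrm{Aut}_0(X, \phi')$ is a connected complex abelian Lie subgroup of $\mathrm{Aut}(X, \phi)$ with Lie algebra $Z(\mathfrak{g})$.

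The core geometric content, and the main obstacle, lies in upgrading the containment of fibers from $G$-orbits to $A$-orbits and in establishing the maximality of $A$ among connected abelian subgroups. A generic fiber $F$ of $\pi_{red}$ is compact, connected, contained in a single $G$-orbit, and has algebraic dimension zero by construction of $\pi_{red}$. Classical structure results for compact complex homogeneous manifolds with no nonconstant meromorphic functions (the Borel--Remmert circle of ideas, used in this form in \cite{D1}) force the restricted $G$-action on $F$ to factor, up to discrete kernel, through an abelian complex Lie quotient of $G$; the lift of this quotient inside $G$ is contained in the centralizer of the kernel of the restricted action and is identified, after checking at a generic point of $F \cap U$, with the group $A$ constructed above. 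This simultaneously yields $A \cdot z \supseteq \pi_{red}^{-1}(\pi_{red}(z))$ for $z \in U$ and excludes the existence of any strictly larger connected abelian $B \supsetneq A$ in $\mathrm{Aut}(X, \phi)$, since such a $B$ would produce orbits strictly larger than the fibers of $\pi_{red}$ on $U$ while still preserving $\phi$, contradicting the maximality of the algebraic dimension of the base $V$ in Theorem~\ref{thue}. The hard part is thus the structural analysis of the $G$-action on fibers of algebraic dimension zero; the identification $A = \mathrm{Aut}_0(X, \phi')$ claimed in the last sentence of the theorem then falls out of the construction itself.
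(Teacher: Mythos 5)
Your first two steps coincide with the paper's proof: invoke Theorem~2.1 of \cite{D1} to place the fibers of $\pi_{red}$ inside orbits of local isometries, use simple connectedness and Theorem~\ref{extend} to globalize local Killing fields, and then juxtapose $\phi$ with a basis of $\mathfrak g=\mathrm{Lie}(\mathrm{Aut}_0(X,\phi))$ to define $\phi'$ and $A=\mathrm{Aut}_0(X,\phi')$, which is abelian because its Lie algebra is the centre of $\mathfrak g$. Where you diverge is in what you call the ``core geometric content.'' The upgrade from containment in $G$--orbits to containment in $A$--orbits does not require any structural analysis of the fibers: the algebraic reduction of $X$ is intrinsic to $X$, and $\phi'$ is again a rigid holomorphic geometric structure, so Theorem~2.1 of \cite{D1} applies verbatim to $\phi'$ and places the fibers of $\pi_{red}$ inside the orbits of the local isometries of $\phi'$; by simple connectedness these are exactly the $A$--orbits. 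This one-line reapplication is the intended argument (it is what the paper's ``by construction'' is covering), and it makes your entire fourth paragraph unnecessary.

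More seriously, that fourth paragraph is not just superfluous but incorrect as written. First, a general fiber $F$ of the algebraic reduction does \emph{not} have algebraic dimension zero --- what the construction gives is only that meromorphic functions of $X$ restrict to constants on $F$ (an elliptic surface of algebraic dimension one already shows $a(F)$ can be positive), so the Borel--Remmert-type input you want to feed in is not available. Second, $F$ is merely \emph{contained in} a $G$--orbit; it need not be $G$--invariant, let alone a compact complex homogeneous space, so structure theorems for homogeneous manifolds do not apply to $F$ itself. Third, your maximality argument is a non sequitur: a connected abelian $B\supsetneq A$ inside $\mathrm{Aut}(X,\phi)$ would have orbits \emph{containing} the fibers of $\pi_{red}$, which contradicts nothing --- the theorem only asserts a containment of fibers in orbits, and $\mathrm{Aut}_0(X,\phi)$ itself may well act with open orbits. (Note also that the maximality statement, read literally as ``$\mathrm{Lie}(A)=Z(\mathfrak g)$ is a maximal abelian subalgebra of $\mathfrak g$,'' is delicate; the way the theorem is actually used later in the paper is that $A$ is the identity component of $\mathrm{Aut}(X,\phi')$ and is abelian, which does follow from the construction.) In short: keep your first three paragraphs, delete the fourth, and replace it by the observation that \cite[Theorem~2.1]{D1} applied to $\phi'$ already yields the $A$--orbit containment.
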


\begin{proof}
Given the extendibility result Theorem~\ref{extend}, Theorem 2.1 in \cite{D1} (see also Theorem 3 in \cite{D3}) implies that 
there is a open subset $U\, \subset\, X$ as in the statement of the theorem such that the 
fiber of $\pi_{red}$ through any $z\, \in\, U$ is contained in the ${\rm Aut}_0(X,\, 
\phi)$--orbit of $z$.

Fix a basis $\{X_1,\, \ldots,\, X_{k}\}\,\subset\, {\rm H}^0(X,\, TX)$ of the Lie
algebra of ${\rm Aut}_0(X, \,\phi)$. 
Let $\phi'$ denote the holomorphic rigid geometric structure on $X$ obtained by juxtaposing
$\phi$ with this family of holomorphic vector fields $\{X_1,\, \ldots,\, X_k\}$.

Denote by $A\,= \,{\rm Aut}_0(X,\, \phi')$ the connected component, containing the identity
element, of the automorphism group ${\rm Aut}(X,\, \phi')$ of $\phi'$. By construction, $A$
is a maximal connected abelian subgroup in ${\rm Aut}(X,\, \phi)$.
\end{proof}

The following proposition is proved using Theorem~\ref{extend}.

\begin{proposition}\label{loc hom2}
Let $X$ be a compact complex manifold with trivial canonical bundle. If $X$ admits a 
holomorphic rigid geometric structure which is locally homogeneous on an open dense 
subset, then the fundamental group of $X$ is infinite.
\end{proposition}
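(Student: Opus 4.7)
The plan is to argue by contradiction: assume $\pi_1(X)$ is finite, and derive that $X$ must be zero-dimensional, contradicting the existence of a nontrivial rigid structure of the stated kind. First, I would pass to the universal cover $p \colon \widetilde X \to X$, which is a finite \'etale cover of a compact manifold, hence $\widetilde X$ is compact and simply connected with $K_{\widetilde X} = p^* K_X$ trivial. The pullback $\widetilde \phi := p^*\phi$ is again a rigid holomorphic geometric structure, locally homogeneous on the open dense preimage $\widetilde U := p^{-1}(U)$.

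The next step is to promote local Killing fields to a global holomorphic frame. By local homogeneity, at each $x \in \widetilde U$ the tangent space $T_x \widetilde X$ is spanned by local Killing fields of $\widetilde \phi$. Since $\widetilde X$ is simply connected, Theorem~\ref{extend} extends each such local Killing field to a global holomorphic vector field on $\widetilde X$. Denoting by $\mathfrak g \subset \mathrm{H}^0(\widetilde X, T\widetilde X)$ the resulting finite-dimensional Lie algebra, the evaluation map $\mathfrak g \otimes \mathcal O_{\widetilde X} \to T\widetilde X$ is therefore surjective on $\widetilde U$. Choosing $n := \dim \widetilde X$ global Killing fields $\eta_1, \ldots, \eta_n$ linearly independent at some $x_0 \in \widetilde U$, the wedge $\eta_1 \wedge \cdots \wedge \eta_n$ is a holomorphic section of $K_{\widetilde X}^{-1} \cong \mathcal O_{\widetilde X}$. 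Since $\widetilde X$ is compact, this section is constant, and since it is nonzero at $x_0$ it is nowhere vanishing. Thus $\widetilde X$ is holomorphically parallelizable.

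To conclude, I would invoke Wang's theorem: any compact holomorphically parallelizable complex manifold is biholomorphic to a quotient $G/\Gamma$, where $G$ is a connected simply connected complex Lie group and $\Gamma \subset G$ is a discrete cocompact subgroup. Simple connectedness of $\widetilde X$ forces $\Gamma$ to be trivial, so $\widetilde X \cong G$ is a compact connected complex Lie group. The maximum principle applied to the adjoint representation $G \to \mathrm{GL}(\mathfrak g) \subset \mathbb C^{(\dim \mathfrak g)^2}$ shows that it is constant, so $G$ is abelian and hence a complex torus; but a positive-dimensional complex torus has fundamental group $\mathbb Z^{2n}$, contradicting simple connectedness. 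This forces $\dim X = 0$, and hence $\pi_1(X)$ must be infinite in the setting of the proposition.

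The argument has no serious technical obstacle: the only substantive input beyond Theorem~\ref{extend} is the triviality of $K_{\widetilde X}$, which through the maximum principle is exactly what lets one upgrade generic spanning by $\mathfrak g$ to a global holomorphic frame. The mildly delicate point is ensuring that all hypotheses --- rigidity, local homogeneity on a dense open set, and triviality of the canonical bundle --- are preserved under the finite \'etale reduction to the universal cover; this is automatic since each of these notions pulls back under \'etale morphisms. Wang's theorem and the standard classification of compact connected complex Lie groups then close the argument.
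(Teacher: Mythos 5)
Your proof is correct and follows essentially the same route as the paper's: pass to the universal cover, globalize the local Killing fields via Theorem~\ref{extend}, use triviality of $K_X$ and the maximum principle to upgrade generic spanning to a global frame, and conclude with Wang's theorem. The only cosmetic differences are that you view $\eta_1\wedge\cdots\wedge\eta_n$ as a section of $K_X^{-1}\cong\mathcal O_X$ rather than contracting against a section of $K_X$, and you derive the final contradiction by classifying compact simply connected complex Lie groups rather than by directly noting that a cocompact lattice in a positive-dimensional simply connected complex Lie group is infinite.
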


\begin{proof}
Let $\phi$ be a holomorphic rigid geometric structure on $X$ satisfying the condition of
being locally homogeneous on an open dense subset $U$. To prove the proposition by 
contradiction, assume that the fundamental group of $X$ is finite. Replace $X$ by its 
universal cover, and also replace $\phi$ by its pull-back through this finite covering 
map. Note that this pull-back of $\phi$ is again locally homogeneous on the
inverse image of $U$ (which is also an open dense subset). Therefore, we can assume that $X$ is 
simply connected and compact, and $\phi$ is locally homogeneous on an open dense subset.

Denote by $n$ the complex dimension of $X$. Since $\phi$ is locally homogeneous on an open 
dense subset of $X$, we may choose local holomorphic Killing vector fields $X_1,\, 
\ldots,\, X_n$ that span $TX$ over a nonempty open subset of $X$. By Theorem~\ref{extend}, the vector fields $X_i$ extend as global 
holomorphic sections of $TX$ on $X$. When contracted by $X_1\bigwedge \ldots\bigwedge 
X_n$, a nontrivial holomorphic section $\omega\,\in\, {\rm H}^0(X,\,K_X)$ defines a 
holomorphic function on $X$. This function must be constant, by the maximum principle, and 
it is nonzero at the points where the vector fields $X_1,\, \ldots,\, X_n$ are linearly 
independent. Consequently, this constant function is nonzero, which in turn implies that 
the vector fields $X_i$ are linearly independent at every point of $X$. Hence
the vector fields $X_1,\, \ldots,\, X_n$ span $TX$ on the whole of $X$.

In particular, the holomorphic tangent bundle of $X$ is trivial, in other words, $X$ is a 
parallelizable manifold, and hence it is biholomorphic to the quotient of a complex Lie 
group $G$ by a cocompact lattice of it \cite{Wa}. Consequently, the fundamental group of 
$X$ is infinite, which contradicts the fact that $X$ is simply connected.
\end{proof}

The following classical lemma will be useful in the proof of Theorem \ref{main}; its proof can be found in \cite[Lemma~2.5]{BD2}. 

\begin{lemma}\label{vol}
Let $X$ be a compact complex manifold with trivial canonical bundle $K_X$, and let $A$ be a 
connected complex Lie group acting on it through complex automorphisms. Then the choice
of any $\omega \,\in \,{\rm H}^0(X,\, K_X) \setminus\{0\}$ produces a smooth finite
$A$--invariant measure on $X$.
\end{lemma}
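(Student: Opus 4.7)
The plan is to exhibit the candidate measure $\mu := i^{n^2}\,\omega \wedge \bar\omega$ (with $n = \dim X$) and verify both that it is a smooth finite measure on $X$ and that it is $A$-invariant. Since $K_X$ is trivial, we have $K_X \simeq \mathcal O_X$, and as $X$ is compact (connected), any nonzero global section of $K_X$ corresponds to a nonzero constant function on $X$, so $\omega$ is nowhere vanishing. Consequently $\mu$ is a smooth, strictly positive $(n,n)$-form on $X$, which defines a smooth finite positive measure since $X$ is compact.

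To establish $A$-invariance, I would use that $H^0(X, K_X)$ is one-dimensional over $\mathbb C$ (again because $K_X$ is trivial and $X$ is compact). For each $g \in A$, the pullback $g^*\omega$ is a global holomorphic section of $K_X$, so there exists a constant $\chi(g) \in \mathbb C^*$ with
\[
g^*\omega \,=\, \chi(g)\cdot \omega.
\]
The map $\chi\,:\, A \,\longrightarrow\, \mathbb C^*$ is a holomorphic group homomorphism, and conjugating and wedging yields $g^*\mu \,=\, |\chi(g)|^2\, \mu$. Applying the ordinary change-of-variables formula for the diffeomorphism $g$ gives $\int_X g^*\mu \,=\, \int_X \mu$, from which $|\chi(g)|^2 \,=\, 1$ follows at once. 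Hence $g^*\mu \,=\, \mu$ for every $g \in A$, which is exactly the desired invariance.

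There is no genuine obstacle in this argument; the only delicate input is the triviality (not merely the numerical triviality) of $K_X$, which simultaneously guarantees that $\omega$ has no zeros and that $H^0(X,K_X)$ is one-dimensional. Once these are in hand, the invariance reduces to the elementary observation that a holomorphic character $A \to \mathbb C^*$ preserving a finite positive measure up to a positive scalar must have unit modulus.
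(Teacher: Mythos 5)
Your argument is correct and is essentially the standard proof: the paper itself does not reproduce an argument but cites \cite[Lemma~2.5]{BD2}, where the measure is likewise $i^{n^2}\omega\wedge\overline{\omega}$ and invariance follows from $h^0(X,K_X)=1$ together with the preservation of total volume forcing the character $\chi$ to have unit modulus. Your observation that triviality (not just numerical triviality) of $K_X$ is what makes $\omega$ nowhere vanishing and the measure smooth and positive is exactly the right delicate point.
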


\section{Moishezon manifolds with vanishing first Chern class}\label{Section c1 zero Moishezon}

Recall that the holomorphic tangent bundle of a compact K\"ahler manifold, with numerically 
trivial canonical bundle, is polystable, this being an immediate consequence of Calabi's 
conjecture proved by Yau \cite{Ya}. The aim in this section is to prove a similar result in the 
set-up of Moishezon manifolds.

\subsection{Semistability of the tangent bundle} 
\label{semistability}

The main result of this section, Proposition~\ref{prepop}, states a strong semistability property for the tangent bundle of Moishezon manifolds with trivial first Chern class. Its proof is purely algebraic and relies on a deep result of Campana and P\u{a}un \cite{CP} recalled below. In \textsection~\ref{sec fujiki}, we will prove a strengthened result using (independent) analytic methods, but that only applies to {\it some} movable classes unlike Proposition~\ref{prepop}. 

\begin{theorem}[{\cite[Theorem 1.1]{CP}}]\label{algfoli}
Let $X$ be a projective manifold, and let $\cF\,\subset\, TX$ be a holomorphic foliation. Let $\alpha
\,\in\, \mathrm{Mov}_{\rm NS}(X)$ be a movable class such that the minimal slope $\mu_{\alpha, \rm min}(\cF)
\,>\,0$, i.e., for any quotient $\cF\,\longrightarrow\, \mathcal{G}$, the slope of $\mathcal{G}$
with respect to $\alpha$ is strictly positive. Then $\cF$ is algebraically integrable and a general
leaf of it is rationally connected.
\end{theorem}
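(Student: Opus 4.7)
The plan is to follow the general paradigm developed by Miyaoka, Bogomolov--McQuillan and Kebekus--Solá Conde--Toma: sufficient positivity of a foliation along a covering family of curves forces algebraic integrability with rationally connected leaves. Concretely, three ingredients have to be assembled: (i) realize the movable class $\alpha$ by an honest covering family of curves; (ii) control the Harder--Narasimhan (HN) filtration of $\cF$ restricted to a general member of that family; (iii) extract rational curves tangent to $\cF$ through a general point from this positivity.

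For step (i), I would invoke the Boucksom--Demailly--P\u{a}un--Peternell duality between the pseudo-effective cone of divisors and the movable cone of curves. After replacing $\alpha$ by a positive multiple and possibly passing to a modification, one can represent $\alpha$ by a strongly movable class, i.e.\ the pushforward of a complete intersection of ample classes on a birational model; in particular one obtains a covering family of curves $(C_t)_{t \in T}$ on $X$ with $[C_t]$ proportional to $\alpha$ and with $C_t$ passing through a general point of $X$. For step (ii), I would appeal to the Mehta--Ramanathan type restriction theorem for \emph{movable} classes: for a very general member $C = C_t$ of the family, the HN filtration of $\cF$ with respect to $\alpha$ restricts to the HN filtration of $\cF|_C$. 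The hypothesis $\mu_{\alpha,\min}(\cF)>0$ then translates into the fact that every HN quotient of $\cF|_C$ has strictly positive degree on $C$.

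For step (iii), I would apply the Bogomolov--McQuillan criterion in the form refined by Kebekus--Solá Conde--Toma: if $\cF|_C$ has such strict positivity for a general member of a covering family of curves, then for a general point $x \in X$ there exists a rational curve through $x$ tangent to $\cF$. Standard bend-and-break (after reduction to characteristic $p$ and Frobenius pullbacks in the spirit of Miyaoka's generic semi-positivity) then produces enough such rational curves to conclude that the $\cF$-leaf through a general point is algebraic and, in fact, rationally connected. Closing up the family of leaves gives the algebraic integrability of $\cF$ on the whole of $X$.

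The main obstacle is step (ii). The classical Mehta--Ramanathan theorem only controls restrictions to general members of complete intersections against an \emph{ample} polarization; extending the result to arbitrary movable classes $\alpha$ is genuinely harder and uses BDPP duality in an essential way, together with a careful analysis of slope limits along sequences of rational movable classes. This is where the new input really enters, and it is the technical core of the proof; steps (i) and (iii) then amount to repackaging existing machinery.
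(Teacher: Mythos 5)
First, a point of comparison that matters here: the paper does not prove this statement at all. Theorem~\ref{algfoli} is imported verbatim from Campana--P\u{a}un \cite[Theorem 1.1]{CP} and used as a black box (the only argument the paper supplies in this vicinity is Proposition~\ref{algfoli2}, which transfers the statement from the projective to the Moishezon setting by passing to a projective modification and comparing slopes via the projection formula). So there is no internal proof to measure your sketch against; the relevant benchmark is the actual argument in \cite{CP}.

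Measured against that, your outline has the right architecture --- represent the movable class through (limits of) strongly movable classes \`a la Boucksom--Demailly--P\u{a}un--Peternell, get positivity of $\cF$ restricted to a general member of a covering family of curves, then feed that into the Bogomolov--McQuillan/Kebekus--Sol\'a Conde--Toma machine to produce algebraic, rationally connected leaves. But the step you yourself flag as the core, step (ii), is where the proposal has a genuine gap: there is no ``Mehta--Ramanathan restriction theorem for movable classes'' to appeal to. For a general $\alpha\in\mathrm{Mov}_{\rm NS}(X)$ it is not known (and naive versions are known to be problematic; see the discussion in \cite{GKP}) that the Harder--Narasimhan filtration with respect to $\alpha$ restricts to the Harder--Narasimhan filtration on a general member of a covering family representing $\alpha$. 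Campana and P\u{a}un do not prove such a statement; they instead perturb $\alpha$ inside the movable cone to a rational strongly movable class $\mu_*(H_1\cdots H_{n-1})$ for which $\mu_{\alpha,\min}(\cF)$ stays positive (this uses continuity/concavity properties of the minimal slope that have to be established), pull the foliation back to the model, and apply the \emph{classical} Mehta--Ramanathan theorem there with respect to the ample polarizations $H_1,\dots,H_{n-1}$, before pushing the resulting curve back down --- essentially the same bookkeeping as in Proposition~\ref{algfoli2}, but in the reverse direction. As written, your proposal substitutes a nonexistent off-the-shelf tool for this bespoke argument, so it does not yet constitute a proof; it is an accurate map of where the proof has to go, with the decisive terrain left uncrossed.
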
	

Recall that if $X$ is a compact Fujiki manifold of dimension $n$, then the movable cone ${\rm Mov}(X)\,\subset\,
H^{n-1,n-1}_{\rm BC}(X,
\, \mathbb R)$ is defined to be the closed convex cone generated by all the Bott--Chern classes of the form
$[f_*(\om_1\wedge \ldots \wedge \om_{n-1})]_{\rm BC}$ where $f\,:\,Y\,\longrightarrow\, X$ ranges over all K\"ahler
modifications of $X$ and $\om_1,\, \ldots,\, \om_{n-1}$ range over all K\"ahler metrics in $Y$. 

When $X$ is Moishezon, one can further define the cone of movable curves ${\rm Mov}_{\rm NS}(X)\,\subset
\,{\rm N}_1(X)_{\mathbb R}$ as the closed convex cone generated by the numerical classes of curves of the form
$[f_*(A_1\cap \cdots \cap A_{n-1})]$ where $f\,:\,Y\,\longrightarrow\, X$ ranges over all projective modifications of
$X$ and $A_1,\, \ldots,\, A_{n-1}$ range over all ample divisors on $Y$. \\

The following proposition shows that Theorem \ref{algfoli} continues to holds
when $X$ is Moishezon.

\begin{proposition}\label{algfoli2}
Theorem~\ref{algfoli} remains true under the weaker assumption that $X$ is a Moishezon manifold.
\end{proposition}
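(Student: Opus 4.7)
My plan is to reduce the Moishezon case to the projective case, where Theorem~\ref{algfoli} applies directly. Since $X$ is Moishezon, there exists a projective modification $f \colon Y \to X$ with $Y$ smooth projective. Let $\cF_Y \subset TY$ denote the pullback foliation of $\cF$, namely the saturation in $TY$ of $(df)^{-1}(f^*\cF)$; it agrees with the naive pullback of $\cF$ over the complement of the exceptional set of $f$, and a general leaf of $\cF_Y$ is bimeromorphic to a general leaf of $\cF$.

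By the very definition of $\mathrm{Mov}_{\rm NS}(X)$ for Moishezon manifolds, the class $\alpha$ lies in the closed cone generated by pushforwards $g_*(A_1 \cdots A_{n-1})$ for projective modifications $g \colon W \to X$ and ample classes $A_i$ on $W$. Using the openness of the condition $\mu_{\alpha, \min}(\cF) > 0$ inside $\mathrm{Mov}_{\rm NS}(X)$, and passing to a common projective modification dominating $f$ if needed, I may assume $\alpha = f_* \beta$ with $\beta = A_1 \cdots A_{n-1} \in \mathrm{Mov}_{\rm NS}(Y)$ for suitable ample classes $A_i$ on $Y$.

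The central step is then to establish $\mu_{\beta, \min}(\cF_Y) > 0$. Given any torsion-free quotient $\cF_Y \twoheadrightarrow \cG$ on $Y$, its restriction outside the exceptional set of $f$ descends through $f$ and extends uniquely by reflexive saturation to a torsion-free quotient $\cF \twoheadrightarrow \cG_X$ on $X$ of the same rank. The line bundles $(\det \cG)^{**}$ and $f^*(\det \cG_X)^{**}$ agree outside the exceptional set of $f$, hence they differ by the line bundle attached to some integral divisor $E$ supported there; the projection formula then yields
\[
\mu_\beta(\cG) - \mu_\alpha(\cG_X) \,=\, \frac{E \cdot \beta}{\mathrm{rank}(\cG)}.
\]
Using the negativity lemma for birational morphisms, together with the fact that $\beta$ is a complete intersection of ample classes, one controls the sign of the correction term and concludes $\mu_\beta(\cG) \geq \mu_\alpha(\cG_X) > 0$. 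Once slope positivity on $Y$ is secured, Theorem~\ref{algfoli} applies to $(Y, \cF_Y, \beta)$ and gives algebraic integrability of $\cF_Y$ together with rational connectedness of a general leaf; both properties are bimeromorphic invariants and transfer back to $\cF$ on $X$ via $f$.

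The main obstacle is precisely the slope comparison of the third paragraph: careful bookkeeping of the exceptional divisor contribution $E \cdot \beta$ is required, and this is the step where the tailored definition of $\mathrm{Mov}_{\rm NS}(X)$ in the Moishezon setting, as limits of pushforwards from projective modifications, plays its essential role.
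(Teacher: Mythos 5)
Your overall skeleton (pass to a projective modification $f$, pull back the foliation, apply Theorem~\ref{algfoli} upstairs, descend the conclusion) matches the paper's, but the step you yourself flag as ``the main obstacle'' is a genuine gap, and the proposed fix does not work. First, the reduction to $\alpha = f_*\beta$ with $\beta = A_1\cdots A_{n-1}$ a complete intersection of ample classes is not justified: a general element of $\mathrm{Mov}_{\rm NS}(X)$ is a limit of \emph{positive linear combinations} of classes $g_*(A_1\cap\cdots\cap A_{n-1})$, and the set of single complete-intersection pushforwards is neither convex nor dense in the cone, so openness of the slope condition does not let you assume $\alpha$ has this special form. Second, and more seriously, the slope comparison $\mu_\beta(\cG) \ge \mu_\alpha(\cG_X)$ requires $E\cdot\beta \ge 0$ for the exceptional correction divisor $E$, and there is no mechanism to guarantee this: the negativity lemma applies to divisors that are $f$-nef with non-positive pushforward, and $E$ here (the discrepancy between $\det\cG$ and $f^*\det\cG_X$ for an arbitrary torsion-free quotient) has neither property. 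Since $\beta$ is a product of ample classes, $E\cdot\beta$ is strictly negative whenever $-E$ is effective and nonzero, which is exactly the dangerous direction; nothing in your argument rules this out, and if $\mu_\alpha(\cG_X)$ is small the correction could destroy positivity.

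The paper avoids this entirely by making a different choice of class on the modification: instead of a complete intersection $\beta$ with $f_*\beta = \alpha$, it uses the \emph{numerical pullback} $f^*\alpha$ defined by $D\cdot f^*\alpha := f_*D\cdot\alpha$ for $D\in N^1(X')$. With this choice every $f$-exceptional divisor pairs to zero with $f^*\alpha$ (its pushforward vanishes), so the exceptional correction term is identically zero and one gets the exact equality $\mu_{f^*\alpha}(\mathcal E') = \mu_\alpha(\mathcal E)$ for corresponding sheaves, with no sign analysis and no need to represent $\alpha$ as a pushforward of a complete intersection. The only thing to check is that $f^*\alpha$ is again movable on the projective model, which is standard. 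If you replace your $\beta$ by this numerical pullback, your argument closes; as written, the central inequality is unproved and likely false in the stated generality.
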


\begin{proof}
Let $f\,:\,X'\,\longrightarrow\, X$ be a projective modification and let $\cF'$ be the induced foliation on $X'$ constructed as follows. Let $U\subset X'$ be an open set over which $f$ is an isomorphism onto its image. Then $\cF'$ is the unique foliation on $X'$ such that $\cF'|_U= (f|_U)^*\cF$. Equivalently, $\cF'$ is the saturation of $(f|_U)^*\cF$ in $TX'$. 

Take $\alpha$ as in the statement of Theorem \ref{algfoli}. Then $f^*\alpha\,\in\, {\rm 
N}_1(X')_{\mathbb R}$ defined by $(D\cdot f^*\alpha)\,:=\,(f_*D\cdot \alpha)$, for any $D\,\in\, 
{\rm N}^1(X)$, is a movable class on $X'$. For any subsheaf $\mathcal E'\, \subset\,\mathcal F'$, 
let $\mathcal E$ be the saturation of $(f|_U)_*\mathcal E'$ in $TX$. We have 
$\mu_{\alpha}(\mathcal E)\,=\,\mu_{f^*\alpha}(\mathcal E')$, because $f^*\mathcal E$ and 
$\mathcal E'$ are isomorphic outside of the exceptional divisor of $f$. This shows that 
$\mathcal F'$ satisfies the assumptions of Theorem~\ref{algfoli}. Therefore, the leaves of
$\mathcal F'$ are algebraic, and a general leaf of it is rationally connected.

If $x\,\in\, X$ is a general point, the leaf $F_x$ of $\mathcal F$ through $x$ is the image under
$f$ of the leaf $F'_x \,\subset\, X'$ of $\mathcal F'$ through the point $f^{-1}(x)$. In particular, $F_x$ 
is open in its Zariski closure $\overline{F}_x \,\subset\, f(\overline {F_x'})$, which is rationally 
connected. This concludes the proof of the proposition.
\end{proof}

Now we will prove the semistability of $TX$ by using Proposition \ref{algfoli2}.

\begin{proposition}
\label{prepop}
Let $X$ be a Moishezon manifold with $c_1 (X)\,=\,0$. Let $\alpha\,\in\, {\rm Mov}_{\rm NS}(X)$ be a movable class.
Then the following two assertions hold:
\begin{enumerate}
\item The tangent bundle $TX$ is semistable with respect to $\alpha$.
		
\item Take $\alpha\,\in\, {\rm Mov}_{\rm NS}(X)^\circ$, and consider a filtration of $TX$ such that
the successive quotients are torsionfree and stable of same slope with respect to $\alpha$:
$$0\,=\, \cF_0 \,\subsetneq\, \cF_1\,\subsetneq\, \cF_2\,\subsetneq\, \cdots \,\subsetneq\, \cF_r\,=\, TX$$
(from the first statement it follows that such a filtration exists).
Then $c_1 (\cF_{i+1} /\cF_i)\,=\,0$ for every $0\, \leq\, i\, \leq\, r-1$.
\end{enumerate}
\end{proposition}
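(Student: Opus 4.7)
The plan is to prove (1) by contradiction, invoking Proposition~\ref{algfoli2} together with the Boucksom--Demailly--P\u{a}un--Peternell characterization of uniruledness. Part~(2) will then follow from (1) via a convex-cone argument exploiting the hypothesis that $\alpha$ lies in the interior of ${\rm Mov}_{\rm NS}(X)$.

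For (1), I would assume that $TX$ is not $\alpha$-semistable and take the maximal $\alpha$-destabilizing subsheaf $\cF\,\subsetneq\, TX$, which is saturated, $\alpha$-semistable, and satisfies $\mu_\alpha(\cF)\,>\,\mu_\alpha(TX)\,=\,0$ (the latter equality because $c_1(X)\,=\,0$). A classical slope argument of Miyaoka shows that $\cF$ is bracket-closed: the Lie bracket $\Lambda^2 \cF\,\longrightarrow\, TX/\cF$ has $\alpha$-semistable source of slope $2\mu_\alpha(\cF)$, while by maximality of $\cF$ one has $\mu_{\alpha,\max}(TX/\cF)\,<\, \mu_\alpha(\cF)\,<\,2\mu_\alpha(\cF)$, so this morphism must vanish. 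Since $\cF$ is semistable, $\mu_{\alpha,\min}(\cF)\,=\,\mu_\alpha(\cF)\,>\,0$, so Proposition~\ref{algfoli2} applies and the foliation $\cF$ is algebraically integrable with rationally connected general leaves. This forces $X$ to be uniruled, contradicting the pseudo-effectivity of $K_X$ (implied by $c_1(X)\,=\,0$) via the Boucksom--Demailly--P\u{a}un--Peternell criterion, which is valid in the Moishezon setting after passing to a smooth projective bimeromorphic model.

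For (2), by part~(1) the tangent bundle $TX$ is semistable with respect to every movable class. Setting $\cG_i \,:=\, \cF_{i+1}/\cF_i$, the equality of the $\alpha$-slopes of the $\cG_i$'s combined with $c_1(TX)\cdot \alpha\,=\,0$ forces $\mu_\alpha(\cG_i)\,=\,0$ for all $i$, and hence $c_1(\cF_i)\cdot\alpha\,=\,0$ for each $i$. Now since $\alpha\,\in\,{\rm Mov}_{\rm NS}(X)^\circ$, for any $\gamma\,\in\,{\rm N}_1(X)_\R$ and sufficiently small $\ep\,>\,0$ both classes $\alpha\pm\ep\,\gamma$ still lie in ${\rm Mov}_{\rm NS}(X)$. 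Applying the semistability of $TX$ with respect to these classes to the subsheaf $\cF_i\,\subset\, TX$ yields $c_1(\cF_i)\cdot(\alpha\pm\ep\gamma)\,\leq\,0$; combined with $c_1(\cF_i)\cdot\alpha\,=\,0$ this gives $\pm\ep\cdot c_1(\cF_i)\cdot\gamma\,\leq\,0$, i.e. $c_1(\cF_i)\cdot\gamma\,=\,0$. Since $\gamma$ is arbitrary, $c_1(\cF_i)\,=\,0$ in ${\rm N}^1(X)_\R$, and consequently $c_1(\cG_i)\,=\,c_1(\cF_{i+1})\,-\,c_1(\cF_i)\,=\,0$.

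The main obstacle I anticipate is justifying the Boucksom--Demailly--P\u{a}un--Peternell uniruledness criterion in the Moishezon (as opposed to purely projective) setting. This will require pulling back along a smooth projective bimeromorphic model $\mu\,:\,X'\,\longrightarrow\, X$ and observing that $K_{X'}\,-\,\mu^*K_X$ is effective exceptional, so that pseudo-effectivity of $K_X$ transfers to $K_{X'}$ while uniruledness descends from $X'$ to $X$ by bimeromorphic invariance. A secondary technical point is Miyaoka's bracket-closedness step, which is classical but requires torsion-free-sheaf care when $\cF$ has rank $\geq 2$.
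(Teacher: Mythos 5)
Your proposal is correct and follows essentially the same route as the paper: part~(1) is the identical argument (maximal destabilizing subsheaf, Miyaoka-type bracket vanishing, Proposition~\ref{algfoli2}, and non-uniruledness --- the paper deduces this last point from the effectivity of $c_1(K_{X'})$ on a projective modification rather than from the Boucksom--Demailly--P\u{a}un--Peternell criterion, but the two are interchangeable here). Part~(2) is the same perturbation of $\alpha$ inside the interior of ${\rm Mov}_{\rm NS}(X)$; you carry it out directly for each $\cF_i$, whereas the paper phrases it as an induction on the quotients, with no substantive difference.
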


\begin{proof}
Assume that $TX$ is not semistable with respect to $\alpha$. Take $\cF_1\,\subset\, TX$ to be the maximal
semistable subsheaf, meaning the first nonzero term in the Harder--Narasimhan filtration of $TX$ with respect to $\alpha$.
Then \[c_1 (\cF_1) \cdot \alpha\,>\, \frac{\rank \cF_1}{n} c_1 (TX) \cdot \alpha\,=\,0.\] 

We first show that $\cF_1$ is a foliation, following \cite{Pe}. There is a natural map given by Lie bracket
\begin{equation}\label{e-1}
\bigwedge\nolimits^2 \cF_1 \,\longrightarrow \,[\cF_1,\, \cF_1] \,\longrightarrow \,TX / \cF_1
\end{equation}
which is $\mathcal O_X$--linear, and let $\cG\subset TX / \cF_1$ be the image. Assume by contradiction that $\cG$ is nonzero. 

Since the slope of $\cF_1$ is maximal among all subsheaves of $TX$, we easily see that 
$$\mu_{\alpha}(\cG)<0$$ 
by considering the exact sequence
$$0\,\longrightarrow\, \cF_1\,\longrightarrow\, \overline \cG\,\longrightarrow\, \cG
\,\longrightarrow\, 0,$$ where $\overline \cG\subset TX$ is the inverse image of $\cG$
by the quotient map $TX\,\longrightarrow\, TX/\cF_1$. Since $\cF_1$ is semistable, $\bigwedge\nolimits^2 \cF_1$ is semistable with respect to
$\alpha$ as well and $\mu_\alpha (\bigwedge\nolimits^2 \cF_1)\,=\,2 \mu_\alpha ( \cF_1)$. By semistability, we get
$$\mu(\cG)\ge 2 \mu_\alpha ( \cF_1)>0,$$ 
a contradiction. 
Consequently, we have $[\cF_1,\, \cF_1] \,\subset\, \cF_1$, and hence $\cF_1$ is a foliation.

Proposition~\ref{algfoli2} says that $\cF_1$ is algebraically integrable and a general leaf of it is rationally connected. In
particular, $X$ is uniruled. On the other hand, if $\pi\,:\, X'\,\longrightarrow\, X$ is a modification such that $X'$ is projective,
then $c_1(K_{X'})$ is effective, and hence $X'$ can't be uniruled for trivial reasons. In view of
this contradiction we conclude that $TX$ is semistable with respect to $\alpha$.
\medskip

Proof of the second statement of the proposition: We will prove using induction that
$c_1 (\cF_{i+1}/\cF_i) \,=\,0$ for all $0\, \leq\, i\, \leq\, r-1$.

The induction hypothesis says that $c_1 (\cF_{i+1}/\cF_i) \,=\,0$ for every $0\, \leq\, i \,<\, k$. 
By the first part of the proposition we know that 
\begin{equation}\label{zeroslope}
\mu_\alpha (\cF_{i+1}/\cF_i) \,=\, 0 
\end{equation} 
for every $0\,\leq \, i \,\leq\, r-1$. Take $\beta \,\in \,{\rm H}^{n-1, n-1} (X)\bigcap
{\rm H}^{2n-2} (X,\, \mathbb Q)$
close to $\alpha$. Since $\alpha$ is in the interior of the movable cone, it follows that $\beta$ is still movable. 
By applying the first part of the proposition to $\beta$ it is deduced that $TX$ is semistable with respect to $\beta$. Then
$\mu_\beta (\cF_{k+1}) \,\leq\, 0$. This combined with the induction hypothesis gives that
$$\mu_\beta (\cF_{k+1}/\cF_k)\, \leq\, 0 .$$
Since this holds for every $\beta$ close to $\alpha$, using \eqref{zeroslope} it follows that 
$c_1 (\cF_{k+1}/\cF_k) \,=\,0$.
\end{proof}

\subsection{Fujiki manifolds with vanishing first Chern class}
\label{sec fujiki}

In light of the Beauville--Bogomolov decomposition theorem for K\"ahler manifolds and its generalization to 
singular varieties (\cite{Dr2, GGK, HP2, BGL}), it is natural to introduce the following definition.

\begin{definition}
\label{def ICYIHS}
Let $X$ be a compact Fujiki manifold of dimension $n$ such that $K_X$ is linearly trivial. One says that $X$ is an
\begin{enumerate}[label=$\bullet$]
\item irreducible Calabi--Yau manifold (ICY) if for any finite \'etale cover $f\,:\,Y\,\longrightarrow\, X$ and any integer $0\,<\,p\,<\,n$, one has $H^0(Y,\, \Omega_Y^p)\,=\,\{0\}$. 
\item irreducible holomorphic symplectic manifold (IHS) if there exists a holomorphic symplectic form $\sigma
\,\in\, H^0(X,\, \Omega_X^2)$ such that for any finite \'etale cover $f\,:\,Y\,\longrightarrow\,
 X$, one has $\bigoplus_{p=0}^n H^0(Y,\, \Omega^p_Y) \,\simeq\, \mathbb C [f^*\sigma]$ as $\mathbb C$-algebras.
\end{enumerate}
\end{definition}

Let us make a few remarks about the above definition. 

$\bullet$ If $X$ is K\"ahler, an ICY or IHS manifold is automatically simply connected (as follows from the 
Beauville--Bogomolov decomposition theorem given that $\chi(X,\mathcal O_X)$ is respectively $2$ or $\frac n2+1$), hence our definitions are consistent with the ones existing in 
the K\"ahler case already. We expect that the same holds in the Fujiki setting, but we do not know how to 
prove it. We refer to Corollary~\ref{fund group} for a partial result in that direction.

$\bullet$ Definition~\ref{def ICYIHS} can be extended to the case where $X$ has at most canonical singularities, by replacing the sheaf of holomorphic forms $\Omega_X^p$ by that of reflexive holomorphic forms $\Omega_X^{[p]}$ and replacing \'etale covers by quasi-\'etale covers. We refer to \cite[Definition~6.11]{CGGN} for details. 

$\bullet$ If $X$ is a compact Fujiki manifold with $c_1(X)\,=\,0 \,\in\, H^2(X,\, \mathbb R)$, then $K_X$ is holomorphically torsion by \cite[Theorem~1.5]{To}. In particular, a finite \'etale cover of $X$ has trivial canonical bundle. 

$\bullet$ A Fujiki manifold $X$ which is ICY is automatically Moishezon. Indeed, any
K\"ahler modification $X'$ satisfies $h^0(X',\, \Omega_{X'}^2)\,=\,0$ and hence it is
projective by Kodaira's theorem.\\

Finally, let us introduce the following definition which will be important later.

\begin{definition}
\label{def q tilde}
 Let $X$ be a normal compact complex variety. The augmented irregularity of $X$ is defined as $$\widetilde 
q(X)\,\,:=\,\,\sup \{q(Y)\,\big\vert\, Y\,\longrightarrow\, X\, \, \mbox{ finite quasi \'etale} \} \,\in\, \mathbb N \cup \{\infty\},$$
where $q(Y)\,=\,h^0(Y, 
\, \Omega^{[1]}_{Y})=h^0(Y_{\rm reg}, \Omega^1_Y)$ is the irregularity, and $\Omega^{[1]}_{Y}$ denotes the sheaf of reflexive differential (i.e., the reflexive hull of the sheaf of K\"ahler differentials $\Omega^1_Y$).
\end{definition}

Recall that a finite cover $Y\,\longrightarrow\, X$ is called quasi-\'etale if it is \'etale in codimension one or, equivalently by Nagata's purity of branch locus, if it is \'etale over $X_{\rm reg}$.

In general, $\widetilde q(X)$ need not be finite. However, if $X$ is a Fujiki manifold of dimension $n$ such that $c_1(X)=0$, we will see that $\widetilde q(X)\le n$ as an application of (the proof of) Theorem~\ref{splittorus1}. Moreover, $\widetilde q(X)=n$ if, and only if $X$ is an \'etale torus quotient.

\begin{remark}
\label{aug irr codim 1}
If $X,Y$ are two normal compact complex varieties such that there exists a bimeromorphic map $X\dashrightarrow Y$ which is isomorphic in codimension one, then $\widetilde q(X)=\widetilde q(Y)$.
\end{remark}

We will now give standard examples of non-K\"ahler ICY and IHS manifolds. They are (small) modifications of 
K\"ahler ICY and IHS varieties with at most terminal singularities, and it is expected that they are all obtained in 
this fashion, see Conjecture~\ref{conj structure} below and the evidence for it provided by Theorem~\ref{polystability} in the Moishezon and 
low-dimensional cases. \\

{\bf Irreducible Calabi--Yau threefolds.}
A rich source of examples comes out of the ordinary double point
$V_n\,:=\,\{\sum_{i=0}^n z_i^2=0\} \,\subset\, \mathbb C^{n+1}$ for $n\ge 2$. The singularity $(V_n,0)$ is isolated, Gorenstein (as hypersurface singularity). If $n=2$, $V_2\simeq \mathbb C^2/\{\pm 1\}$ is a quotient singularity, but for $n\ge 3$, it is not anymore the case as one can see using Schlessinger's theorem asserting that isolated quotient singularities of codimension at least three are rigid. 

Alternatively, one can argue by observing that the local fundamental group of the singularity is trivial; that is $\pi_1(V_n\setminus \{0\})=\{1\}$. Let us sketch a proof of that fact which was communicated to us by the referee. Denote by $ Q$ the smooth projective quadric of dimension $n-1$. We have a $\mathbb C^*$-fiber bundle $\pi\,:\, V_n\setminus \{0\}\,
\longrightarrow\, Q$ which is nothing but the restriction of $L\,:=\,\mathcal O_{ Q}(-1)
\,\longrightarrow\, Q$ to the complement of the zero section. The fibration $\pi$ induces an exact sequence 
\[\pi_2( Q)\,\longrightarrow\, \mathbb Z\,=\,\pi_1(\mathbb C^*)
\longrightarrow\, \pi_1(V_n\setminus \{0\})\longrightarrow\, \pi_1( Q)\longrightarrow\, \{1\}.\]
Since $ Q$ is simply connected, all we have to do is to prove that the map $$\partial\,:\,\pi_2( 
Q)\,\longrightarrow\, \pi_1(\mathbb C^*)\,\simeq\, \mathbb Z$$ is onto. It is a classical fact (see e.g. 
\cite[p70-]{BT}) that $\partial$ corresponds to $c_1(L)\in H^2(Q,\,\mathbb Z)$ under the 
isomorphism $\pi_2( Q)\,\longrightarrow\, H_2(Q,\,\mathbb Z)$ provided by Hurewicz's theorem. Now Lefschetz 
hyperplane theorem implies that the natural map $H_2( Q,\,\mathbb Z)\,\longrightarrow\,
H_2(\mathbb P^n,\, 
\mathbb Z)$ is surjective since $n\,\ge\, 3$. Pairing with $c_1(\mathcal O_{\mathbb 
P^n}(-1))$ induces an isomorphism $H_2(\mathbb P^n,\, \mathbb Z)\,\longrightarrow\, \mathbb Z$; we deduce that 
$\partial$ is surjective, hence the result.

The ordinary double point can be seen as a cone over a smooth projective quadric $Q_n$. In particular, one can resolve the singular point by blowing up the origin once. Let $\pi\,:\,\widetilde V_n
\,\longrightarrow\,
V_n$ be the blow-up map. Then $K_{\widetilde V_n}\,=\,\pi^*K_{V_n}+(n-2)Q_{n-1}$,
and hence $V_n$ is canonical and terminal if $n\,\ge\, 3$. 

If $n=3$, small resolutions exist. It can be seen in many different ways. The quickest way is to rewrite $V_3\simeq \{xy=zt\} \subset \mathbb C^4$ and consider the graph $\widehat V_3 \subset V_3 \times \mathbb P^1$ of the meromorphic function $\frac xz=\frac ty$. It can be described explicitly as 
\[\widehat V_3 \,:=\,\{((x,\,y,\,z,\,t),\,[u:v])\,\in\, \mathbb C^4\times \mathbb P^1\,\,
\big\vert\,\, xy\,=\,zt,\, xv\,=\,zu,\,yv\,=\,tu\},\]
and the exceptional locus is simply $\{0\}\times \mathbb P^1$. Choosing the meromorphic function $\frac xt=\frac zy$ would have provided another small resolution. One recognizes the blow up of the two Weil (non Cartier) divisors $(x=z=0)$ and $(y=t=0)$. 

Alternatively, consider the blow up of the origin $\widetilde V_3$. It is isomorphic to the restriction $\mathbb 
L|_{Q_2}$ of the total space $\mathbb L$ of $\mathcal O_{\mathbb P^3}(-1)$ to the projective quadric $Q_2\simeq 
\mathbb P^1\times \mathbb P^1$. The normal bundle of the zero section $\mathbb P^3 \subset \mathbb L$ is 
isomorphic to $\mathcal O_{\mathbb P^3}(-1)$ hence the normal bundle of $Q_2\subset \widetilde V_3$ is isomorphic 
to $\mathcal O_{Q_2}(-1)$; therefore, its restriction to any ruling of the quadric is isomorphic to $\mathcal 
O_{\mathbb P^1}(-1)$. By the Nakano-Fujiki criterion, one can contract either family of $\mathbb P^1$s to a smooth 
space $\widehat V^3$, hence get a small resolution with exceptional locus isomorphic to $\mathbb P^1$. Contracting 
the other family of $\mathbb P^1$s provides another small resolution.

If $f\,:\,X\,\overset{2:1}{\longrightarrow}\, \mathbb P^3$ is a double cover of the projective
space ramified over a surface $B$, then $K_{X}\,=\,f^*(K_{\mathbb P^3}+\frac 12 B)$ is trivial
if and only if $\mathrm{deg}(B)\,=\,8$. In order to create singular examples, one will
consider octic surfaces $B$ with $s$ isolated singularities locally isomorphic
to $V_2$. The resulting $X$ will then have $s$ isolated singularities locally
isomorphic to $V_3$, which we can resolve by the procedure explained above, which glues
globally. We therefore get a small resolution $\pi\,:\,\widehat X\,\longrightarrow\, X$
with trivial canonical bundle (actually there are $2^s$ such resolutions). Such
varieties $X$ and $\widehat X$ have been extensively studied by Clemens \cite{Cle}. 

We claim that such an $\widehat X$ is an ICY threefold. Indeed, $\widehat X$ is simply connected \cite[Corollary~1.19]{Cle}, and for $p\in \{1,2\}$, one has successively
\begin{align*}
H^0(\widehat X,\, \Omega^p_{\widehat X})\,&\simeq\, H^0(X,\, \Omega_X^{[p]}) \\
\,&\simeq\, H^p(X,\,\mathcal O_X) \\
\,&\,\simeq H^p(\mathbb P^3,\, f_*\mathcal O_X)\\
\,& \simeq\, H^p(\mathbb P^3,\, \mathcal O_{\mathbb P^3}\oplus \mathcal O_{\mathbb P^3}(-4))
\end{align*}
which is indeed zero. The first two identities are consequences of \cite[Corollary~1.7]{KS} since $X$ has rational singularities; the third one comes from Leray spectral sequence given that $R^qf_*\mathcal 
O_X\,=\,0$ if $q\,>\,0$, since $f$ is finite. The fourth one is a classic result for cyclic covers, see for example 
\cite[2.50]{KM}. Alternatively, if one does not want to rely on the simple connectedness of $\widehat X$, 
one can use the decomposition theorem for $X$ \cite{HP2}. Indeed, since $X$ has isolated non quotient 
singularity, no finite quasi-\'etale cover of $X$ can split a torus; hence $X$ is irreducible. Since it is of 
odd dimension, it must be an ICY variety, hence so is $\widehat X$ which follows from
\cite[Theorem~1.4]{GKKP}, given 
that quasi-\'etale covers of $X$ are in one-to-one correspondence with \'etale covers of $\widehat X$, since 
$\widehat X \,\longrightarrow\, X$ is small.

Finally, one can find examples of such manifolds $\widehat X$ that are not projective algebraic, or equivalently K\"ahler. A great reference for that topic, which includes a survey of \cite{Cle} is the PhD thesis of J. Werner that has recently been translated into English in \cite{W}. Given any $B$ as before, it follows from Clemens results \cite[Theorem~4.3]{W} that one can find {\it one} small resolution $\widehat X\,\longrightarrow\, X$ such that the exceptional curves $C_1, \ldots, C_s$ satisfy a non-trivial relation $\sum_{i=1}^s m_i [C_i]=0$ in $H_2(\widehat X,\, \mathbb Q)$ with $m_i\,\ge\, 0$, which prevents $\widehat X$ from being K\"ahler. More precisely, the result cited guarantees that there exists a non-trivial relation between the $[C_i]$ without the information on the sign of the $m_i$, but then changing the small resolution of each ordinary double point has the effect of switching the sign of $m_i$. Now, by choosing $B$ to be certain specific Chmutov octic, one can see that {\it no} small resolution of the resulting double solid $X$ is K\"ahler, see \cite[\textsection\,6]{W}. \\

{\bf Irreducible holomorphic symplectic manifolds.}
The key construction here is the so-called Mukai flop. Let us briefly recall what it is and refer to e.g. \cite[Example~21.7]{GHJ} for more details. Let $X$ be an $2n$-dimensional K\"ahler (irreducible) holomorphic symplectic manifold containing a submanifold $P\subset X$ isomorphic to $\mathbb P^n$. The existence of a holomorphic symplectic form imposes $N_{P|X}\simeq \Omega_P^1$ and one can then see that the projective bundle $\mathbb P(\Omega_P^1)\,\longrightarrow\, P$ is isomorphic to the first projection of the incidence variety $D\subset \mathbb P^n \times (\mathbb P^n)^\vee$; i.e., $D\,=\,\{(x,\,H)\,\,\big\vert\,\, x\,\in\, H\}$. In particular, if $Z\,\longrightarrow\, X$ is the blow-up of $P$, then the exceptional divisor is isomorphic to $D$ and, moreover, the restriction of the normal $N_{D|Z}$ to any fiber of $D\,\longrightarrow\,
 (\mathbb P^n)^\vee$ is isomorphic to $\mathcal O_{\mathbb P^{n-1}}(-1)$ as one can see by using the adjunction formula twice (once for $D\subset \mathbb P^n \times (\mathbb P^n)^\vee$ and once for $D\subset Z$) . Using again Nakano-Fujiki's criterion, one can contract all such fibers in $Z$ to a obtain a smooth manifold $X'$
\begin{equation*}
\begin{tikzcd}
& \ar[dl,"p", swap]Z \ar[dr, "q"]& \\
X \ar[rr,dotted, "\phi"]& & X'
\end{tikzcd}
\end{equation*}
 which can then easily be shown to be holomorphic symplectic. 

The important property of the Mukai flop $X'$ of $X$ is as follows. Set $P'\,:=\,q(D)\,\simeq\, \mathbb P^n$. For dimensional reasons, there exists an isomorphism $H^2(X,\,\mathbb R)\,\longrightarrow\, H^2(X',\, \mathbb R)$. Let $\omega\in H^2(X,\,\mathbb R)$ and let $\omega'\,\in\, H^2(X',\, \mathbb R)$ be the corresponding class. We have $p^*\omega=q^*\omega'+a[D]$ for some $a\in \mathbb R$. Now, if $\ell \subset P$ and $H\subset P$ is an hyperplane containing $\ell$, then $\widetilde \ell:=\ell\times\{ H\}\subset D$ is a curve such that $p_*\widetilde \ell= \ell, q_*\widetilde \ell=0$ as cycles and $\mathcal O_Z(D)|_{\widetilde \ell} \simeq \mathcal O_{\mathbb P^1}(-1)$. In particular, $\omega \cdot \ell
\,=\, p^*\omega \cdot \widetilde \ell \,=\, -a$. One can do the same operation with a line $\ell'
\,\subset\, P'$ and find $\omega' \cdot \ell '\, =\,a$ so that 
\begin{equation}
\label{flop}
\omega \cdot \ell \,=\, -\omega' \cdot \ell'.
\end{equation}

We now give a general construction of non-K\"ahler IHS manifolds obtained as the Mukai flop of certain 
projective IHS manifolds as described in \cite[Example~21.9]{GHJ}. Assume that $X$ admits two {\it disjoint} 
submanifolds $P_1,\, P_2 \,\simeq\, \mathbb P^n$ and let $X'$ be the Mukai flop of $P_1$. The map 
$X\,\dashrightarrow \,X'$ is therefore isomorphic near $P_2$. Assume further that the Picard number of $X$ is 
two and that there exists a non-trivial morphism $f:X\,\longrightarrow\,
 Y$ to a projective variety $Y$ contracting both $P_1$ 
and $P_2$. If $H$ is an ample line bundle on $Y$ and $\ell_i$ is a line on $P_i$, then $\ell_i \in N_1(X)$ lies in the hyperplane $(f^*H)^{\perp}\subset N_1(X)_{\R}$. Since $\dim_\R N_1(X)_{\R}=2$, the latter is a line, hence $\mathbb R \ell_1\,=\, \mathbb R \ell_2\,\in\, H^{4n-2}(X,\,\mathbb R)$. If $\alpha$ is a K\"ahler class on $X$, we have $\alpha \cdot \ell_i>0$ for $i\in 
\{1,2\}$, hence
\begin{equation}
\label{positive}
\ell_1 \,=\, \lambda \ell_2 \quad \mbox{ for some } \, \lambda >0.
\end{equation}
If $X'$ were to admit a K\"ahler class $\omega'$, then the corresponding class $\omega\in H^2(X,\,\mathbb R)$ would satisfy
\[\omega \cdot \ell_1\,=\, -\omega' \cdot \ell'_1\,<\,0 \quad \mbox{and} \quad\omega \cdot \ell_2
\,=\,\omega'\cdot \ell_2 \,>\,0\]
(as follows from \eqref{flop}), which contradicts \eqref{positive}. Hence $X'$ cannot be K\"ahler. Explicit examples of IHS manifolds $X$ satisfying the requirements have been constructed by Yoshioka and
Namikawa; see references in \cite[Example~21.9]{GHJ}.

\subsection{The decomposition theorem}

We would like to propose the following statement on the structure of Fujiki manifolds with vanishing first 
Chern class, generalizing the well-known Beauville--Bogomolov decomposition theorem.

\begin{conj}[{Decomposition Conjecture}]
\label{conj structure}
Let $X$ be a compact Fujiki manifold such that $c_1(X)\,=\,0\,\in\, {\rm H}^2(X, \,\R)$. Then there exists a finite \'etale cover $X'
\,\longrightarrow\, X$ such that
\[X'\,\,\simeq\,\,T\times \prod_{i\in I} Y_i\ \times \prod_{j\in J} Z_j\]
where
\begin{enumerate}[label= $(\roman*)$]
\item $T$ is a complex torus,
\item For all $i\in I$, $Y_i$ an irreducible Calabi--Yau manifold,
\item For all $j\in J$, $Z_j$ is an irreducible holomorphic symplectic manifold.
\end{enumerate}
Moreover, there exist bimeromorphic maps $Y_i \dashrightarrow \widehat Y_i$ (respectively,
$Z_j \dashrightarrow \widehat Z_j$), isomorphic in codimension one, such that $ \widehat Y_i$ is a projective ICY variety with terminal singularities (respectively, $Z_j$ is a K\"ahler IHS variety with terminal singularities). 
\end{conj}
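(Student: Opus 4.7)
The plan is to reduce Conjecture~\ref{conj structure} to the singular K\"ahler setting, where the Beauville--Bogomolov type decomposition theorems of \cite{HP2, BGL} are available, and then transfer the product structure back to the smooth manifold $X$ via foliation-theoretic arguments, following the outline of Theorem~\ref{thma}.

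First, by Varouchas, $X$ is bimeromorphic to a compact K\"ahler manifold $\widetilde X$. Since the Fujiki property and $c_1=0\in H^2(-,\R)$ are bimeromorphic invariants, $\widetilde X$ still has numerically trivial canonical bundle; by \cite[Theorem~1.5]{To}, $K_{\widetilde X}$ is even torsion. The first main step is to run the $K_{\widetilde X}$-MMP in the K\"ahler category in order to produce a minimal model $X_{\min}$. A standard application of the negativity lemma then shows that $K_{X_{\min}}$ is still torsion, so $X_{\min}$ is a K\"ahler variety with terminal singularities and numerically trivial canonical class. The decomposition theorem of \cite{HP2, BGL} then produces a finite quasi-\'etale cover of $X_{\min}$ which splits as a product of a complex torus, ICY varieties and IHS varieties (all with terminal singularities); by the Kodaira argument recalled after Definition~\ref{def ICYIHS}, the ICY factors are automatically projective. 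This essentially delivers the ``moreover'' part of the conjecture, with the $\widehat Y_i,\widehat Z_j$ being the factors of this singular decomposition.

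Next, one needs to transfer this product structure back to $X$ itself. Starting from the quasi-\'etale splitting of $X_{\min}$, one pulls back the decomposition of the reflexive tangent sheaf to a decomposition of $TX$ along the corresponding finite \'etale cover $X'\longrightarrow X$. Using polystability of $TX$ with respect to some movable class (cf.\ Theorem~\ref{polystability}) together with the Bochner principle, one shows that each summand is involutive and defines an everywhere regular holomorphic foliation. The holomorphic symplectic form on each IHS factor of $X_{\min}$, the trivial canonical form on the ICY factors, and the translation-invariant volume form on the torus factor provide enough parallel tensors so that the leaves of each foliation are compact. Reeb stability and a Barlet cycle-space argument then identify $X'$ with the product of the resulting leaf spaces, realizing the desired splitting into smooth factors (smoothness coming from that of $X'$).

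The main obstacle is the very first step: constructing the K\"ahler minimal model $X_{\min}$ of $\widetilde X$. The MMP is known in full in the projective case \cite{BCHM, Dru11}, but in the K\"ahler case it has only been established up to dimension four \cite{HP1, DHP}. For general K\"ahler manifolds with numerically trivial canonical bundle (equivalently, of numerical dimension zero), the existence of a minimal model is a widely open problem, and this is precisely the gap that prevents the strategy above from upgrading Theorem~\ref{thma} to the full conjecture. Once such minimal models are available, the second and third steps should go through essentially as in the Moishezon and low-dimensional cases already treated.
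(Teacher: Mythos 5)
Your outline reproduces, essentially step for step, the strategy the paper itself uses to prove this statement in the cases where it can (Theorem~\ref{polystability}): pass to a K\"ahler/projective bimeromorphic model, run the MMP, use the negativity lemma to see that the resulting map is an isomorphism in codimension one with torsion canonical class, apply the singular decomposition theorems of \cite{HP2,BGL}, and transfer the splitting back via regular foliations with compact leaves, Reeb stability, and the Barlet space. You also correctly identify the one genuine obstruction — the unavailability of K\"ahler minimal models for numerical dimension zero beyond dimension four — which is exactly why the paper leaves the general statement as a conjecture and only establishes it for $X$ Moishezon or $\dim X\le 4$; so your proposal is an accurate account of the state of the art rather than a proof, in agreement with the paper's own presentation.
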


The goal of this section is to establish the above conjecture assuming either that $X$ is Moishezon or that it 
satisfies $\dim X\,\le\, 4$, which will enable us to refine the statements obtained in 
\textsection~\ref{semistability} above.

\begin{theorem}\label{polystability}
Let $X$ be a Fujiki manifold such that $0\,=\, c_1(X)\,\in\, {\rm H}^2(X, \,\R)$. Assume that one of the following holds:
\begin{enumerate}[label= $\bullet$]
\item $X$ is Moishezon, or
\item $\dim X \,\le\, 4$.
\end{enumerate}
Then the Decomposition Conjecture holds for $X$.

\noindent
Moreover, there exists a Zariski open set $U\,\subset\, X$,
whose complement $X\setminus U$ has codimension at least two, satisfying the
condition that there is an
incomplete Ricci-flat K\"ahler metric $\omega$ on $U$ for which the following two hold:
\begin{enumerate}[label= $(\roman*)$]
\item (Bochner principle).\, For every holomorphic tensor $\sigma \,\in\, {\rm H}^0(X,\, TX^{\otimes p} \otimes T^*X^{\otimes q})$
the restriction $\sigma\big\vert_U$ is parallel with respect to $\omega$.

\item (Polystability). There exists a subset $\Lambda \,\subset\, \mathrm{Mov}(X)$ with non-empty
interior such that $TX$ is polystable
with respect to any $\lambda \,\in\, \Lambda$. More precisely, there is a holomorphic decomposition of the tangent bundle
\begin{equation}\label{ed}
TX\,\,=\,\,\bigoplus_{i=1}^\ell F_i
\end{equation} 
satisfying the conditions that $F_i$ is locally free, $$c_1(F_i)\,=\,0\,\in \,{\rm H}^2(X, \,\R)$$
and $F_i$ is stable with respect to any $\lambda \,\in\, \Lambda$.
Additionally, over the subset $U$, the decomposition in \eqref{ed}
is orthogonal and $F_i\big\vert_{U}$ is parallel with respect to $\omega$. Finally, one can take $\Lambda\supset \mathrm{Mov}_{\rm NS}(X)^\circ$ if $X$ is Moishezon.
\end{enumerate}
\end{theorem}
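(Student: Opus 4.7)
The plan is to reduce the problem to a singular minimal model in the appropriate category and invoke the decomposition theorem of \cite{HP2, BGL}. By Varouchas' theorem, there exists a K\"ahler (projective, if $X$ is Moishezon) bimeromorphic modification $f\,:\, X_0 \,\longrightarrow\, X$. Since $X$ is smooth with $K_X \,\equiv\, 0$, the negativity lemma yields $K_{X_0} \,=\, E$ with $E$ effective and $f$-exceptional. Running the MMP on $X_0$ --- available in the projective case by \cite{BCHM, Dru11} and in the K\"ahler case with $\dim \,\le\, 4$ by \cite{HP1, DHP} --- produces a minimal model $X_{\min}$ with terminal singularities. Every divisor contracted or flipped during this MMP must lie in the support of $E$, so the induced bimeromorphic map $X \,\dashrightarrow\, X_{\min}$ is an isomorphism in codimension one, and pushing forward $E$ gives $K_{X_{\min}} \,\equiv\, 0$; hence $K_{X_{\min}}$ is torsion.

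After a finite \'etale cover of $X$ --- lifted from a finite quasi-\'etale cover of $X_{\min}$ trivializing $K_{X_{\min}}$, via Grothendieck--Zariski--Nagata purity applied on the codimension-one open subset where $X \,\simeq\, X_{\min}$ --- I will apply the singular decomposition theorem (\cite{HP2} in the projective case, \cite{BGL} in the K\"ahler case with $\dim \,\le\, 4$). Up to one more such cover, this provides a splitting $X_{\min} \,\simeq\, T \times \prod_i \widehat Y_i \times \prod_j \widehat Z_j$ together with a compatible decomposition of the reflexive tangent sheaf. A singular Ricci-flat K\"ahler metric on $X_{\min}$ restricts to a smooth K\"ahler metric $\omega_{\min}$ on $X_{\min,\,{\rm reg}}$ for which the sheaf decomposition is parallel and orthogonal, and the Bochner principle for reflexive tensors on $X_{\min}$ holds by \cite{GGK, CGGN}. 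Let $U \,\subset\, X$ and $V \,\subset\, X_{\min,\,{\rm reg}}$ be Zariski open subsets, both with complement of codimension $\ge 2$, on which the bimeromorphic map restricts to a biholomorphism. Pulling back $\omega_{\min}$ to $U$ yields the desired incomplete Ricci-flat metric $\omega$, and extending the summands of the tangent sheaf decomposition by their reflexive hulls inside $TX$ produces the splitting $TX \,=\, \bigoplus_i F_i$ with each $F_i$ locally free and $c_1(F_i) \,=\, 0$. The Bochner principle and the parallel and orthogonal properties then transfer back to $X$ restricted to $U$.

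Each $F_i$ is involutive on $U$, and hence on all of $X$ since $TX/F_i$ is torsion-free, defining a regular holomorphic foliation whose leaves inside $U$ are compact (being factors of a local product structure on $V$). The holomorphic Reeb stability theorem combined with the properness of the Barlet space of cycles on $X$ will promote each foliation to a holomorphic fibration $\pi_i\,:\, X \,\longrightarrow\, B_i$; combining these projections and passing to a further finite \'etale cover then yields the expected decomposition $X' \,\simeq\, T \times \prod_i Y_i \times \prod_j Z_j$, with the ICY/IHS nature of each factor inherited from the bimeromorphic model $X_{\min}$ via Definition~\ref{def ICYIHS}. Polystability of $TX$ with respect to a non-empty open subset $\Lambda \,\subset\, \mathrm{Mov}(X)$ then follows from the splitting and standard slope estimates using the parallel structure on $U$; in the Moishezon case, Proposition~\ref{prepop} shows that one may take $\Lambda \,\supset\, \mathrm{Mov}_{\rm NS}(X)^\circ$. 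The main obstacle I anticipate is this last step: upgrading the decomposition of $TX$ into regular foliations on the smooth $X$ to a genuine product structure up to finite \'etale cover demands a careful use of holomorphic Reeb stability and the Barlet cycle space to produce the fibrations $\pi_i$, together with a global argument ensuring that a single finite \'etale cover trivializes all of them simultaneously.
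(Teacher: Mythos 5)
Your proposal follows essentially the same route as the paper: pass to a terminal minimal model $X_{\rm min}$ via the MMP, show the induced map $X\dashrightarrow X_{\rm min}$ is an isomorphism in codimension one with $K_{X_{\rm min}}$ torsion, pull back the singular Ricci-flat metric and the parallel decomposition of $T_{X_{\rm min}}$ from \cite{GGK, CGGN, HP2, BGL} to get the Bochner principle and polystability on a codimension-two-complement open set $U$, and then globalize the resulting regular foliations into a product via holomorphic Reeb stability and the Barlet cycle space. This is exactly the paper's Steps 1--4, and your flagged ``main obstacle'' (compactness of all leaves, finiteness of holonomy, and the product map to $C_{\cF}\times C_{\cG}$ being an isomorphism) is precisely where the paper spends its effort, using a volume bound plus Bishop's theorem and Zariski's main theorem.

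One step in your write-up has a gap as stated. You argue that the divisors contracted or flipped by the MMP lie in $\mathrm{Supp}(E)$ and conclude that $X\dashrightarrow X_{\rm min}$ is an isomorphism in codimension one. That only gives one direction: no divisor coming from $X$ is contracted. You still need the converse, namely that \emph{every} $f$-exceptional divisor of $X_0\to X$ is contracted by the MMP; otherwise $X_{\rm min}$ carries a divisor lying over a codimension-two locus of $X$, and moreover your subsequent claim that pushing forward $E$ gives $K_{X_{\rm min}}\equiv 0$ presupposes exactly this (the pushforward of $E$ is an effective nef divisor, and you need it to vanish). The paper closes both directions at once by taking a common resolution $p\colon Z\to X$, $q\colon Z\to X_{\rm min}$, writing $D:=q^*K_{X_{\rm min}}-p^*K_X=\sum a_iE_i-\sum b_jF_j$ with all discrepancies positive (both spaces being terminal), and applying the negativity lemma twice to get $D=0$, whence $K_{X_{\rm min}}$ is torsion and the two exceptional loci coincide. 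You should substitute this symmetric argument for your one-sided divisor-tracking.
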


\begin{proof}
We proceed in several steps. 

{\bf Step 1. Existence of a good minimal model. }

Let $X' \,\longrightarrow\, X$ be a modification such that $X'$ is K\"ahler. 
Since $K_X$ is torsion by \cite[Theorem 1.5]{To}, we know that $\kappa (X')\,=\,0$ and the numerical dimension ${\rm nd}(K_{X'})\,=\,0$. We have two cases: 
\begin{enumerate}[label=$\circ$]
\item If $X'$ is projective (i.e., if $X$ is Moishezon), then we can apply
\cite{BCHM} and in particular \cite[Corollary 3.4]{Dru11} to find a birational model $X'\dashrightarrow X_{\rm min}$ with terminal singularities and torsion canonical bundle $K_{X_{\rm min}}$.
\item If $X'$ is merely K\"ahler but $\dim X \le 4$, then if follows from \cite[Theorem~1.1]{HP1} in dimension three and \cite[Theorem~1.1]{DHP} in dimension four that there exists a bimeromorphic model $X'\dashrightarrow X_{\rm min}$ with terminal singularities and nef canonical bundle $K_{X_{\rm min}}$.
\end{enumerate}

\medskip
 Let us resolve the indeterminacies of $\phi\,:\, X\,\dashrightarrow\, X_{\rm min}$ as follows 
\begin{equation}\label{ep}
\begin{tikzcd}
& \ar[dl,"p", swap]Z \ar[dr, "q"]& \\
X \ar[rr,dashed, "\phi"]& & X_{\rm min}
\end{tikzcd}
\end{equation}
where $Z$ is smooth K\"ahler. 

We claim that the map $\phi$ is an isomorphism in codimension one and that $K_{X_{\rm min}}$ is torsion. Indeed, write
\[K_Z \,= \,p ^* K_X + \sum a_i E_i \ \ \,\, \text{ and }\ \ \,\, K_Z \,=\, q ^* K_{X_{\rm min}} + \sum b_j F _j ,\]
where $\sum E_i$ (respectively, $\sum F_j$) is the exceptional locus of $p$ (respectively, $q$). As $X$ and $X_{\rm min}$ are terminal, we
have $a_i,\, b_j \,>\,0$ for all indices $i,\,j$. Set $D\,:=\,\sum a_i E_i-\sum b_j F_j\,=\,
q^*K_{X_{\rm min}}-p^*K_X$. Then $-(-D)=D$ is $p$-nef and $p_*(-D)\,\ge\, 0$, so that by the negativity
lemma (see e.g. \cite[Lemma~1.3]{Wan}), we actually have $-D\,\ge\, 0$, i.e., $D\,\le\, 0$. Similarly, $-D$ is $q$-nef and $q_*(D)
\,\ge\, 0$ so $D\,\ge\, 0$. All in all, $D\,=\,0$. This implies that $q^*K_{X_{\rm min}}\,=\,
p^*K_X$ is torsion, hence so is $K_{X_{\rm min}}$. Moreover, since the coefficients $a_i,\,b_j$ are positive, then we
also get $\sum E_i\,=\,\sum F_j$ which implies that $\phi$ is an isomorphism in codimension one. Our claim follows. 

In the following, we denote by $U\,\subset\, X$ the maximal Zariski open subset over which $\phi$
is defined and is an isomorphism. By the above, 
the codimensions of $X\setminus\ U$ and $X_{\rm min} \setminus \phi(U)$ are at least two.

\medskip

{\bf Step 2. Bochner principle. }

Let $\alpha\in H^2(X,\,\R)$ be K\"ahler class on $X_{\rm min}$. By \cite{EGZ, Pa}, there exists a positive current 
$$\omega_{\rm min}\,\in\, \alpha$$ with bounded potentials on $X_{\rm min}$ whose 
restriction to the regular locus $X_{\rm min}^{\rm reg}$ of $X_{\rm min}$ is a genuine K\"ahler 
Ricci flat metric with finite volume equal to $\alpha^n$. Define
$$\omega\,:=\,\phi^*(\omega_{\rm min}\big\vert_{\phi(U)}),$$
where $\phi$ is as in \eqref{ep}; it is a K\"ahler Ricci flat metric on $U$. Since $\int_U \omega^n
\,<\,+\infty$, the K\"ahler manifold $(U,\omega)$ is incomplete unless $X\,=\,U$ is already K\"ahler (see \cite[Proposition~4.2]{GGK}).

By \cite[Theorem~A]{CGGN} every reflexive holomorphic tensor on $X_{\rm min}^{\rm reg}$
is parallel with respect to $\omega_{\rm min}$. Since $U$ and $\phi(U)$ have complements of codimension at least two, there are
natural isomorphisms
\[{\rm H}^0(U,\, TX^{\otimes a} \otimes T^*X^{\otimes b}) \,\simeq\, {\rm H}^0(X,\, TX^{\otimes a} \otimes T^*X^{\otimes b}) \]
and 
\[{\rm H}^0(\phi(U), \,TX_{\rm min}^{\otimes a} \otimes T^*X_{\rm min}^{\otimes b}) \,\simeq\,
{\rm H}^0(X_{\rm min}^{\rm reg},\, TX_{\rm min}^{\otimes a} \otimes T^*X_{\rm min}^{\otimes b})\]
for all integers $a,\,b \,\ge\, 0$. Statement $(i)$ follows immediately. 

\medskip

{\bf Step 3. Polystability of $TX$. }

The tangent bundle $TX_{\rm min}$ of $X_{\rm min}$ holomorphically decomposes as $$T_{X_{\rm 
min}}\,= \,\bigoplus_{i=1}^r F_i^{\rm min},$$ where each $F_i^{\rm min}$ is $\alpha$-stable with zero first Chern class 
and its restriction to the regular locus of $X_{\rm min}$ is a parallel subbundle with respect 
to $\omega_{\rm min}$ (see \cite[Proposition~D]{GGK} and \cite[Remark~3.5]{CGGN}). Using $\phi$, we get a similar 
holomorphic decomposition $$T_{X}\,= \,\bigoplus_{i=1}^r F_i$$ valid over $U$ for some parallel 
subbundles $F_i \,\subset\, T_U$ with respect to $\omega$. Taking the saturation of $F_i$ in 
$T_X$, we get reflexive subsheaves --- still denoted by $F_i$ --- and a homomorphism 
$\bigoplus_{i=1}^r F_i \,\longrightarrow\, T_X$ which is an isomorphism over $U$, hence it is an 
isomorphism everywhere.

Next, we check that $c_1(F_i)\,=\,0$. This is easy to see because 
\[p^*c_1(F_i)\,=\, q^*c_1(F_i^{\rm min})+\sum c_{ij} E_j\,=\, \sum c_{ij} E_j\]
for some $c_{ij}\,\in\, \mathbb Z$ (see \eqref{ep} for $p,\, q$). Recall that the divisors $E_j $ are exceptional for both
$p$ and $q$. In particular, for any $\gamma \,\in\, {\rm H}^{2n-2}(X)$ we 
have 
\[c_1(F_i) \cdot \gamma \,=\, p^*c_1(F_i) \cdot p^*\gamma\,=\,\sum c_{ij} \, E_j \cdot p^*\gamma \, = \, 0\] 
using the projection formula.

The same arguments show that $F_i$ is stable with respect to 
$$\beta\,:=\,p_*(q^*\alpha^{n-1})\,\in\, {\rm H}^{n-1, n-1}(X)\cap {\rm H}^{2n-2}(X,\, \R).$$ 
The class $\beta$ is movable for being a limit of the classes $p_*((q^*\alpha+\ep \gamma)^{n-1})$ as $\ep \,\to\, 0$, where
$\gamma \in H^2(Z,\,\R)$ is a fixed K\"ahler class. Finally, \cite[Remark~3.5]{GKP} shows that the set $\Lambda \,\subset\,
\mathrm{Mov}(X)$ of movable classes with respect to which $F_i$ is stable has non-empty interior for every $i\,=\,1, \,\ldots,\, r$. This 
together with the condition $c_1(F_i)\,=\,0$ implies that $TX$ is polystable with respect to any 
$\lambda\,\in\, \Lambda$.

It remains to see that if $X$ is Moishezon, then for any $\lambda \in \mathrm{Mov}_{\rm NS}(X)^\circ$ and 
any index $i$, $F_i$ is $\lambda$-stable. We know from Proposition~\ref{prepop}
that $F_i$ is $\lambda$-semistable.
Arguing by contradiction, one can consider the maximal destabilizing sheaf 
$G_i\subset F_i$. It is a proper, $\lambda$-stable subsheaf with $\mu_\lambda(G_i)=0$. Now, given any 
$\alpha \in {\rm N}_1(X)_{\mathbb R}$, we have $\lambda+\ep \alpha \in \mathrm{Mov}_{\rm NS}(X)$ for $0 \le 
\ep \ll 1$, hence $\mu_{\lambda+\ep \alpha}(G_i) \le 0$ by Proposition~\ref{prepop}. This implies that 
$c_1(G_i) \cdot \alpha \le 0$. Since this holds for any $\alpha \in {\rm N}_1(X)_{\mathbb R}$, we deduce 
that $c_1(G_i)=0 \in {\rm N}^1(X)_{\mathbb R}$. This is a contradiction with the fact that $F_i$ is stable 
with respect to some movable class and satisfies $c_1(F_i)=0$, hence it shows the claim.

\medskip

{\bf Step 4. The splitting of $X$. }

The validity of Bochner principle implies that the Albanese map of our manifold $X$ is \'etale trivial (see proof of 
\cite[Theorem~4.1]{CGGN} or Theorem~\ref{splittorus1} below). This implies that a finite \'etale cover $X'\,\longrightarrow\, X$ splits 
as a product $X'\,\simeq \,T\times Y$ where $T$ is a complex torus and $Y$ is a compact Fujiki manifold $Y$ such that $K_Y$ is trivial 
and $H^0(Y',\, \Omega_{Y'}^1)\,=\,0$ for any finite \'etale cover $Y'\,\longrightarrow\, Y$. Up to replacing $X$ with $Y$, one can 
assume that the augmented irregularity of $X$ vanishes. In particular, the augmented irregularity of $X_{\rm min}$ vanishes as well thanks to Remark \ref{aug irr codim 1}. Using the decomposition theorem for $X_{\rm min}$ (see \cite{HP2} or \cite{BGL}), one can find a finite quasi-\'etale cover $g\,:\,X'_{\rm min}\,\longrightarrow\, X_{\rm min}$ which 
further decomposes as a product \[X'_{\rm min}\,\,\simeq\,\, \prod_{i\in I} Z_i\] where the $Z_i$ are irreducible singular ICY or IHS varieties in the sense of {\it loc. cit.} The absence of a torus factor comes from the equality $\widetilde q(X_{\rm min})=0$. 

Set $V\,:=\,\phi(U)\subset X_{\rm min}^{\rm reg}$. The restriction $g|_{g^{-1}(V)}$ is \'etale and induces 
via $\phi$ a finite \'etale cover $U'\,\longrightarrow\, U$ that extends to a finite cover 
$f\,:\,X'\,\longrightarrow\, X$ for some compact normal space $X'$ thanks to \cite[Theorem~3.4]{DeGr}. Since $f$ is \'etale in codimension one and $X$ is smooth, the purity 
of branch locus ensures that $f$ is \'etale everywhere hence $X'$ is smooth. We therefore have the following diagram
\begin{equation}
\label{finite cover}
\begin{tikzcd}
X'\ar[r,"\psi", dotted] \ar[d, "f", swap] & X'_{\rm min} \ar[d,"g"] \\
X \ar[r,dotted, "\phi"]& X_{\rm min}
\end{tikzcd}
\end{equation}
{}From now on, we replace $X$ with $X'$. By induction, one can assume that $X_{\rm min}
\,\simeq\, Y\times Z$ where $Y$ and $Z$ are compact K\"ahler with terminal singularities of
respective dimensions $k$ and $\ell$, not necessarily irreducible (i.e., they can be a
product of lower dimensional ICY or IHS varieties). On $X$, we have an induced splitting of
the tangent bundle into regular foliations $T_X\,=\, \cF\oplus \cG$. Indeed, since $\cF$
and $\cG$ are direct summand of $T_X$, they are automatically subbundles of $T_X$. Moreover,
the image of the composition of maps $$[\cF, \cF]\,\longrightarrow\, T_X\,\longrightarrow\,
T_X/\cF\,\simeq\, \cG$$ is supported on the proper subset $X\setminus U$ and hence it vanishes
since $\cG$ is torsion-free. This shows that $\cF$ is a regular foliation, and
we can argue similarly for $\cG$. 

Next, we will show that $\cF$ has smooth compact leaves with finite holonomy. It is clear 
that a general leaf is compact, hence smooth since $\cF$ is regular. Indeed, if $x\,\in\, U$, 
then the leaf $F_x$ of $\cF$ though $x$ is included in the strict transform by $\phi$ of the 
leaf $Y\times \{z\}$ where $\phi(x)\,=\,(y,\,z)$. Now we want to show that if $F$ is a general 
(compact) leaf of $\cF$, then the volume $\int_F \omega^k$ of $F$ with respect to a fixed 
hermitian metric $\omega$ is bounded uniformly independently of $F$, which will show that 
all leaves of $\cF$ are compact by Bishop theorem; smoothness will then follow from the 
regularity of $\cF$.

In order to show that the volume is bounded, let $W$ be a K\"ahler desingularization of the graph of $\phi$, with projections
$p\,:\,W\,\longrightarrow\, X$ and $q\,:\,W\,\longrightarrow\, Y\times Z$. Set $$r\,:=\,\mathrm{pr}_Z \circ q
\,:\, W\,\longrightarrow\, Z$$ so that the cycles $W_z\,:=\,r^{-1}(z)$ are
homologous (and smooth) for $z$ general. When $z\,\in\,
Z$ varies (say in a dense open subset of $Z$), the varieties $F_z\,:=\,
p(W_z)$ form a family of smooth leaves of $\cF$ sweeping out a dense open set of $X$. Moreover, $p$
induces a bimeromorphic map between $W_z$ and $F_z$. In particular, we have $\int_{F_z} \omega^k
\,=\,\int_{W_z}p^*\omega^k$. Pick a K\"ahler metric $\omega_W$ such that $\omega_W \,\ge\,
p^*\omega$. Then we have $\int_{F_z} \omega^k \,\le\, \int_{W_z} \omega_W^k$ which is independent
of $z$ (general) since the $W_z$ are homologous and $\omega_W$ is closed. 

Finiteness of the holonomy can be shown as follows. Let $x\,\in\, X$ and let $F_x$ be the leaf of $\cF$ though $x$. Choose a transversal $S\simeq \mathbb D^{\ell}$ to $\cF$ at $x$ (e.g. a neighborhood of $x$ in the leaf of $\cG$ through $x$). Choose a chart $U_x$ near $x$ given by the unit polydisk where the leaves of $\cF$ are given by the affine subspaces $V_a:=(z_1\,=\,a_1,\,
\ldots,\, z_{n-k} \,=\,a_{\ell})$, $a\,\in\, \mathbb D^{\ell}$, and where $\omega
\,\ge\, C^{-1} \omega_{\rm eucl}$. If a leaf $L$ of $F$ hits $S$ at a point $a$, it means that $L\cap U_x$ contains $V_a$; in particular, if $L$ hits $S$ at least $N$ times, then the volume of $L$ with respect to $\omega$ is at least $NC^{-k}k! \pi^k$ hence 
 $N\le \frac{C^k}{k!\pi^k} \int_{F} \omega^k$ and that quantity is bounded above independently of $F$. 

Using the holomorphic version of Reeb stability theorem (see for example \cite[Proposition~2.5]{HW}) it is concluded that 
$\cF$ induces a holomorphic map $X\,\longrightarrow\, \mathcal C_k(X)$, where $\mathcal C_k(X)$ is the Barlet space 
of (compact) cycles of $X$ of dimension $k$. The map is defined by associating to $x$ the cycle $|G_x|\cdot F_x$ 
where $F_x$ is the leaf of $\cF$ though $x$ and $G_x$ is the holonomy group of $F_x$, i.e., the image of 
$\pi_1(F_x)\,\longrightarrow\, \mathrm{Diff}(S,x)$ which is finite by what we explained above. We let $C_{\cF}$ be 
the image of the map $X\,\longrightarrow\,
\mathcal C_k(X)$. We now consider the product map \[f\,:\,X\,\longrightarrow\, C_{\cF}\times C_{\cG}.\]

We claim that $df_x$ is an isomorphism for $x$ general. Indeed, let $x\in X$ be arbitrary, 
let $S$ be a local transversal for $\cF$ at $x$ and let $G$ be the holonomy of $F_x$. The 
local description of the foliations provided by Reeb stability theorem ensures that one can 
find a saturated neighborhood $U_x$ of $x$ and a proper holomorphic map $\pi\,:\,U_x 
\,\longrightarrow\, S/G$ onto the $\ell$-dimensional orbifold $S/G$ such that $\cF 
\,=\,\ker(d\pi)$ holds generically over $S/G$. More precisely, there exists a Zariski open 
subset of $S/G$ over which $\pi$ is smooth so that its differential induces an isomorphism 
in restriction to $\cG$. Now, in restriction to $U_x$, the map $X\,\longrightarrow\, \mathcal C_k(X)$ is 
nothing but the composition $U_x\,\longrightarrow\, S/G\,\longrightarrow\, \mathcal C_k(X)$ where the first arrow is $\pi_x$ 
and the second one is obtained by definition of the Barlet space. We also see in this way 
that $C_\cF$ is a compact $\ell$-dimensional orbifold. Performing the same construction for 
the foliation $\cG$, we see that $f$ is \'etale over a generic point of $C_\cF\times C_\cG$.

As a consequence of the previous paragraph, $f(X)$ is compact of dimension $n$, hence $f$ is surjective. 
Since $\cF$ and $\cG$ are transverse with smooth compact leaves, a leaf of $\cF$ can intersect a leaf of $\cG$ only finitely many times, hence $f$ is finite. 
If the degree of $f$ were greater than one, we could find two points $x,x'\in U$ such that $f(x)=f(x')$. That is, the leaves $F_x$ and $G_x$ would intersect again at the point $x'$. This is a contradiction with the fact that over $U$ via $\phi$, $F_x$ and $G_x$ correspond to $Y\times \{z\}$ and $Z\times \{y\}$ where $\phi(x)=(y,z)$. In particular, their only intersection point on $Y\times Z$ is the point $(y,z)$. 
In conclusion, $f$ is finite and generically $1:1$. Since the source and target of $f$ are normal, Zariski's main theorem implies that $f$ is isomorphic.

It follows from the construction that $\cF$ is isomorphic to $\mathrm{pr}_1^*T_{C_{\cG}}$ (and similarly for $\cG$) via $f$. Moreover, we have seen that a (general) leaf of $\cF$, which isomorphic to a copy of $C_{\cG}$ via $f$, is bimeromorphic to a copy of $Y$ and that map is actually isomorphic in codimension one. After iterating the construction, one will split $X\simeq \prod_{j\in J} X_j$ where $X_j$ is bimeromorphic and isomorphic in codimension one to a possibly singular ICY or IHS K\"ahler variety.

It is then straightforward to check that this implies that $X_j$ itself is an ICY or IHS manifold in the sense of Definition~\ref{def ICYIHS}. The theorem is proved. 
\end{proof}

Using Theorem~\ref{polystability}, the results in \cite{GGK} and 
\cite{BGL} yield the following
 
\begin{corollary}
\label{fund group}
Let $X$ be a compact Fujiki manifold such that $c_1(X)=0\,\in\, {\rm H}^2(X,\, \R)$ and $\widetilde q(X)=0$. Assume either that $X$ is Moishezon or that $\dim X\,\le \,4$. 

Then $\pi_1(X)$ does not admit any finite-dimensional representation with infinite image (over any field). Moreover, for each $k\in \mathbb N$, $\pi_1(X)$ admits only finitely many $k$-dimensional complex representations up to conjugation. 
\end{corollary}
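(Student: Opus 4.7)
The plan is to combine the decomposition provided by Theorem~\ref{polystability} with the hypothesis $\widetilde q(X)=0$ in order to reduce the statement to the known finiteness results for fundamental groups of singular ICY and IHS varieties established in \cite{GGK} (projective case) and \cite{BGL} (K\"ahler case).

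First I would invoke Theorem~\ref{polystability} to obtain a finite \'etale cover $f\colon X'\longrightarrow X$ together with a product decomposition
\[
X'\simeq T\times \prod_{i\in I} Y_i\times \prod_{j\in J} Z_j,
\]
in which $T$ is a compact complex torus, each $Y_i$ is an ICY manifold, and each $Z_j$ is an IHS manifold. The assumption $\widetilde q(X)=0$ forces $q(X')=0$; since by definition $q(Y_i)=q(Z_j)=0$ for all $i,j$ (as follows from Definition~\ref{def ICYIHS}) while $q(T)=\dim T$, the torus factor $T$ must be a point. Moreover, both assertions of the corollary pass back and forth between $\pi_1(X)$ and its finite-index subgroup $\pi_1(X')$ by standard restriction/induction of linear representations. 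By the K\"unneth decomposition $\pi_1(X')\simeq \prod_i\pi_1(Y_i)\times \prod_j\pi_1(Z_j)$, it then suffices to establish the finiteness properties for each factor $Y_i$ or $Z_j$ individually.

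Next I would invoke the last sentence of Theorem~\ref{polystability}: each $Y_i$ (respectively $Z_j$) is bimeromorphic, via a map isomorphic in codimension one, to a projective ICY variety $\widehat Y_i$ (respectively K\"ahler IHS variety $\widehat Z_j$) with terminal singularities. The main results of \cite{GGK} and \cite{BGL} on linear representations of fundamental groups of such singular ICY/IHS varieties with vanishing augmented irregularity yield precisely the two required finiteness properties for $\pi_1(\widehat Y_i^{\rm reg})$ and $\pi_1(\widehat Z_j^{\rm reg})$: no finite-dimensional representation admits infinite image, and for each $k$ only finitely many $k$-dimensional complex representations exist up to conjugation.

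The main obstacle --- and the step requiring the most care --- is to transfer these properties from $\widehat Y_i$ to the smooth Fujiki model $Y_i$. The key observation is that because $Y_i$ is smooth and the bimeromorphic map $\phi\colon Y_i\dashrightarrow \widehat Y_i$ is an isomorphism in codimension one, there exist open subsets $Y_i^\circ\subset Y_i$ and $\widehat Y_i^\circ\subset \widehat Y_i$ with complements of complex codimension at least two, on which $\phi$ restricts to a biholomorphism $Y_i^\circ\simeq \widehat Y_i^\circ$. Since $Y_i^\circ$ is smooth, so is $\widehat Y_i^\circ$, hence $\widehat Y_i^\circ\subset \widehat Y_i^{\rm reg}$, and after shrinking if necessary we may assume that the complement of $\widehat Y_i^\circ$ in $\widehat Y_i^{\rm reg}$ also has codimension at least two. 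As removing an analytic subset of codimension at least two from a smooth complex manifold does not alter the fundamental group, this yields the chain of isomorphisms
\[
\pi_1(Y_i)\,\simeq\, \pi_1(Y_i^\circ)\,\simeq\, \pi_1(\widehat Y_i^\circ)\,\simeq\, \pi_1(\widehat Y_i^{\rm reg}),
\]
and analogously for $Z_j$. Combining this identification with the finiteness results quoted above completes the proof.
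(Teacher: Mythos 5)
Your argument is correct, but it takes a genuinely different route from the paper's. The paper never passes through the product decomposition at all: it only uses Step~1 of the proof of Theorem~\ref{polystability}, namely the existence of a bimeromorphic map $\phi\colon X\dashrightarrow X_{\rm min}$ to a K\"ahler klt minimal model that is an isomorphism in codimension one (which transfers the hypothesis $\widetilde q=0$ to $X_{\rm min}$), and then identifies $\pi_1(X)\simeq\pi_1(X_{\rm min})$ via a common resolution $Z$ and Takayama's theorem on fundamental groups of resolutions of klt singularities, after which \cite[Theorem~I]{GGK} and \cite[Theorem~A, Corollary~3.10]{BGL} apply directly to $X_{\rm min}$. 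You instead invoke the full decomposition, kill the torus factor using $\widetilde q(X)=0$, reduce to the individual factors, and replace Takayama's theorem by the elementary topological fact that removing a codimension-two analytic subset from a smooth manifold does not change $\pi_1$, landing on $\pi_1(\widehat Y_i^{\rm reg})$ --- which is indeed the group the cited theorems of \cite{GGK, BGL} control. Both approaches are valid; yours trades a lighter input (no Takayama) for a heavier one (the full splitting of $X'$, which is the hard part of Theorem~\ref{polystability}) and for two group-theoretic reductions you label ``standard'': that both finiteness properties pass from a finite-index subgroup to the ambient group, and from the factors of a direct product to the product. These are true --- once one knows every representation of the subgroup/factors has finite image, all $k$-dimensional representations of the bigger group factor through a single finite quotient --- but they deserve a sentence; note that the paper sidesteps them entirely because the GGK/BGL statements are formulated for an arbitrary klt variety with $\widetilde q=0$, not only for the irreducible building blocks, so no reduction to factors is needed there. (Also, ``induction'' of representations plays no role in your reduction; only restriction does.)
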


\begin{proof}
In view of the proof of Theorem~\ref{polystability}, there exists a bimeromorphic map $\phi\,:\,X\,\dashrightarrow\, X_{\rm 
min}$ where $X_{\rm min}$ is a K\"ahler variety with klt singularities, zero first Chern class, and zero augmented 
irregularity. The last property comes from the fact that $\phi$ is isomorphic in codimension one, hence there is a 
one-to-one correspondence between finite \'etale covers of $X$ and finite quasi-\'etale covers of $X_{\rm min}$.

If $Z$ is a desingularization of the graph of $\phi$, it follows from Takayama's result \cite{Ta} that the maps $p\,:\,Z\,\longrightarrow\,
X$ and $q\,:\,Z\,\longrightarrow\, X_{\rm min}$ induce isomorphic maps at the level of fundamental groups
\begin{equation}
\label{pi1 2}
p_*\,:\,\pi_1(Z) \,\overset{\simeq}{\longrightarrow}\, \pi_1(X) \quad \mbox{and} \quad q_*\,:\,\pi_1(Z)
\,\overset{\simeq}{\longrightarrow} \, \pi_1(X_{\rm min})
\end{equation}

The corollary is an now an application of \cite[Theorem~I]{GGK} in the projective case and the combination of Theorem~A and Corollary~3.10 in \cite{BGL} in the K\"ahler case.
\end{proof}

\begin{corollary}\label{c2 zero}
Let $X$ be a compact Fujiki manifold such that $0\,=\, c_1(X)\,\in\, {\rm H}^2(X,\, \R)$
and $0\,=\, c_2(X)\,\in\,
{\rm H}^4(X,\,\R)$. Assume either that $X$ is Moishezon or that $\dim X\,\le \,4$. Then $X$ admits a finite \'etale cover $T
\,\longrightarrow\, X$ where $T$ is complex torus. 
\end{corollary}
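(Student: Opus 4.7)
The strategy is to reduce directly to Theorem~\ref{polystability}. First I apply Theorem~\ref{polystability} to the Fujiki manifold $X$ (satisfying either hypothesis) to obtain a finite \'etale cover $f:X'\longrightarrow X$ together with a product decomposition
\[X'\simeq T\times \prod_{i\in I} Y_i \times \prod_{j\in J} Z_j,\]
where $T$ is a compact complex torus, each $Y_i$ is an ICY manifold, and each $Z_j$ is an IHS manifold. The problem reduces to showing that $I=J=\emptyset$, so that $X'\simeq T$.

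Next I transfer the vanishing of $c_2$ to each factor. Since $f$ is finite \'etale, $c_2(X')=f^*c_2(X)=0$ in $H^4(X',\R)$. Writing $\pi_\bullet$ for the projections onto the factors, the splitting $TX'=\pi_T^*TT\oplus \bigoplus_i \pi_{Y_i}^*TY_i\oplus \bigoplus_j \pi_{Z_j}^*TZ_j$, the fact that $c_2(T)=0$, and the vanishing of $c_1$ of every factor (which kills all cross-terms in the multiplicative Whitney formula) give
\[ 0=c_2(X')=\sum_{i\in I}\pi_{Y_i}^*c_2(Y_i)+\sum_{j\in J}\pi_{Z_j}^*c_2(Z_j).\]
Since these summands lie in distinct direct summands of the K\"unneth decomposition of $H^4(X',\R)$, each $c_2(Y_i)$ and each $c_2(Z_j)$ vanishes.

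Now I argue by contradiction that $I\cup J\neq\emptyset$ is impossible; take a factor $Y_i$ (the argument for $Z_j$ is identical). By the second part of Theorem~\ref{polystability}, $Y_i$ is bimeromorphic, through a map isomorphic in codimension one, to a K\"ahler ICY variety $\widehat Y_i$ with terminal (in particular klt) singularities. Via a common desingularization and the projection formula, the vanishing of $c_2(Y_i)$ in $H^4(Y_i,\R)$ passes to the vanishing of the reflexive (or orbifold) second Chern class on $\widehat Y_i$ paired against K\"ahler classes. I then invoke the singular analogue of Yau's characterization of torus quotients: a compact K\"ahler variety with klt singularities, numerically trivial canonical class, and vanishing second Chern class is a finite quasi-\'etale quotient of a complex torus (the projective case is \cite{GKP16}, and the K\"ahler singular case is addressed in \cite{CGG} and closely related works). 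But this forces the existence of a finite quasi-\'etale cover of $\widehat Y_i$ with strictly positive irregularity, contradicting the ICY property (which requires $H^0(\,\cdot\,,\Omega^1)=0$ after any finite quasi-\'etale cover). Thus $I=\emptyset$, and symmetrically $J=\emptyset$, yielding $X'\simeq T$.

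The main obstacle is the availability and correct formulation of the K\"ahler singular Yau-type theorem used in the last step, along with the careful definition and transfer of the reflexive Chern class through the bimeromorphic identification $Y_i\dashrightarrow \widehat Y_i$. In the Moishezon case an alternative and more direct route is available: one applies the Bogomolov-Gieseker inequality to the stable summands $F_i$ of the polystable decomposition of $TX$ from Theorem~\ref{polystability} (with respect to a class in $\mathrm{Mov}_{\rm NS}(X)^\circ$); combining $c_2(X)=0$ with $\sum c_2(F_i)\cdot \lambda^{n-2}=0$ and the non-negativity of each term shows each $F_i$ is projectively hermitian flat, hence hermitian flat since $c_1(F_i)=0$, and one concludes by invoking \cite{DPS94} directly on the resulting numerically flat tangent bundle.
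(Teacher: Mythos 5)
Your main argument is genuinely different from the paper's. The paper never uses the product decomposition of $X'$ at all: it takes the polystable splitting $TX=\bigoplus F_i$ from Theorem~\ref{polystability}, pulls it back to the smooth K\"ahler modification via $p$, upgrades stability of $p^*F_i$ from the movable class $q^*\alpha$ to an honest K\"ahler class $\theta$ by openness of stability, applies Bogomolov--Gieseker and Simpson's correspondence there to get that $p^*TX$ is hermitian flat, descends flatness to $TX$ across the codimension-two set $X\setminus U$, and concludes with \cite{DPS94}. Your ``alternative Moishezon route'' at the end is essentially this proof; note that it is not specific to the Moishezon case (the paper runs it under either hypothesis), and that the Bogomolov--Gieseker inequality should be applied with respect to a K\"ahler class on the modification rather than directly with respect to a class in $\mathrm{Mov}_{\rm NS}(X)^\circ$ --- the passage through the resolution and the openness of stability is exactly what makes that legitimate.

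Your primary route (decompose, transfer $c_2=0$ to each factor by K\"unneth, then rule out ICY/IHS factors via the singular uniformization theorems of \cite{GKP16} and \cite{CGG}) is attractive and the endgame contradiction with vanishing augmented irregularity is sound, but the step you yourself flag as the main obstacle is a genuine gap as written. A bimeromorphic map $Y_i\dashrightarrow \widehat Y_i$ that is an isomorphism in codimension one does \emph{not} automatically identify second Chern classes paired against $(n-2)$-fold powers of K\"ahler classes: on a common resolution the exceptional locus $E$ is a divisor mapping to codimension $\ge 2$ loci, so while $q^*\alpha^{n-1}\vert_E=0$, products such as $D_1\cdot D_2\cdot q^*\alpha^{n-2}$ with $D_1,D_2$ supported on $E$ need not vanish, and the comparison between $c_2(Y_i)$ and the orbifold class $\widehat c_2(\widehat Y_i)$ requires a real argument. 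The paper's route buys exactly the avoidance of this issue (and of any appeal to reflexive Chern classes on singular spaces) by staying on smooth K\"ahler models throughout; your route, if the transfer were carried out, would buy a more structural explanation via the decomposition, but at the cost of importing the full singular K\"ahler Yau-type theorem. Either complete the $c_2$ transfer carefully or adopt the Bogomolov--Gieseker/Simpson/DPS argument for both cases.
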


\begin{proof}
We use the notation from Theorem \ref{polystability}. Consider the vector bundle 
$$p^*TX\,=\,\bigoplus_{i=1}^r p^*F_i,$$ where each $p^*F_i$ has vanishing first Chern class. By the arguments in 
Theorem \ref{polystability}, each $p^*F_i$ is stable with respect $q^*\alpha$. Since 
stability is an open condition, $p^*F_i$ is also stable with respect to a K\"ahler class 
$\theta$ on $X'$. As the first Chern class of $p^*F_i$ vanishes, the Bogomolov--Gieseker's 
inequality says that \[c_2(p^*F_i) \cdot \theta^{n-2}\,\ge\, 0.\] As $c_2(p^*TX)\,=\,\sum 
c_2(p^*F_i)\,=\,0$, we conclude that $c_2(p^*F_i) \cdot \theta^{n-2} \,=\, 0$ for every $i$. By 
Simpson's correspondence, $p^*F_i$ is hermitian flat, and hence $p^*TX$
is hermitian flat. This implies that 
$TX\big\vert_U$ is unitary flat as well, and therefore using fact that the complement
$X\setminus U\, \subset\, X$ is of codimension at least two it follows that $TX$ too is
unitary flat. By \cite[Corollary~1.6]{Dem92}, $X$ is K\"ahler and it follows that $X$
admits a finite \'etale cover by a torus. 
\end{proof}

\begin{remark}
The proof of the corollary above shows that one can weaken the assumption $0\,=\, c_2(X)\,\in\, {\rm H}^4(X,\,\R)$ and
replace it by the existence of a modification $f\,:\,Y\,\longrightarrow\, X$ such that $Y$ admits a K\"ahler form $\omega$
satisfying $c_2(X) \cdot f_*[\omega^{n-2}]\,=\,0$.
 \end{remark}

\begin{remark}
A compact complex manifold in Fujiki class $\mathcal C$ bearing a holomorphic affine connection has vanishing Chern classes, see \cite[Theorem 4 
on p. 192--193]{At}. Therefore, Corollary~\ref{c2 zero} implies that any
compact complex Moishezon manifold admitting a holomorphic affine connection also admits an
\'etale covering by an abelian variety, and is actually projective.
\end{remark}

\subsection{\'Etale triviality of the Albanese map}

We now prove the \'etale triviality of the Albanese map for complex manifolds in Fujiki class $\mathcal C$ with numerically trivial 
canonical bundle, following the strategy of \cite[Lemma 6.1]{Fu} (see also \cite[Theorem~4.1]{CGGN}). 

\begin{theorem}\label{splittorus1}
Let $X$ be a compact complex manifold in Fujiki class $\mathcal C$ such that $c_1(X)\,=\,0\,\in\, {\rm H}^2(X,\, \R)$. Then
after taking some finite \'etale cover, there is a decomposition 
$$X\, =\,T \times Z,$$ 
where $T$ is a torus, and $\widetilde q(Z)=0$.
\end{theorem}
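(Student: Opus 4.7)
The plan is to follow the strategy indicated in \cite{Fu} and \cite{CGGN}. First I appeal to Tosatti's theorem \cite[Theorem~1.5]{To}, which asserts that on a Fujiki manifold with $c_1 = 0$ the canonical bundle $K_X$ is holomorphically torsion. Passing to a finite \'etale cover, I may therefore assume that $K_X \simeq \mathcal O_X$, and fix a trivializing holomorphic $n$-form $\Omega \in H^0(X, K_X)$. Since $X$ lies in Fujiki class $\mathcal C$, the $\ddc$-lemma holds, so holomorphic $1$-forms on $X$ are $d$-closed and Hodge symmetry yields $h^{1,0}(X) = h^{0,1}(X) =: q$. The Albanese variety $A := H^0(X, \Omega_X^1)^{*}/H_1(X, \mathbb{Z})_{\mathrm{tf}}$ is therefore a $q$-dimensional complex torus, and the Albanese morphism $\alpha \colon X \to A$ is a well-defined surjective holomorphic map satisfying the usual universal property with respect to maps from $X$ to complex tori.

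The key step --- and the one I expect to be the main obstacle --- is to prove that $\alpha$ is a holomorphic submersion at every point. Fixing a basis $\eta_1, \ldots, \eta_q$ of $H^0(X, \Omega_X^1)$, submersivity at a point $x$ is equivalent to the linear independence of $\eta_1(x), \ldots, \eta_q(x)$ in $T_x^{*} X$, or equivalently to the nonvanishing of $\sigma := \eta_1 \wedge \cdots \wedge \eta_q \in H^0(X, \Omega_X^q)$. My approach is to contract $\sigma$ with the trivializing $n$-vector field $\Omega^{-1} \in H^0(X, \bigwedge^{n} TX)$, obtaining a section $\iota_{\Omega^{-1}} \sigma \in H^0(X, \bigwedge^{n-q} TX)$, and then to use the nonvanishing of $\Omega$ together with the $d$-closedness of the $\eta_i$ to rule out zeros. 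Alternatively, one can compare $\alpha^{*}(\mathrm{vol}_A)$ with a smooth volume form derived from $\Omega \wedge \overline{\Omega}$ and apply a maximum-principle argument on the pointwise ratio to conclude that $\sigma$ is nowhere zero. Either route fundamentally relies on the trivialization of $K_X$ rather than merely on $c_1(X) = 0$.

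Once submersivity of $\alpha$ is established, adjunction together with the triviality of $K_X$ and $K_A$ forces $K_{X/A}$ to be trivial, so the fibers of $\alpha$ are compact complex manifolds with trivial canonical bundle. The pull-back $\alpha^{*} TA \subset TX$ is then a trivial rank-$q$ subbundle globally generated by sections lifted from $H^0(A, TA)$, and a standard integration argument --- combined with a further finite \'etale base change to trivialize the monodromy on the fiber cohomology, \`a la Fischer--Grauert --- yields a product decomposition $X' \simeq T \times Z$ for some complex torus $T$ and a compact Fujiki fiber $Z$. By construction, every holomorphic $1$-form on $X'$ is pulled back from the torus factor, so $h^{1,0}(Z) = 0$; and since this same argument applies to any finite \'etale cover $Z' \to Z$, which again has $c_1 = 0$ so that Tosatti's theorem remains applicable, we obtain $h^{1,0}(Z') = 0$ for every such $Z'$, completing the proof.
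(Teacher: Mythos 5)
Your reduction via Tosatti's theorem to the case $K_X\simeq\mathcal O_X$ matches the paper, but the step you yourself flag as the main obstacle --- proving that the Albanese map is everywhere submersive, i.e.\ that $\sigma=\eta_1\wedge\cdots\wedge\eta_q$ is nowhere zero --- is a genuine gap, and neither of the two routes you sketch closes it. Contracting $\sigma$ against $\Omega^{-1}$ produces a section of $\bigwedge^{n-q}TX$, and the $d$-closedness of the $\eta_i$ (which only guarantees that the Albanese map is well defined) gives no mechanism to exclude zeros of that section. The maximum-principle route is the Bochner argument in disguise: to make $|\sigma|^2$ subharmonic, hence constant, you need a Ricci-flat K\"ahler metric on $X$, and that is precisely what is unavailable here --- the paper stresses that no Bochner principle is known for general Fujiki manifolds with $c_1=0$ (this is one of the main difficulties the whole article is organized around). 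Moreover, if $q<n$ there is no natural pointwise ratio between the $(q,q)$-form $\alpha^*\mathrm{vol}_A$ and the $(n,n)$-form $\Omega\wedge\overline\Omega$, so the quantity you propose to apply the maximum principle to is not even defined. A secondary issue: even granting submersivity, $\alpha^*TA$ is a quotient of $TX$ via $d\alpha$, not a subbundle; to split off the torus factor you need to \emph{lift} vector fields from $A$ to $X$, which is again the hard point, not a formality.

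The paper's actual argument avoids any pointwise nonvanishing statement and instead runs through Fujiki's structure theory of $\mathrm{Aut}_0(X)$. One first checks $h^0(T_X)=h^0(\Omega_X)$ by Serre duality, Hodge symmetry and triviality of $K_X$. The differential $d\rho\colon H^0(X,T_X)\to H^0(X,\Omega_X)^*$ of the Jacobi homomorphism has kernel the Lie algebra $\mathfrak l$ of vector fields annihilated by all holomorphic $1$-forms; by Fujiki, $\mathfrak l\neq 0$ would force $X$ to be bimeromorphic to a unirational manifold, contradicting $h^0(K_X)\neq 0$. Hence $d\rho$ is an isomorphism, $\rho\colon\mathrm{Aut}_0(X)\to T(X)$ is a finite covering of the Albanese torus (its kernel is a linear algebraic group with trivial Lie algebra, hence finite), and the equivariance $\mathrm{alb}(\phi(x))=\mathrm{alb}(x)+p(\phi)$ exhibits $\mathrm{alb}$ as an \'etale-trivial fiber bundle after the base change by $p$. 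This is where the lifting of torus translations to automorphisms of $X$ --- the substitute for your missing subbundle $\alpha^*TA\subset TX$ --- comes from. You would need to import this automorphism-group argument (or supply a genuinely new proof of the Bochner principle in the Fujiki setting) to make your proposal work.
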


\begin{proof}
Since the canonical bundle $K_X$ is torsion \cite[Theorem 1.5]{To}, we can assume, by replacing $X$ with a finite unramified
covering of it, that $X$ is a compact complex manifold in the Fujiki class 
such that its canonical bundle $K_X$ is holomorphically trivial.

The key point is to prove that the canonical pairing on $X$
$$H^0 (X, \,T_X)\, \times H^0 (X, \,\Omega_X)\,\longrightarrow \,H^0 (X,\, \mathcal{O}_X) \,\simeq\, \mathbb C$$
given by the natural contraction is a perfect pairing (i.e., it is non-degenerate).
 We first prove that $H^0 (X, \,T_X)$ and $H^0 (X, \,\Omega_X)$ have the same complex dimension. This follows from Serre 
duality, Hodge symmetry and triviality of $K_X$:
$$h^{0}(T_X)\,=\,h^n(K_X\otimes \Omega_X)\,=\,h^n(\Omega_X)\,=\,h^1(K_X)\,=\,h^1(\mathcal O_X)\,=\,h^0(\Omega_X).$$

Let $\text{Aut}_0(X)$ be the connected component containing the identity element of the
group of holomorphic automorphisms of $X$. Its Lie algebra is $H^0 (X, \,T_X)$. Let
${\rm alb} \,:\, X \,\longrightarrow\, A(X)$ be the Albanese map. This map is equivariant with respect to the Jacobi homomorphism $$\rho \,: 
\, \text{Aut}_0(X)\,\longrightarrow\, T(X)$$ with $T(X)$ being the compact complex torus
given by the connected component of the group of holomorphic automorphisms of the Albanese manifold $A(X)$.
The differential of $\rho$ is a homomorphism of Lie algebras $$d \rho \,:\, H^0 (X, \,T_X)\, \longrightarrow\, H^0 (X, \,\Omega_X)^*,$$
The kernel of $d \rho$ is the Lie subalgebra ${\mathfrak l}\, \subset\, H^0 (X, \,T_X)$ consisting of holomorphic vector
fields on $X$ that are tangent to the fibers of the map $\rm{alb}$; they are characterized as follows (see \cite[Proposition 6.7]{Fu}):
$${\mathfrak l}\,= \,\{ V \,\in\, H^0 (X,\,T_X)\,\,\mid \,\, \theta (V)\,=\,0\ \, \forall\ \, \theta \,\in\, H^0 (X, \,\Omega_X)\}.$$
If $\mathfrak{l} \,\neq\, \{0 \}$, then $X$ is bimeromorphic to a unirational manifold
\cite[Proposition 5.10]{Fu}. In this case $K_X$ does not admit nontrivial holomorphic sections
(also \cite[Corollary 5.11]{Fu}). Since $K_X$ is holomorphically trivial, we conclude that $\mathfrak{l}\, =\, \{0 \}$.
Consequently $d \rho$ is injective.

Since $\dim H^0 (X, \,T_X)\,=\, \dim H^0 (X, \,\Omega_X)$, we conclude that the injective homomorphism $d \rho$ is an isomorphism.
As $T(X)$ is connected, and $d\rho$ is surjective, it follows that $\rho$ is surjective as well.

Moreover, the kernel of the Jacobi homomorphism $\rho$ is known to be a linear algebraic group \cite[Corollary 5.8]{Fu}. In particular, 
$\mathrm{ker}(\rho)$ has only finitely many connected components. Since the Lie algebra of $\mathrm{ker}(\rho)$
is trivial, it is a finite group. Consequently, 
$\text{Aut}_0(X)$ is a compact complex Lie group, and $\rho \,:\, \text{Aut}_0(X)\,\longrightarrow\, T(X)$ is a finite
covering map. This map factors though a complex Lie group homomorphism $p:\text{Aut}_0(X)\,\longrightarrow\, A(X)$ satisfying 
\begin{equation}
\label{prop p}
\mathrm{alb}(\phi(x))\,=\,\mathrm{alb}(x)+p(\phi), \quad \forall\ \, x\,\in \,X.
\end{equation} 

It is now easy to conclude that $\mathrm{alb}$ is an \'etale trivial fiber bundle onto $A(X)$ (see for example \cite[Lemma 6.1]{Fu} or 
\cite[Theorem~4.1]{CGGN}). We recall the argument here for the reader's convenience. Consider the finite \'etale base change 
$X\times_{A(X)}\text{Aut}_0(X)\,\longrightarrow\,\text{Aut}_0(X)$ by $p$ and let $Z\,=\,\mathrm{alb}^{-1}(0)$. By \eqref{prop p}, the map
\begin{align*}
Z\times \text{Aut}_0(X) &\,\longrightarrow \,X\times_{A(X)}\text{Aut}_0(X)\\
(z,\phi) &\,\longmapsto\, (\phi(z),\,\phi)
\end{align*}
is well-defined, isomorphic and commutes with the projections to the factor $\text{Aut}_0(X)$. This implies our claim on 
$\mathrm{alb}$, and one can check the connectedness of $F$ by observing that the finite factor in the Stein factorization of 
$\mathrm{alb}$ is \'etale and has a section by the universal property of the Albanese map.

Finally, the Albanese map of $X\times_{A(X)}\text{Aut}_0(X)\,\simeq\, Z\times \text{Aut}_0(X)$ coincides with the projection to the 
second factor. In particular, we have $h^{1,0 }( Z)\,=\,0$. Now, if $Z'\,\longrightarrow\, Z$ is an \'etale cover such that 
$h^{1,0}(Z')\,\neq\, 0$, we repeat the construction on $Z'$ to split off another torus factor after a further \'etale cover of $Z'$, and 
the process stops after finitely many steps.
\end{proof}

Let $X$ be a compact manifold in Fujiki class $\mathcal C$ with trivial canonical bundle. It is said to admit an {\it algebraic 
approximation} if there is a small deformation $\mathcal{X} \,\longrightarrow\, \Delta$ of $X$, with $X\,=\,X_0$ being the central fiber,
and a sequence of points $t_i \,\in\,\Delta$, $i\, \in\, \mathbb N$, converging to $0$, such that all the fibers
$\mathcal{X}_{t_i}$ are Moishezon.

\begin{lemma}\label{density}
Let $X$ be a compact complex manifold in Fujiki class $\mathcal C$ such that $c_1(X)\,=\,0\,\in\, {\rm H}^2(X,\, \R)$ and 
$c_2(X)\,=\,0\,\in\, {\rm H}^4(X,\,\R)$. Assume that $X$ admits an algebraic approximation. Then $X$ admits a finite unramified
covering by a compact complex torus.
\end{lemma}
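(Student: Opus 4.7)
The plan is to reduce to the Moishezon case (Corollary~\ref{c2 zero}) via the algebraic approximation, then transfer the torus structure back to $X$ through topologically invariant data and Theorem~\ref{splittorus1}. Let $f\,:\,\mathcal X\,\longrightarrow\, \Delta$ denote the algebraic approximation with $f^{-1}(0)\,=\,X$ and $\mathcal X_{t_i}$ Moishezon for some sequence $t_i\,\to\, 0$. Since $f$ is a proper holomorphic submersion over a disc, Ehresmann's theorem makes it $C^\infty$-locally trivial, so the Chern classes of the fibers are constant in $H^*(X,\, \mathbb R)$ via the resulting topological identification. In particular $c_1(\mathcal X_{t_i})\,=\,0$ and $c_2(\mathcal X_{t_i})\,=\,0$ for every $i$, and Corollary~\ref{c2 zero} (applied in the Moishezon case) provides a finite étale cover $T_i\,\longrightarrow\, \mathcal X_{t_i}$ by a complex torus of dimension $n\,=\,\dim X$.

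Next I would exploit the topological invariance of the fundamental group: by Ehresmann again, $\pi_1(X)\,\simeq\, \pi_1(\mathcal X_{t_i})$ as abstract groups, and this group contains a finite-index subgroup isomorphic to $\mathbb Z^{2n}$, coming from $\pi_1(T_i)$. Passing to the corresponding finite étale cover $X'\,\longrightarrow\, X$ yields $\pi_1(X')\,\simeq\, \mathbb Z^{2n}$, hence $b_1(X')\,=\,2n$. Up to replacing $X'$ by a further finite étale cover (using \cite{To} to trivialize the torsion canonical bundle), one may assume $K_{X'}$ is holomorphically trivial; $X'$ remains Fujiki with $b_1(X')\,=\,2n$.

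To conclude, I would invoke that Fujiki class $\mathcal C$ manifolds satisfy the $\ddc$-lemma, hence the Hodge decomposition $b_1\,=\,2h^{1,0}$, which gives $q(X')\,=\,n\,=\,\dim X'$. Applying Theorem~\ref{splittorus1} to $X'$, one obtains a further finite étale cover $X''\,\longrightarrow\, X'$ with a splitting $X''\,\simeq\, T\times Z$, where $T$ is a complex torus and $h^{1,0}(Z'')\,=\,0$ for every étale cover $Z''\,\longrightarrow\, Z$. The inequalities
\[q(X')\,\le\, q(X'')\,=\,\dim T + q(Z)\,\le\, \dim X''\,=\,n\]
together with $q(X')\,=\,n$ force $q(Z)\,=\,0$ and $\dim T\,=\,n$, so $Z$ is a point and $X''\,\simeq\, T$ is a complex torus. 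Composing with the previous covers gives the desired finite unramified cover of $X$ by a torus.

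There is no serious obstacle to this plan; the two points requiring care are the topological invariance of Chern classes and of $\pi_1$ across the family (both consequences of Ehresmann applied to the smooth proper family $f$), and the use of the $\ddc$-lemma to upgrade the topological invariant $b_1$ to the holomorphic invariant $h^{1,0}$ for Fujiki manifolds.
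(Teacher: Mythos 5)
Your proof is correct and follows essentially the same route as the paper's: reduce to the Moishezon fiber via Corollary~\ref{c2 zero}, transfer the torus cover across the $C^\infty$-trivial family, use the $\ddc$-lemma for Fujiki manifolds to get $h^{1,0}=\dim X$ on a suitable \'etale cover, and conclude with Theorem~\ref{splittorus1}. One cosmetic remark: your final inequality $\dim T + q(Z)\le \dim X''$ implicitly uses $q(Z)\le \dim Z$, which is not automatic for arbitrary compact complex manifolds, but this is harmless here since $q(Z)=0$ is already part of the conclusion of Theorem~\ref{splittorus1} (take $Z''=Z$).
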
	 
 
\begin{proof} 
By Theorem \ref{splittorus1}, up to a finite \'etale cover, we have a decomposition $X \,=\,T \times Z$ where $h^{1,0 }( Z)\,=\,0$ and $T$ is a
compact complex torus. The given condition that $c_2(X)\,=\,0$ implies that $c_2(Z)\,=\,0$.
Let $\mathcal{X}\,\longrightarrow\, \Delta$ be an algebraic approximation of $X$ and let $t_0\in \Delta$ such that $X_{t_0}$ is Moishezon. Since $c_2(X)\, =\,0$, it follows that $c_2 (X_{t_0})\,=\,0$.

Corollary \ref{c2 zero} applies to the Moishezon manifolds $X_{t_0}$ to show that a finite \'etale cover of
$X_{t_0}$ is an abelian variety. Since $\mathcal X\,\longrightarrow\, \Delta$ is $C^{\infty}$-trivial, one can extend the latter cover to 
a finite \'etale cover of the family $\mathcal{X}\,\longrightarrow\, \Delta$. This enables us to assume
that $X_{t_0}$ is torus. Since $t\,\longmapsto\, b_1(X_t)$ is constant and both $X$ and $X_{t_0}$ are Fujiki, it follows that
$h^{1,0}(X)\,=\, h^{1,0} (X_{t_0})\,=\,\dim X$. Since $X\,=\,T\times Z$ and $h^{1,0}(Z)\,=\,0$, we get 
$\dim T\,=\,\dim X$ so that $X\,=\,T$ is a compact complex torus.
\end{proof}

\begin{corollary}
\label{c_2=0, Fujiki}
Let $X$ be a compact complex manifold in Fujiki class $\mathcal C$ such that $c_1(X)\,=\,0\,\in\, {\rm H}^2(X,\, \R)$ and 
$c_2(X)\,=\,0\,\in\, {\rm H}^4(X,\,\R)$. Assume that $X$ has algebraic dimension 
$a(X)\,=\, \dim X -1$. Then $X$ admits a finite \'etale cover $T\,\longrightarrow\, X$ where $T$ is a compact complex torus.
\end{corollary}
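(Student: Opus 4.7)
The plan is to reduce this corollary directly to Lemma~\ref{density}, whose conclusion already matches the one we seek. To apply that lemma we must produce an algebraic approximation of $X$, and this is precisely where the hypothesis $a(X)=\dim X-1$ enters.

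First, I would invoke the deformation result of Lin \cite{Lin}, cited in the discussion preceding Theorem~\ref{thmb}: a compact complex manifold in Fujiki class $\mathcal C$ of algebraic dimension $n-1$ admits a small deformation $\mathcal X \longrightarrow \Delta$ in which a sequence of fibers $\mathcal X_{t_i}$ with $t_i \to 0$ are Moishezon. The hypotheses $X \in \mathcal C$ and $a(X)=n-1$ match those of Lin's theorem, so the deformation is produced at no additional cost.

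Second, I would apply Lemma~\ref{density} verbatim. All of its hypotheses are now met: $X$ lies in Fujiki class $\mathcal C$, both $c_1(X)=0 \in \mathrm{H}^2(X,\R)$ and $c_2(X)=0 \in \mathrm{H}^4(X,\R)$ are assumed in the corollary, and the algebraic approximation was produced in the previous step. The conclusion of the lemma is exactly the existence of a finite \'etale cover $T \longrightarrow X$ by a compact complex torus, which is what we wanted.

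The only potential subtlety is to confirm that Lin's result genuinely applies in the Fujiki setting rather than just the K\"ahler one, but the discussion after Theorem~\ref{thmb} explicitly credits \cite{Lin} with providing such a deformation to Moishezon manifolds for Fujiki manifolds with $a(X)=n-1$. All of the substantive work is already packaged inside Lemma~\ref{density}: the \'etale splitting $X \simeq T \times Z$ coming from Theorem~\ref{splittorus1}, the application of Corollary~\ref{c2 zero} to a Moishezon member of the approximation (which inherits $c_1=0$ and $c_2=0$ by topological invariance under deformation), and the deformation invariance of $h^{1,0}$ within Fujiki class $\mathcal C$ forcing the factor $Z$ to be a point. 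Thus the corollary is essentially a one-line invocation once Lin's deformation theorem is cited.
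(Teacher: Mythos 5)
Your proposal is correct and coincides with the paper's own proof: the authors likewise obtain the algebraic approximation from Lin's result (\cite[Corollary 1.4]{Lin}) for Fujiki manifolds with $a(X)=\dim X-1$ and then apply Lemma~\ref{density} verbatim. No gaps.
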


\begin{proof}
This is a direct consequence of Lemma~\ref{density} and of the approximation result \cite[Corollary 1.4]{Lin} stating that
a Fujiki class $\mathcal C$ manifold $X$ of algebraic dimension $a(X)\,=\,\dim X-1$ admits an algebraic approximation.
\end{proof}

\begin{remark}\label{unobstructed}
While compact K\"ahler manifolds with numerically trivial canonical bundle are known to admit algebraic approximation \cite{Ca}, it is 
not known whether all Fujiki class $\mathcal C$ manifolds with numerically trivial canonical bundle admit algebraic approximations. 
Despite the unobstructedness of the Kuranishi space (\cite[Theorem 1.2]{Po} and \cite[Theorem 3.3]{ACRT}), a major stumbling block
in adapting the arguments in \cite{Ca} is the fact that Fujiki class $\mathcal C$ manifolds are 
not stable under deformations, already in dimension three \cite{Cam}.
\end{remark}

\section{Geometric structures on Moishezon manifolds}\label{volse}

The aim of this section is to prove the following theorem on holomorphic geometric structures on Fujiki manifolds with numerically 
trivial canonical bundle that are either Moishezon or of dimension no greater than four.

\begin{theorem}
\label{moishezon}
Let $X$ be a compact Fujiki manifold such that $c_1(X)\, \in\, {\rm H}^2
(X,\, \mathbb R)$ vanishes. Assume either that $X$ is Moishezon or that $\dim X\le 4$. 

\begin{enumerate}[label=(\roman*)]
\item There exists a Zariski open set $U \,\subset\, X$, whose complement has complex codimension
at least two, such that any holomorphic geometric structure $\phi$ of affine type
on $X$ is locally homogeneous on $U$. 
\item If there exists a rigid holomorphic geometric structure $\phi$ of affine type on $X$, then there exists a finite \'etale cover $T
\,\longrightarrow\, X$, where $T$ is a complex torus. The pull-back of $\phi$ on $T$ is translation invariant.
\end{enumerate}
\end{theorem}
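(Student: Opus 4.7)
The argument adapts that of Lemma~\ref{loc hom} to the Zariski open subset $U\subset X$ produced by Theorem~\ref{polystability}. Recall that that theorem supplies an incomplete Ricci-flat K\"ahler metric $\omega$ on $U$ (with $X\setminus U$ of codimension at least two) with respect to which every holomorphic tensor on $X$ is $\omega$-parallel on $U$. In particular, a parallel tensor vanishing at one point of $U$ must vanish identically on $U$, and then on $X$ by Hartogs. The proof of Lemma~\ref{loc hom} encodes the failure of local homogeneity as the non-vanishing of a holomorphic tensor on $X$ (built from $\phi$ and its jets) which automatically vanishes wherever local Killing fields of $\phi$ act transitively; the usual Bochner principle then forces it to vanish everywhere. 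Running the same argument with our local Bochner principle on $U$ in place of the global one yields the local homogeneity of $\phi$ on $U$.

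\textbf{Plan for (ii), reduction to $Y=\mathrm{pt}$.} By Theorem~\ref{polystability}, after a finite \'etale cover, $X'\simeq T\times Y$ with $T$ a compact complex torus and $Y=\prod_{i}Y_i\times\prod_{j}Z_j$ a product of ICY and IHS manifolds of vanishing augmented irregularity $\widetilde q(Y)=0$. The conclusion reduces to showing that $Y$ is a point. Granting this, $X'=T$, and the translation invariance of the pullback of $\phi$ to $T$ follows from local homogeneity (part (i)) together with the fact that every global holomorphic vector field on a compact complex torus is translation invariant and the fact (Lemma~\ref{vol}) that $\mathrm{Aut}(T,\phi)$ preserves the translation-invariant volume measure attached to a trivialization of $K_T$.

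\textbf{Main obstacle.} Assume for contradiction that $\dim Y\ge 1$. Stability of each $TY_i$ with $c_1(TY_i)=0$ forces $h^0(Y_i,TY_i)=0$, while the identification $TZ_j\simeq \Omega^1_{Z_j}$ combined with the IHS condition $h^0(Z_j,\Omega^1_{Z_j})=0$ gives $h^0(Z_j,TZ_j)=0$. Hence, for every finite \'etale cover $X''=T\times Y''$ of $X'$, we have
\[
h^0(X'',TX'')=\dim T''<\dim X''.
\]
The strategy is to produce such a cover on which every local Killing field of $\phi$ extends to a global holomorphic vector field; combined with local homogeneity this yields $h^0(X'',TX'')\ge \dim X''$, contradicting the above. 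This is the core technical point. For the extension, Corollary~\ref{fund group} applied to $Y$ provides finitely many $n$-dimensional complex representations of $\pi_1(Y)$ up to conjugation, each of finite image; intersecting their kernels defines a finite \'etale cover $Y''\to Y$ trivializing these monodromies. The residual monodromy of the Killing sheaf along $\pi_1(T)$-loops is trivial because every holomorphic vector field on a compact complex torus is translation invariant, so parallel transport of a local Killing field along a loop in $T$ must act as the identity on it. Invoking Remark~\ref{rep pi1} on $X''=T\times Y''$ then produces the desired global extension, and hence the contradiction, completing the proof.
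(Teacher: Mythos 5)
Your part (i) and the overall skeleton of part (ii) -- decomposition via Theorem~\ref{polystability}, the vanishing $h^0(Y_i,TY_i)=h^0(Z_j,TZ_j)=0$ for the ICY/IHS factors, and the use of Corollary~\ref{fund group} together with Remark~\ref{rep pi1} to extend local Killing fields -- are exactly the paper's strategy. The problem is where you run the extension argument. You apply Remark~\ref{rep pi1} to $X''=T\times Y''$, but $\pi_1(X'')$ contains $\pi_1(T)\simeq\mathbb Z^{2\dim T}$, which has an abundance of non-trivial finite-dimensional complex representations; so the hypothesis of Remark~\ref{rep pi1} simply fails on $X''$ whenever $\dim T>0$, and Corollary~\ref{fund group} (which you invoke for $Y$, where $\widetilde q=0$) says nothing about $\pi_1(T)$. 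Your patch -- that ``the residual monodromy of the Killing sheaf along $\pi_1(T)$-loops is trivial because every holomorphic vector field on a compact complex torus is translation invariant'' -- does not close this gap: the local Killing fields of $\phi$ on $X''$ are local sections of $TX''$, not global vector fields on $T$, and the monodromy in question is the action of $\pi_1(X'')$ on the stalk of the locally constant sheaf of local Killing algebras, a finite-dimensional representation that has no a priori reason to restrict trivially to the $\mathbb Z^{2\dim T}$ factor. As stated, the extension step, and hence the contradiction $h^0(X'',TX'')\ge\dim X''$, is unjustified.

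The paper sidesteps this entirely: since $TT$ is holomorphically trivial, $\phi$ induces a rigid holomorphic geometric structure $\phi_Y$ of affine type on the factor $Y$ itself (following the proof of Theorem~2 in \cite{D2}), which by part (i) is locally homogeneous on a non-empty Zariski open subset of $Y$. One then passes to the finite \'etale cover $Y'\to Y$ killing the finitely many representation classes of $\pi_1(Y)$ furnished by Corollary~\ref{fund group}, applies Remark~\ref{rep pi1} \emph{on $Y'$} (whose fundamental group genuinely has no non-trivial linear representations) to extend a non-zero local Killing field of $\phi_{Y'}$, and contradicts $H^0(Y',TY')=0$. To repair your argument you should insert this reduction to the factor $Y$ before extending Killing fields; without it, the core step of part (ii) does not go through.
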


\begin{remark}
\label{rem fund group}
If Conjecture~\ref{conj structure} were to hold, then the proof of Theorem~\ref{moishezon} below would apply to show that its conclusions are valid for any compact Fujiki manifold with trivial first Chern class. A weaker but unconditional result will be proven in that direction (see Corollary~\ref{thmFujiki}). 
\end{remark}

\begin{proof}
Let $U$ be the Zariski open set defined in the statement of Theorem \ref{polystability}, then the first statement follows from 
Lemma~\ref{loc hom} and the Bochner principle proved in Theorem~\ref{polystability}.

Next assume that $\phi$ is rigid. From Theorem~\ref{polystability} we know that $X$ satisfy the Decomposition 
conjecture. Hence one can find a finite \'etale cover $X'\,=\,T\times Y\,\longrightarrow\, X$ where $Y$ 
has zero augmented irregularity and is either Moishezon or has dimension at most four. Since the tangent 
bundle of $T$ is trivial, $\phi$ induces a rigid holomorphic geometric structure $\phi_Y$ of affine type on 
$Y$ as explained in the proof of \cite[Theorem~2]{D2}. By the first part, $\phi_Y$ is locally homogeneous on 
a Zariski open set of $Y$.

We claim that there exists $g\,:\,Y'\,\longrightarrow\, Y$ a finite \'etale cover such that any complex linear 
representation of $\rho\,:\,\pi_1(Y')\,\longrightarrow\, \mathrm{GL}(n,\mathbb C)$ is trivial, where $n\,=\,\dim Y$. 
Indeed, by Corollary~\ref{fund group} there are only finitely many classes $\{[\rho_i]\,\big\vert\, i\,\in \,I\}$ of such 
representations up to conjugation and each $\rho_i$ has finite image. The sought cover $g$ then corresponds to the 
subgroup of finite index of $\pi_1(Y)$ obtained as the intersection $\cap_I \mathrm{ker}(\rho_i)$. Up to replacing 
$Y$ with $Y'$, we can therefore assume that $\pi_1(Y)$ has no non-trivial complex representation of dimension $n$. Moreover, up to applying Theorem~\ref{polystability} again, one can assume that $Y=\prod_{i\in I} Y_i$ is a product of ICY and IHS manifolds. 

In view of Remark~\ref{rep pi1}, non-trivial local Killing fields for the rigid structure $\phi_Y$ (which 
exist since $\phi_Y$ is locally homogeneous on a non-empty Zariski open set of $Y$) can be extended globally 
to $Y$. In particular, we find that $H^0(Y, \,T_Y)\,\neq\, 0$. Now, if we set $n_i
\,=\,\dim Y_i$, then the 
identity $K_{Y_i}\,\simeq\, \mathcal O_{Y_i}$ implies that $H^0(Y_i,\,T_{Y_i})\,\simeq\, H^0(Y_i,\, 
\Omega^{n_i-1}_{Y_i})$, and the latter space is zero by the very definition of ICY and IHS
manifolds. In particular, we must have $H^0(Y,\, T_Y)\,=\, 0$, which provides the expected contradiction.
\end{proof}

\begin{remark} \label{rational}
In Theorem \ref{moishezon} the condition on triviality of $c_1(X)$ is essential. To illustrate this, consider 
the complex projective line equipped with two holomorphic nonzero global vector fields $v_1$ and $v_2$ such that 
the two divisors ${\rm div}(v_1)$ and ${\rm div}(v_2)$ are actually disjoint. The geometric structure $\phi$ on 
the complex projective line given by $v_1,\, v_2$ is a holomorphic rigid geometric structure of affine type. Note 
that $\phi$ is not locally homogeneous on any nonempty open subset of the projective line. Indeed, the quotient 
$v_1/v_2$, which is a non-constant meromorphic function, is actually a scalar invariant of $\phi$; it cannot be 
constant on any nonempty open subset of the projective line.
\end{remark}

The following is derived using Theorem \ref{moishezon}.

\begin{corollary}\label{torus bundle}
Let $\mathcal T$ be a compact complex torus.
Let $\pi \,:\, X \,\longrightarrow\, B$ be a
holomorphic principal $\mathcal T$--bundle over a compact Moishezon manifold $B$
with numerically trivial canonical bundle $K_B$. Then there exists a $\mathcal T$-invariant Zariski open set
$U \,\subset\, X$, whose complement has complex codimension at least two, such
that any holomorphic geometric structure $\phi$ of affine type
on $X$ is locally homogeneous on $U$. If such a $\phi$ is moreover rigid, then
the fundamental group of $X$ must be infinite.
\end{corollary}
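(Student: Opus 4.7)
The plan has two parts: produce a $\mathcal T$-invariant Zariski open set $U \subset X$ of codimension at least two on which every holomorphic geometric structure of affine type on $X$ is locally homogeneous, then derive the infinitude of $\pi_1(X)$ from Proposition~\ref{loc hom2} in the rigid case.

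The natural candidate is $U := \pi^{-1}(U_B)$, where $U_B \subset B$ is the Zariski open set provided by Theorem~\ref{polystability} applied to the Moishezon manifold $B$ with $c_1(B)=0$: $U_B$ has complement of codimension at least two and carries an incomplete Ricci-flat K\"ahler metric $\omega_B$ parallelizing every holomorphic tensor on $B$. Since $\pi$ is a submersion, $U$ is $\mathcal T$-invariant with complement of codimension at least two. The key mechanism for transferring the Bochner principle from $B$ to $U$ is that the $\mathcal T$-action forces holomorphic tensor fields on $X$ to be $\mathcal T$-invariant. Indeed, $TX$ restricted to any fiber $\pi^{-1}(b) \simeq \mathcal T$ is holomorphically trivial (since $T_{X/B}$ is trivialized globally by the fundamental vector fields of the $\mathcal T$-action, and $\pi^*TB$ is trivial along a fiber), so any holomorphic tensor on $X$ restricts on each fiber to a section with constant coefficients in this trivialization, hence a translation-invariant tensor on $\mathcal T$; consequently the tensor is $\mathcal T$-invariant on all of $X$. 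Descending through the principal bundle $\pi$, it corresponds to a holomorphic section on $B$ of a bundle built from the Atiyah extension
\[
0 \,\longrightarrow\, \mathrm{Lie}(\mathcal T)\otimes\mathcal O_B \,\longrightarrow\, \mathrm{At}(\pi) \,\longrightarrow\, TB \,\longrightarrow\, 0,
\]
which the Bochner principle on $B$, combined with the flat translation-invariant vertical structure, parallelize on $U_B$. Assembling these yields a Hermitian metric on $U$ that parallelizes every holomorphic tensor of $X$ on $U$, and one thus obtains the Bochner principle on $U$. Lemma~\ref{loc hom} then implies that any holomorphic geometric structure $\phi$ of affine type on $X$ is locally homogeneous on $U$.

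For the rigid case, observe that $K_X = \pi^*K_B$ is numerically trivial, since the principal $\mathcal T$-bundle $\pi$ has trivial relative canonical bundle ($\mathcal T$ being a torus) and $K_B$ is numerically trivial; hence $K_X$ is holomorphically torsion by \cite[Theorem~1.5]{To}. After a finite \'etale cover, which preserves rigidity of the pulled-back $\phi$ and does not affect the cardinality of $\pi_1$, one may assume $K_X$ is trivial. Proposition~\ref{loc hom2} applied to the rigid $\phi$, locally homogeneous on the dense open subset $U$, then forces $\pi_1(X)$ to be infinite.

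The main technical obstacle is the descent/lift of the Bochner parallelism across the Atiyah sequence of $\pi$: the $\mathcal T$-invariant tensors on $X$ descend to sections of tensor powers of $\mathrm{At}(\pi)$ and its dual rather than to tensors of $TB$, so one must either broaden the statement of Theorem~\ref{polystability} to cover such extensions of tensor bundles of $TB$ by trivial ones, or directly construct a Hermitian metric on $U$, assembled from the pullback of $\omega_B$ and the flat translation-invariant vertical fiber metric (using a choice of hermitian connection on $\pi$), with respect to which all holomorphic tensors of $X$ restrict to parallel sections on $U$.
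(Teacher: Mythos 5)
Your overall architecture matches the paper's: you take $U=\pi^{-1}(U_B)$ with $U_B$ the open set of Theorem~\ref{polystability} applied to $B$, you aim to verify the Bochner principle on $U$ so that Lemma~\ref{loc hom} applies, and in the rigid case you conclude via Proposition~\ref{loc hom2} exactly as the paper does. But the central step --- getting from a holomorphic tensor on $X$ vanishing at a point of $U$ to a contradiction with the Bochner principle on $B$ --- is left open. You correctly observe that a holomorphic tensor on $X$ is $\mathcal T$-invariant (though your justification that $TX|_{\pi^{-1}(b)}$ is trivial is incomplete: an extension of trivial bundles on a torus need not split, and one must argue that the extension class of $0\to\mathcal O_X\otimes\mathrm{Lie}(\mathcal T)\to TX\to\pi^*TB\to 0$, being pulled back from $B$, restricts to zero on each fiber). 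The real problem is what you yourself flag as the ``main technical obstacle'': the invariant tensor descends to a section of tensor powers of $\mathrm{At}(\pi)$ and its dual, and Theorem~\ref{polystability} gives no Bochner principle for such sections. Neither of your proposed fixes is carried out, and the second one (a Hermitian metric on $U$ assembled from $\pi^*\omega_B$, a flat vertical metric and a choice of connection) would require the connection to be parallel/flat to make the descended tensors parallel, which is not available when $\mathrm{At}(\pi)$ is a non-split extension. So as written the first assertion of the corollary is not proved.

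The paper closes this gap by a different reduction. It first passes to the index-one cover $B'\to B$ (using \cite[Theorem 1.5]{To}) and replaces $X$ by $X\times_B B'$ so that $K_B$, and hence $K_X=\pi^*K_B\otimes K_{X/B}$ with $K_{X/B}$ trivialized by the torus action, is holomorphically trivial. The isomorphism $TX\simeq\Lambda^{n-1}T^*X$ given by a trivializing section of $K_X$ then converts any mixed tensor $\eta\in H^0(X,(TX)^{\otimes a}\otimes(T^*X)^{\otimes b})$ vanishing at $x$ into a purely covariant tensor $s\in H^0(X,(T^*X)^{\otimes r})$ vanishing at $x$; for \emph{covariant} tensors on a principal torus bundle, the proof of \cite[Theorem 1.2]{BD2} shows that $s=\pi^*t$ for some $t\in H^0(B,(T^*B)^{\otimes r})$, which lands precisely in the range of the Bochner principle of Theorem~\ref{polystability} on $B$ and yields the contradiction. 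In short: where you try to descend arbitrary (Atiyah-valued) tensors, the paper first uses triviality of $K_X$ to reduce to covariant tensors, for which descent to honest tensors on $B$ is already known. To repair your argument you would need either to carry out this reduction or to prove a Bochner-type statement for sections of $\mathrm{At}(\pi)^{\otimes\bullet}$, which is genuinely more than Theorem~\ref{polystability} provides.
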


\begin{proof} Theorem 1.5 of \cite{To} shows that there exists a positive integer $\ell$ such
that $K_B^{\otimes \ell}$ is holomorphically trivial. We consider the index one cover $B'\,\longrightarrow\, B$; this is a finite unramified cover of $B$ such that $K_{B'}$ is trivial. 
Up to replacing $X$ with $X\times_B B'$, we may assume that $K_B$ is trivial. 

We claim that $K_X$ is trivial. Indeed, since $X$ is a principal $\mathcal T$-bundle, any trivialization $\Omega_T$ of $K_T$ glues to a 
trivialization $\Omega_{X/B}$ of $K_{X/B}$. If $\Omega_B$ is a trivialization of $K_B$, then $\Omega_{X/B}\wedge \Omega_B$ yields a 
trivialization of $K_X$.

Consider the Zariski open dense subset $U \,\subset\, B$ defined in the statement of Theorem~\ref{polystability}. Also
consider the Zariski open set $\pi^{-1} (U) \,\subset\, X$.
We will prove that any holomorphic geometric structure of affine type on $X$ is locally homogeneous on $\pi^{-1} (U)$.

To prove this by contradiction, assume that there exists a holomorphic geometric structure of 
affine type on $X$ which is not locally homogeneous on $\pi^{-1} (U)$. By Lemma~\ref{loc hom}, there exist a point 
 $x \,\in\, \pi^{-1}(U)$, integers $a,\,b \,\geq\, 0$ and a nontrivial holomorphic section
\begin{equation}\label{f4}
\eta\, \in\, {\rm H}^0(X,\, (TX)^{\otimes a} \otimes (T^*X)^{\otimes b})\setminus\{0\}
\end{equation}
such that $\eta(x)\,=\, 0$; note that this condition implies that $a+b\, >\, 0$.

Since $K_X$ is trivial, contraction by $TX$ of a fixed nonzero
holomorphic section of $K_X$ produces a holomorphic isomorphism
$ TX \simeq \Lambda ^{n-1} (T^*X)$ where $n\,=\, \dim_{\mathbb C} X$. Therefore, the section $\eta$ in \eqref{f4}
produces
$$s\, \in\, {\rm H}^0(X,\, (T^*X)^{\otimes r}) \setminus\{0\},
$$
where $r\,=\,(n-1)a+b$, such that $s(x)\,=\, 0$.

Then the proof of \cite[Theorem 1.2]{BD2} shows that $s$ is the pull-back of a 
holomorphic section $t \, \in\, {\rm H}^0(B,\, (T^*B)^{\otimes r})$; the
condition that $s(x)\,=\, 0$ implies that 
$t(\pi(x))\,=\, 0$. Since $\pi(x) \in U$, this is in contradiction with the Bochner principle proved in Theorem \ref{polystability}. 

Finally, assume that $\phi$ is rigid. As in the proof of Theorem 
\ref{moishezon}, since $\phi$ is locally homogeneous on an open dense 
subset of $X$, Proposition \ref{loc hom2} 
shows that the fundamental group of $X$ is infinite. 
\end{proof} 

As a consequence of Theorem \ref{moishezon} we have the following:

\begin{corollary}\label{thmFujiki}
Let $X$ be a compact complex manifold in Fujiki class $\mathcal C$ such that $c_1(X)\, \in\, {\rm H}^2
(X,\, \mathbb R)$ vanishes. If $X$ bears a holomorphic rigid geometric structure of affine type, then
the fundamental group of $X$ must be infinite.
\end{corollary}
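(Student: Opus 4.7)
The plan is to dichotomize on the algebraic dimension $a(X)$ and reduce each branch to a result already in the paper (or its predecessor).

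Case 1: $a(X) = \dim X$, so $X$ is Moishezon. Then $X$ is a compact Fujiki Moishezon manifold with $c_1(X) = 0$ carrying a rigid holomorphic geometric structure of affine type, and so Theorem~\ref{moishezon}(ii) applies directly and produces a finite \'etale cover $T \to X$ with $T$ a compact complex torus of dimension $\dim X$. Since $\pi_1(T) \cong \mathbb Z^{2\dim X}$ is infinite and embeds as a finite-index subgroup of $\pi_1(X)$, the group $\pi_1(X)$ is infinite in this case.

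Case 2: $a(X) < \dim X$, so $X$ is Fujiki but not Moishezon. Here I would first reduce to the case where $K_X$ is holomorphically trivial: by Tosatti's theorem \cite[Theorem~1.5]{To}, $K_X$ is torsion, so the associated index-one cyclic \'etale cover $\tilde X \to X$ (constructed as in the proof of Corollary~\ref{torus bundle}) satisfies $K_{\tilde X} \simeq \mathcal O_{\tilde X}$. Finite \'etale covers preserve the Fujiki property as well as the algebraic dimension (since $\mathcal M(\tilde X)/\mathcal M(X)$ is a finite, hence algebraic, field extension), so $\tilde X$ is still Fujiki with $a(\tilde X) < \dim \tilde X$, and the pull-back of $\phi$ is a rigid holomorphic geometric structure of affine type on $\tilde X$. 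At this point I would invoke the earlier result of Biswas--Dumitrescu \cite{BD2}, which addresses precisely the non-maximal algebraic dimension case under the assumption of trivial canonical bundle, to conclude that $\pi_1(\tilde X)$ is infinite; since $\tilde X \to X$ is finite, this forces $\pi_1(X)$ to be infinite as well.

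The main obstacle that this dichotomy sidesteps is the absence of a general Beauville--Bogomolov type decomposition for Fujiki manifolds with vanishing first Chern class (Conjecture~\ref{conj structure}). What makes the strategy work is exactly that the Moishezon-specific decomposition obtained in Theorem~\ref{polystability} is strong enough to yield the torus cover in Theorem~\ref{moishezon}(ii) when $a(X)$ is maximal, while the algebraic reduction-based arguments of \cite{BD2} handle the complementary non-Moishezon range; together they exhaust all Fujiki manifolds with $c_1(X)=0$.
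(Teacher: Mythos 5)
Your proposal is correct and follows essentially the same route as the paper: the paper's proof is exactly the dichotomy on algebraic dimension, applying Theorem~\ref{moishezon} in the Moishezon case and Theorem~4.2 of \cite{BD2} in the non-maximal case. Your additional reduction to holomorphically trivial $K_X$ via the index-one cover before invoking \cite{BD2} is a harmless (and prudent) refinement, but the argument is the same.
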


Note that the conclusion of Corollary~\ref{thmFujiki} is weaker than that of Theorem~\ref{moishezon} (ii), but it applies to any Fujiki manifold (see Remark~\ref{rem fund group}).
 
\begin{proof}
This follows from Theorem \ref{moishezon} if $X$ is a Moishezon manifold. Assume that $X$ is not a Moishezon manifold.
Since the algebraic dimension of $X$ is not maximal,
Theorem 4.2 of \cite{BD2} implies that the fundamental group of $X$ is infinite.
\end{proof}

Another consequence of Theorem \ref{moishezon} is the following:

\begin{corollary}\label{Kill rigid}
Let $X$ be a compact complex manifold with $c_1(X)\,=\,0 \,\in\, H^2(X,\,\R)$. Then any holomorphic
rigid geometric structure of affine type on $X$ admits a non-trivial Lie algebra of (local) Killing vector fields.
\end{corollary}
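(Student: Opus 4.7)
The plan is to split the argument into two cases, depending on whether or not $X$ is Moishezon. Both cases will produce at least one non-zero local holomorphic Killing vector field, which is enough since the sheaf of local Killing fields of a rigid geometric structure is locally constant, so non-triviality at one point propagates.

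Suppose first that $X$ is Moishezon. Then $X$ is in particular a Fujiki manifold with $c_1(X)=0$, so Theorem~\ref{moishezon}\,(i) applies directly: it yields a Zariski open set $U\subset X$ with complement of complex codimension at least two, on which $\phi$ is locally homogeneous. By definition of local homogeneity (see the discussion in Section~\ref{section: geometric structures}), the tangent space $T_xX$ at every $x\in U$ is spanned by the values of local holomorphic Killing vector fields for $\phi$. In particular, the Killing algebra has dimension at least $\dim X\geq 1$, which is certainly non-trivial.

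Suppose now that $X$ is not Moishezon, so the algebraic dimension satisfies $a(X)<\dim X$. Here we drop the affine-type hypothesis and use only rigidity of $\phi$. The key input is Theorem~2.1 of \cite{D1} (see also Theorem~3 of \cite{D3}): the fibers of the algebraic reduction $\pi_{\rm red}:X\dashrightarrow V$ are contained in the orbits of local isometries of any rigid holomorphic geometric structure on $X$. Because $a(X)<\dim X$, the general fiber of $\pi_{\rm red}$ has positive dimension, and therefore so does the orbit of local isometries through a general point. By Gromov's theorem recalled in Section~\ref{section: geometric structures}, the holomorphic tangent space to any such orbit is spanned by local Killing vector fields for $\phi$, and at least one of them is non-zero.

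The main conceptual hurdle, rather than any technical difficulty, is recognizing the complementarity of the two hypotheses. In the Moishezon case the algebraic reduction is trivial, so all the work must be carried out by $c_1(X)=0$ and affine type (through Theorem~\ref{moishezon}); in the non-Moishezon case the Chern class and affine-type assumptions are unused, and non-trivial Killing fields arise automatically from the positive-dimensional fibers of the algebraic reduction via \cite{D1,D3}. Since the dichotomy Moishezon versus non-Moishezon exhausts all compact complex manifolds, these two arguments together prove the corollary.
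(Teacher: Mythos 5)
Your proof is correct and uses exactly the same two ingredients as the paper --- Theorem~2.1 of \cite{D1} relating the algebraic reduction fibers to orbits of local isometries, and Theorem~\ref{moishezon}\,(i) in the Moishezon case; the paper merely packages the same dichotomy as a proof by contradiction (a trivial Killing algebra forces the reduction fibers to be zero-dimensional, hence $X$ Moishezon, contradicting local homogeneity). No substantive difference.
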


\begin{proof}
To prove this statement by contradiction, assume that the Lie algebra of Killing vector fields for $\phi$ is trivial. Then \cite[Theorem 
2.1]{D1} (see also \cite[Theorem 3]{D2}) implies that the fibers of the algebraic reduction of $X$ have 
dimension zero, meaning $X$ is Moishezon. But Theorem \ref{moishezon} says that $\phi$ is locally homogeneous on an 
open dense subset of $X$. Consequently, the Lie algebra of Killing vector fields is transitive on an open
dense subset in $X$, in particular, the Lie algebra of Killing vector fields is non-trivial. In view of this contradiction the
proof is complete.
\end{proof}

\section{Automorphism group and fibrations by complex tori}\label{section main results}

This section is devoted to generalizing Corollary~\ref{thmFujiki} to other classes of 
manifolds, namely compact complex manifolds with algebraic dimension at most one and trivial 
canonical bundle and compact complex threefolds with trivial 
canonical bundle (see Corollary~\ref{thm part cases}). The combination of Corollary~\ref{thmFujiki}, and Corollary~\ref{thm part cases} yields
Theorem \ref{main}.

The following statement is the main result of this section, from which we will easily derive Corollary~\ref{thm part cases}.

\begin{theorem}
\label{main lemma}
Let $X$ be a compact simply connected complex manifold with trivial canonical
bundle $K_X$. Assume that $X$ bears a holomorphic rigid geometric structure $\phi$ of affine type. Then the following statements
hold:
\begin{enumerate}
\item[(i)]\, There exists a holomorphic submersion $\pi \,:\, X\,\longrightarrow\, B$ to a simply connected Moishezon manifold $B$ with
globally generated canonical bundle $K_B$ such that the fibers of $\pi$ are complex tori.

\item[(ii)]\, The fibration $\pi$ is not isotrivial. Equivalently, $K_B$ is not trivial.

\item[(iii)]\, There exists a maximal connected abelian subgroup $A$ of the automorphism group $\rm{Aut}(X,\, \phi)$ whose orbits 
coincide with the fibers of $\pi$. Moreover, $A$ is noncompact and its (real) maximal compact subgroup $K$ acts freely and transitively 
on the fibers of $\pi$ (hence $X$ is a $C^\infty$ principal $K$--bundle over $B$).
\end{enumerate}
\end{theorem}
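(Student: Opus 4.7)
The plan is to combine Theorem~\ref{act alg red}, Lemma~\ref{vol}, and the simple-connectedness of $X$, exploiting Corollary~\ref{thmFujiki} to rule out $X$ being Fujiki (in particular Moishezon). First, I would apply Theorem~\ref{act alg red} to produce a maximal connected complex abelian subgroup $A \subset \mathrm{Aut}(X,\phi)$ and an open dense $U \subset X$ on which the algebraic reduction $\pi_{red}: X \dashrightarrow V$ is defined, with its fibers lying inside $A$-orbits. Lemma~\ref{vol} applied to a trivialization of $K_X$ yields an $A$-invariant finite smooth measure $\mu$. Note that $A$ is positive-dimensional: otherwise $\pi_{red}$ would be bimeromorphic, $X$ would be Moishezon, and Corollary~\ref{thmFujiki} would force $\pi_1(X)$ to be infinite, contradicting simple-connectedness.

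The main technical step, which I expect to be the principal obstacle, is to show that every $A$-orbit is a closed complex torus of constant dimension $k := \dim_{\mathbb C} A$. Write $A \simeq \mathbb C^k/\Gamma$ and, for $x \in X$, let $A_x$ denote the stabilizer. Rigidity of $\phi$ implies that any element of the identity component $A_x^0$, fixing $x$ and preserving $\phi$, acts trivially on a neighborhood of $x$ by the finite-jet rigidity, hence on all of $X$ by analytic continuation; effectiveness of $A\subset \mathrm{Aut}(X,\phi)$ then forces $A_x^0 = \{e\}$, so every orbit has dimension $k$. Over $U$, the $A$-orbits contain the compact analytic fibers of $\pi_{red}$; together with Poincaré recurrence for the finite invariant measure $\mu$, this forces generic orbits to be relatively compact, hence closed. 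Rigidity of $\phi$ then propagates compactness to every orbit, so each $A_x$ is a discrete cocompact lattice in $A$ and every orbit is a compact complex torus of dimension $k$.

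The orbits then form a regular holomorphic foliation with leaf space $B := X/A$, a complex manifold, and $\pi: X\to B$ is a holomorphic submersion with complex torus fibers. The long exact sequence of homotopy groups for $\pi$ gives $\pi_1(B) = 0$. The algebraic reduction $\pi_{red}$ factors (up to bimeromorphic modification) through $\pi$, and a generic complex torus has algebraic dimension zero, so $\dim B = a(X)$ and $B$ is bimeromorphic to $V$, hence Moishezon. For the global generation of $K_B$: since the fibers of $\pi$ are complex tori, the relative canonical $K_{X/B}$ is the pullback of the Hodge line bundle $L := \pi_*K_{X/B}$ on $B$, so $K_X = \pi^*(K_B \otimes L)$ trivial implies $K_B \simeq L^{-1}$; a trivializing section of $K_X$, restricted fiberwise to the translation-invariant top form on each torus, descends to a nowhere-vanishing global section of $L^{-1} = K_B$, yielding global generation.

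For (ii) and (iii): triviality of $K_B$ is equivalent to triviality of $L$, which is in turn equivalent to isotriviality of the family $\pi$ (in the presence of the rigid structure $\phi$). If this held, a finite \'etale cover of $X$ would split as $\widetilde B \times T$ with $T$ a non-trivial complex torus; but simple-connectedness of $X$ forces every \'etale cover of $X$ to be trivial, so $X = \widetilde B \times T$ and $\pi_1(T) \subset \pi_1(X) = 1$, contradicting $\dim T = k > 0$. The same argument shows $A$ is non-compact: a compact complex torus $A$ would contribute $\pi_1(A) = \mathbb Z^{2k}$ to $\pi_1(X) = 1$. Finally, the maximal compact real subgroup $K \subset A$ is a real torus of real dimension $2k$ acting on each complex torus fiber (of real dimension $2k$); the action is transitive by dimension count, and it is free because $K\cap A_x$ is a discrete subgroup of the compact group $K$ acting trivially on a whole fiber (via translations on $A/A_x$), hence trivial by rigidity of $\phi$. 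This realizes $X$ as a $C^\infty$ principal $K$-bundle over $B$.
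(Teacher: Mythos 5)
Your proposal follows the same skeleton as the paper's proof (get $A$ from Theorem~\ref{act alg red}, use the invariant measure of Lemma~\ref{vol} to compactify orbits, identify the orbits as tori, build $\pi:X\to B$, then play $K_B$ against simple connectedness), but the key technical steps are justified by arguments that do not work, and in one case by a claim that is actually false in the setting of the theorem. The central gap is your assertion that rigidity of $\phi$ forces $A_x^0=\{e\}$ because ``an element fixing $x$ and preserving $\phi$ acts trivially near $x$ by finite-jet rigidity.'' Rigidity says a local isometry is \emph{determined} by its finite-order jet at $x$; it does not say that jet is trivial (the map $z\mapsto -z$ on a flat torus preserves the affine connection and fixes a point). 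Worse, the conclusion fails here: the orbits are tori of some dimension $m$, and if one had $\dim_{\mathbb C}A=m$ then the fundamental vector fields of a basis of $\mathrm{Lie}(A)$ would frame $T_{X/B}$, trivializing $K_{X/B}$ and hence $K_B$ --- contradicting item (ii). So necessarily $\dim_{\mathbb C}A>m$ and $A_x^0$ is positive-dimensional for every $x$. What is true, and what the paper proves in Steps~1--2 by linearizing compact stabilizers, using that $A$ is abelian (so stabilizers act trivially both on the orbit tangent space and on the normal space) and that orbits are locally closed, is that all $A$-orbits have the \emph{same} dimension and that the maximal compact $K$ acts \emph{freely}. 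Likewise ``Poincar\'e recurrence $\Rightarrow$ relatively compact $\Rightarrow$ closed'' is not a valid chain (an irrational line on a torus is relatively compact and not closed); compactness of orbits comes from algebraicity of the stabilizers (finitely many components) plus Montgomery's theorem, as in Gromov and D'Ambra--Gromov.

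Three further steps are broken. First, your global-generation argument would produce a nowhere-vanishing section of $K_B\simeq (\pi_*K_{X/B})^{\vee}$ and hence prove $K_B$ trivial, again contradicting (ii); there is no canonical normalization of the fiberwise invariant volume form precisely because the evaluation map $\mathrm{Lie}(A)\to T_x(\pi^{-1}(b))$ degenerates over a divisor in $B$. The paper instead contracts the trivialization $\omega$ of $K_X$ with $m$-tuples of fundamental vector fields chosen to span the fiber tangent space over a \emph{prescribed} point $b$, obtaining for each $b$ a section of $K_B$ nonvanishing at $b$. Second, your proofs of (ii) and of the non-compactness of $A$ rest on ``$\pi_1(\text{fiber})\subset\pi_1(X)$,'' which is false for fibrations (an elliptic K3 surface is a simply connected torus fibration), and on the claim that an isotrivial torus family over a simply connected base splits as a product after an \'etale cover, which is also false (there are non-trivial principal elliptic bundles over $\mathbb{P}^1$). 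The paper's route is to show that triviality of $K_B$ upgrades $\pi$ to a \emph{holomorphic principal} torus bundle over the Moishezon base $B$ and then to invoke Corollary~\ref{torus bundle}, whose proof uses the Bochner principle of Theorem~\ref{polystability} on $B$; this input is genuinely needed and absent from your argument. Third, the algebraic reduction does not factor through $\pi$ --- the containment in Theorem~\ref{act alg red} goes the other way, so it is $\pi$ that factors through the reduction, yielding a surjection $V\to B$ and hence that $B$ is Moishezon; your claim that $B$ is bimeromorphic to $V$ is neither needed nor justified.
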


\begin{proof}
Take a simply connected compact complex manifold $X$ endowed with a rigid holomorphic geometric structure $\phi$ of affine type.
As done in the proof of Theorem \ref{act alg red}, consider the Lie subalgebra of 
${\rm H}^0(X,\, TX)$ corresponding to the subgroup ${\rm Aut}_0(X,\, \phi)\, \subset\,{\rm Aut}_0(X)$, and fix a basis
$$\{X_1,\, \ldots,\, X_{k}\} \,\subset\, \text{Lie}({\rm Aut}_0(X,\, \phi))\, \subset\,{\rm H}^0(X,\, TX)$$
of this subalgebra. Note that the latter subalgebra is non-trivial (i.e., $k\ge 1$) by Corollary~\ref{Kill rigid} coupled with Theorem~\ref{extend}. 

Let $\phi' \,=\,(\phi,\, X_1,\, \ldots ,\, X_k)$ be the rigid geometric structure on $X$;
its automorphism group is denoted by ${\rm Aut}(X,\, \phi')$. From Theorem \ref{act alg red}
we know that the maximal connected subgroup $A\,:=\,{\rm Aut}_0(X,\, \phi')$ of
${\rm Aut}(X,\, \phi')$ is abelian. 
Since $A$ preserves a smooth measure on $X$ (see Lemma \ref{vol}) its orbits 
are compact and they coincide with the orbits of the maximal compact (real) subgroup
\begin{equation}\label{f6}
K\, \subset\, A
\end{equation}
(see \cite[Section 3.7]{Gr} and \cite[Section 3.5.4]{DG}). It should be mentioned that the proof in \cite{Gr,DG} first shows that 
the stabilizer $A(x)$ of any point $x \,\in\, X$ for the action of $A$ on $X$ is an algebraic group, and hence the stabilizer has only 
finitely many connected components; then their proof uses the main result of \cite{Mon} which asserts that for a homogeneous space 
$A/A(x)$ with a finite $A$--invariant measure, if $A(x)$ has only finitely many connected components, then $A/A(x)$ is compact and 
furthermore the action of the maximal compact (real) subgroup $K$ of $A$ on $A/A(x)$ is transitive.

\medskip

{\bf Step 1. All $A$--orbits in $X$ have the same dimension. }

Arguing by contradiction, assume that there exists an $A$--orbit $$\textbf{O}\,\, \subset\,\, X$$ whose 
dimension is strictly less than the maximum of the dimensions of the $A$-orbits in $X$. Since
$A$-orbits and $K$-orbits coincide (see \eqref{f6}), from the above
property of $\textbf{O}$ it follows immediately that the stabilizer
$K_o\, \subset\, K$ of a point $o\,\in\, 
\textbf{O}$ for the action of $K$ is a real Lie subgroup of positive dimension. The maximal connected 
subgroup $K^0_o\, \subset\, K_o$ is a compact connected abelian group, and hence we have
\begin{equation}\label{f2}
K^0_o \,=\, (S^1)^l
\end{equation}
for some positive integer $l$. The action of $K^0_o$ linearizes locally
(on some neighborhood of $o$ in $X$), meaning it is locally 
isomorphic to the linear action of $K^0_o$ on $T_oX$ given by the differential
of the action of $K^0_o$ on $X$. The group $K$ being abelian, the 
action of $K^0_o$ on the tangent space of the orbit $T_o\textbf{O}$ is actually trivial;
indeed, the action of $K^0_o$ on $T_o\textbf{O}$ is conjugate 
to the restriction, to $K^0_o$, of the adjoint action of $K$ on the quotient
${\rm Lie}(K)/{\rm Lie}(K^0_o)$ of Lie algebras. Moreover, $K^0_o$ preserves the 
orthogonal complement
\begin{equation}\label{f7}
V_o \,=\,(T_o\textbf{O})^{\perp}\, \subset\, T_oX
\end{equation}
with respect to any $K^0_o$--invariant Hermitian form on $T_oX$. Fix a
$K^0_o$--invariant Hermitian form on $T_oX$.

Denote by $A_o$ the stabilizer of $o\, \in\, \textbf{O}\, \subset\, X$ for the action of $A$ on $X$. It is a 
complex Lie subgroup of $A$; in fact $A_o$ is identified with a complex algebraic subgroup of $D^{r+s}$ for some 
integer $s$ \cite{Gr, DG}. Denote by $C$ the smallest connected complex Lie subgroup of $A$ containing $K^0_o$; 
the Lie algebra of $C$ is the image of ${\rm Lie}(K^0_o)\oplus \sqrt{-1}{\rm Lie}(K^0_o)$ in ${\rm Lie}(A)$. Since 
the $K^0_o$--action on $T_oX$ is linearizable, the $C$-action on $T_oX$ is linearizable as well (in fact, it
is linearizable with 
respect to the same coordinates). More precisely, $C$ is isomorphic to $({\mathbb C}^*)^l$ acting on $V_o$ 
diagonally (see \eqref{f2} and \eqref{f7}); in other words, $V_o$ splits as a direct sum of complex lines
\begin{equation}\label{eed}
V_o\, =\,L_1 \oplus \cdots\oplus L_p,
\end{equation}
such that on each direct summand $L_i$ the group $C$ acts through a character defined by
\begin{equation}\label{f8}
(t_1, \,\ldots,\, t_l) \,\longmapsto\, t_1^{n_{1,i}} \cdot t_2 ^{n_{2,i}} \cdot \cdots
\cdot t_l^{n_{l,i}}
\end{equation}
for every $(t_1, \,\ldots,\, t_l)\,\in \,({\mathbb C}^*)^l$. Note that $C$ acts trivially on $T_o\textbf{O}$, because
$K^0_o$ acts trivially on $T_o\textbf{O}$.

Assume that the character of $C$ in \eqref{f8} corresponding to an eigenline $L_i$ in \eqref{eed} is not trivial, in 
other words, $n_{j,i}\, \not=\, 0$ for some $1\, \leq\, j\, \leq\, l$ (see \eqref{f8}). 
Then $L_i \setminus \{ 0 \}\, \subset\, V_o$ is a $C$--orbit; the origin $0\, \in\, V_o$ is 
an accumulation point of this orbit. From this it follows that for any open neighborhood $U\, 
\subset\,X$ of $o$ on which the $K^0_o$--action is linearizable, there are $C$--orbits 
(and hence $A$--orbits) in $U\setminus (U \bigcap \textbf{O})$ for which $o \,\in 
\,U\bigcap \textbf{O}$ is an accumulation point.

But this contradicts the fact that the orbits of $A$ in $X$ are locally closed (in fact 
they are compact). Therefore, we conclude that $n_{j,i}\,=\, 0$ for all
$1\, \leq\, j\, \leq\, l$ and $1\, \leq\, i\, \leq\, p$ (see \eqref{f8}).

Consequently, the $K^0_o$--linear action on $T_oX$ must be trivial, and hence 
the $K^0_o$--action is trivial on the open neighborhood of $o$ where the action 
is linearized. Using analyticity, this implies that the $K^0_o$--action on $X$ is trivial. This is
a contradiction because $l$ in \eqref{f2} is positive.

Therefore, we conclude that all $A$--orbits in $X$ have the same dimension.

\medskip

{\bf Step 2. The action of $K$ on $X$ is free. }

Take any point $x_0 \,\in\, X$; let $K(x_0)\, \subset\, K$ be the identity component of the stabilizer of $x_0$ for the action of $K$ on $X$. 
Since $K(x_0)$ is compact, its action linearizes on a neighborhood of $x_0$ in $X$. For 
any $k \,\in\, K(x_0)$, the differential $dk(x_0)$ acts trivially on $T_{x_0}(Kx_{0})\, 
=\,T_{x_0}(Ax_0)$; indeed, as mentioned earlier, this action is induced by the restriction, to 
$K(x_0)$, of the adjoint representation of $K$, and this adjoint action is trivial because $K$ 
is abelian. Since $K(x_0)$ is compact, the orthogonal complement of $T_{x_0}(Kx_{0})\, 
=\,T_{x_0}(Ax_0)\, \subset\, T_{x_0} X$, for a $K(x_0)$--invariant Hermitian form on $T_{x_0}X$, is 
actually $K(x_0)$--invariant.

On the other hand, the differential $dk(x_0)$ acts trivially on $T_{x_0}X/T_{x_0}(Ax_0)$ 
because the action of any element of $A$ (in particular of $k$) preserves each $A$--orbit; in
other words, $k$ acts trivially on the space of $A$--orbits.

This implies that the linear action of $K(x_0)$ on $T_{x_0}X$ is trivial. From this
it follows that the action of $K(x_0)$ on $X$ is trivial; indeed, the action of $K(x_0)$ is
linearizable, so on any open neighborhood of $x_0 \,\in\, X$ on which 
the action of $K(x_0)$ is linearizable, the action of $K(x_0)$ is trivial, because the action
of $K(x_0)$ on $T_{x_0}X$ is trivial.
In view of this, since the $A$--action, and therefore the $K$--action, on $X$ is faithful, we now conclude
that the group $K(x_0)$ is trivial. Consequently, the action of $K$ on $X$ is actually free.

\medskip 

{\bf Step 3. Fibers of $X\,\longrightarrow\, X/K$ are tori. }

It now follows that $X$, equipped with the action of $K$, has the structure of a real principal $K$--bundle over 
the smooth real manifold $B\,=\, X/K$. The $K$--orbits are complex submanifolds because they are also $A$--orbits 
and $A$ is a complex Lie group acting holomorphically on $X$. This implies that $B$ is also a complex manifold, 
and the projection
\begin{equation}\label{f9}
\pi\,\,:\,\, X\, \,\longrightarrow\,\, X/K\,\,=\,\, B
\end{equation}
is in fact a holomorphic submersion.

Since $A$ is abelian, and any $A$--orbit $Ax_0$ is identified with $A/(A(x_0))$ with $A(x_0)$ being the stabilizer of 
$x_0$ for the action of $A$ on $X$, the holomorphic tangent bundle $T(Ax_0)$ is holomorphically trivial.
In fact, a holomorphic trivialization of 
$T(Ax_0)$ is obtained by fixing a basis of the abelian Lie algebra ${\rm Lie}(A)/{\rm Lie}(A(x_0))$ (the adjoint 
representation of $A$ is trivial because $A$ is abelian). As every $A$--orbit is compact with its holomorphic tangent bundle 
trivialized by commuting holomorphic vector fields, we conclude that every $A$--orbit in $X$ is a compact complex 
torus \cite{Wa}.

\medskip

{\bf Step 4. $K_B$ is globally generated. }

First, recall that since the fibers of $\pi$ in \eqref{f9} are connected and $X$ is simply
connected, the base $B$ is simply connected as well.

Denote by $m$ the complex dimension of the fibers of $\pi$. Take a point
\begin{equation}\label{f12}
b\,\, \in\,\, B\, .
\end{equation}
Choose a family of holomorphic vector fields on $X$ belonging to the Lie algebra of $A$ $$(X_1,\, \ldots,\, X_m)$$ 
(they are chosen from the fundamental vector fields for the $A$--action) satisfying the condition that their 
restrictions to $\pi^{-1}(b)$ span the tangent bundle $T(\pi^{-1}(b))$, hence they form a basis of the latter). This implies that the locus of all $b'\, 
\in\, B$ such that the restrictions of $(X_1,\, \ldots,\, X_m)$ to $\pi^{-1}(b')$ span $T(\pi^{-1}(b'))$ is an 
open dense subset of $B$ whose complement is a closed complex analytic subspace.

Let
\begin{equation}\label{f10}
\omega\, \in\, {\rm H}^0(X,\, K_X) \setminus\{0\}
\end{equation}
be a trivializing section of $K_X$. Then the holomorphic form
\begin{equation}\label{f11b}
\omega'\, :=\, i_{X_1}\circ i_{X_2}\circ\cdots \circ i_{X_m}\omega
\, \in\, {\rm H}^0(X,\, \Omega^{n-m}_X)\setminus\{0\}\, ,
\end{equation}
where $n\,=\, \dim_{\mathbb C}X$, satisfies the equation
\begin{equation}\label{f11}
\omega'\, \,=\,\,\pi^*\widehat{\omega}
\end{equation}
for some $\widehat{\omega}\, \in\, {\rm H}^0(B,\, K_B)\setminus\{0\}$,
with $\pi$ being the projection in \eqref{f9}. Note that $\widehat{\omega}(b')\, \not=\, 0$
for all $b'\, \in\, B$ such that the restrictions of $(X_1,\, \ldots,\, X_m)$ to $\pi^{-1}(b')$
span $T(\pi^{-1}(b'))$; in particular, $\widehat{\omega}(b)\, \not=\, 0$
where $b$ is the point in \eqref{f12}. Now moving $b$ over $B$ we conclude that
$K_B$ is generated by its global holomorphic sections. 

\medskip

{\bf Step 5. $B$ is Moishezon. }

\medskip
Using the notation of Theorem~\ref{thue}, consider the algebraic reduction $t\,:\, \widetilde X\,\longrightarrow\, V$ and set $\rho:=\pi \circ \psi \,:\, \widetilde X\,\longrightarrow\, B$. We have a diagram as follows. 
\begin{equation*}
\begin{tikzcd}
\widetilde X \ar[dr, "t", swap] \ar[r, "\psi"] \ar[rr, "\rho", bend left] & X \ar[d, dotted] \ar[r,"\pi"] & B \\
&V& 
\end{tikzcd}
\end{equation*}
The maps $\rho$ and $t$ are proper, surjective with connected fibers, and by Theorem~\ref{act alg red}, $\rho$ contracts every fiber of $t$. It is then classical that $\rho$ factors through $t$, that is, there exists a map $\sigma \,:\, V\,\longrightarrow\, B$ such that $\rho=\sigma \circ t$. Clearly, $\sigma$ is surjective; since $V$ is Moishezon, so is $B$ by \cite[Theorem 2]{Mo}. 

Let us briefly recall how $\sigma$ is constructed, for the reader's convenience. Consider the image $Z$ of $\rho \times t\,:
\,\widetilde X \,\longrightarrow\, B\times V$ and the map $g\,:\, Z\,\longrightarrow\, V$ induced by the projection $\mathrm{pr}_V
\,:\,B\times V \,\longrightarrow\, V$. Let $\nu\,:\, Z^\nu\,\longrightarrow\, Z$ be the normalization of $Z$. Next, $g$ is
surjective, proper and by assumption, one has $|g^{-1}(v)|\,=\,1$ for any $v\,\in\, V$. By Zariski main theorem, the map
$g^\nu\,:\,Z^\nu \,\longrightarrow\, V$ is an isomorphism; set $h\,:=\,\nu \circ (g^\nu)^{-1}\,:\,V\,\longrightarrow\, Z$. If one
defines $\sigma\,:\, V\,\longrightarrow\, B$ by $\sigma\,=:\,\mathrm{pr}_B\circ h$, then we have $\rho\,=\,\sigma \circ t$ as
desired, and have the following commutative diagram:
\[
\begin{tikzcd}
\widetilde X \ar[dr, "t", swap] \arrow[r] \ar[rrr, "\rho", bend left] & Z \ar[d, "g", swap] \arrow[hookrightarrow]{r} & B\times V \ar[r, "\mathrm{pr}_B"] & B \\
&V \ar[rru, "\sigma", swap] \ar[u, "h", bend right, swap]& & 
\end{tikzcd}
\]
This completes the proof of $(i)$.

\medskip

{\bf Step 6. $K_B$ is not trivial and $\pi$ is not locally trivial. }

Argue by contradiction and assume that $K_B$ is trivial. We will prove that the group $K$ has a complex (torus) structure such that the 
quotient map $\pi$ in \eqref{f9} makes $X$ a holomorphic principal $K$--bundle over $B$. Since $X$ is simply connected and $B$ is 
Moishezon by the previous step, this will contradict Corollary~\ref{torus bundle}.

Going back to the proof, our assumption implies that the
holomorphic section 
$\widehat{\omega}\, \in\, {\rm H}^0(B,\, K_B)\setminus\{0\}$ constructed in \eqref{f11} does 
not vanish at any point, and hence $\widehat{\omega}$ trivializes
$K_B$ holomorphically. In view of \eqref{f11b} and \eqref{f11},
this implies that the family of holomorphic vector fields
$$\{X_1,\, \ldots,\, X_m\}$$
in \eqref{f11b} satisfies the condition that $\{X_1(x),\, \ldots,\, X_m(x)\}\, \subset\,
T_xX$ are linearly independent for every $x\, \in\, X$. Consequently,
the natural evaluation map
\begin{equation}\label{b1}
\beta\,\,:\,\, \bigoplus_{j=1}^m {\mathbb C}\cdot X_j\,\,\longrightarrow\,\,
T_{X/B}\, ,
\end{equation}
where $T_{X/B}\, \subset\, TX$ is the relative holomorphic tangent bundle
for $\pi$ in \eqref{f9}, is a holomorphic isomorphism. For any $1\, \leq\, j\, \leq\, m$,
let $X'_j\, \in\, {\rm Lie}(A)$ be the element corresponding to the vector field
$X_j$. Let
\begin{equation}\label{b2}
{\mathcal H} \,\, \subset\,\, {\rm Lie}(A)
\end{equation}
be the complex subspace generated by $\{X'_1,\, \ldots,\, X'_m\}$. Let
$$
{\rm Lie}(K)_{\mathbb C}\,\, \subset\, \,{\rm Lie}(A)
$$
be the complex subspace generated by ${\rm Lie}(K)\, \subset\, {\rm Lie}(A)$.

For any $v\, \in\, {\rm Lie}(K)_{\mathbb C}$, there are complex valued functions $f^v_1,\,
\ldots ,\, f^v_m$ on $B$ such that the holomorphic vector field $v'$ on $X$
corresponding to $v$ satisfies the equation
$$
v'(b)\,=\, \sum_{j=1}^m f^v_j(b)\cdot X'_j(b)
$$
for all $b\, \in\, B$. Indeed, this follows immediately from the fact that
$\beta$ in \eqref{b1} is an isomorphism. The functions $f^v_1,\, \ldots,\, f^v_m$ are
evidently holomorphic, and hence they are constants. This implies that
\begin{equation}\label{b3}
{\rm Lie}(K)_{\mathbb C}\, \subset\, {\mathcal H}\, ,
\end{equation}
where ${\mathcal H}$ is the subspace in \eqref{b2}. On the other hand,
$$
\dim_{\mathbb C} {\mathcal H}\,=\, m\, \leq\, \dim_{\mathbb C}{\rm Lie}(K)_{\mathbb C}\, ,
$$
because $\dim_{\mathbb R} {\rm Lie}(K)\,=\, 2m$. This and \eqref{b3} together imply 
that ${\mathcal H}\,=\, {\rm Lie}(K)_{\mathbb C}$. But this implies
that ${\rm Lie}(K)_{\mathbb C}\,=\, {\rm Lie}(K)$, because
$2\cdot\dim_{\mathbb C} {\mathcal H}\,=\, 2\cdot\dim_{\mathbb C} {\rm Lie}(K)_{\mathbb C}
\,=\, \dim_{\mathbb R} {\rm Lie}(K)$.

Since ${\rm Lie}(K) \,=\, {\rm Lie}(K)_{\mathbb C}$, we conclude that $K$ is a 
complex Lie subgroup of $A$, in particular, $K$ is a compact complex torus.
Therefore, the projection $\pi$ in \eqref{f9} makes 
$X$ the total space of a {\it holomorphic principal} $K$--bundle over $B$. This is the sought contradiction (in view of Corollary~\ref{torus bundle}) and $K_B$ is not trivial. 

Let us now show that $\pi$ is not locally trivial arguing by contradiction. We denote by $T$ the fiber, which is a complex torus. Since $\pi$ is locally trivial, one can consider the monodromy operator $\tau_\gamma\in \mathrm{Aut}(T)$ of the fibration associated to a given loop in $B$. Since $B$ is simply connected, we actually have $\tau_\gamma \in \mathrm{Aut}_0(T)\simeq T$. Since any trivialization $\sigma_T$ of $K_T$ is preserved by $\mathrm{Aut}_0(T)$, parallel transport allows us to extend $\sigma_T$ to a trivialization $\sigma_{X/B}$ of $K_{X/B}$. As $K_X$ is trivial, this implies that $K_B$ is trivial.
This is in contradiction with what we have just proved.

The combination of our arguments in the current Step 6 show that, in particular, the local triviality of $\pi$ is equivalent to the 
triviality of $K_B$. Note that this was also proved by Catanese, Oguiso and Peternell, see \cite[Theorem 3.1]{COP}. This completes the 
proof of item $(ii)$.

\medskip

Finally, one can now easily see that $A$ is non-compact. Indeed, arguing by contradiction, $A$ coincides with its maximal compact 
subgroup $K$; so $A$ is a compact complex torus. We have seen that $K$ acts freely and transitively on the fibers of $\pi$. Therefore, 
$\pi$ defines an principal $A$--bundle. This implies that $X$ is a holomorphic principal compact torus bundle over $B$. This leads to a 
contradiction as in the proof of item $(ii)$. This completes the proof of the theorem.
\end{proof}

As a by-product of the proof of Theorem~\ref{main lemma} we obtain another proof of Corollary~\ref{thmFujiki}.

\begin{proof}[{Another proof of Corollary~\ref{thmFujiki}}]
Take a compact, simply connected complex manifold $X$ in Fujiki class $\mathcal C$ with trivial canonical class and
admitting a holomorphic rigid geometric structure. Theorem~\ref{main lemma} shows that a maximal connected abelian subgroup $A$ of
$\rm{Aut}(X,\, \phi)$ has closed orbits in 
$X$, and these orbits coincide with the fibers of the fibration $\pi$ defined in the proof of item $(i)$.
Theorem 2.3 in \cite{GW} (see also \cite{Ho} 
for the particular case of $\mathbb C$--actions) proves that there is a compact {\it complex} torus $K$ in $A$ such that the 
$A$--orbits coincide with the $K$--orbits. Moreover, the induced $K$--action on $X$ is free and the quotient map $X\, \longrightarrow\, 
X/K$ gives a holomorphic principal $K$--bundle. This implies that the fibration $\pi$ is locally trivial, so it
is a holomorphic principal bundle. This is in contradiction with item $(ii)$ in Theorem~\ref{main lemma}.
\end{proof}

The following statement is an easy application of Theorem~\ref{main lemma}.

\begin{corollary}\label{thm part cases}
Let $X$ be a compact complex manifold with trivial canonical bundle bearing a rigid holomorphic geometric structure. Assume that one of the following holds:
\begin{enumerate}[label= $\circ$]
\item The dimension of $X$ is at most three, or
\item The algebraic dimension of $X$ is at most one.
\end{enumerate}
Then the fundamental group of $X$ is infinite.
\end{corollary}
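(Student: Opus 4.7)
The plan is to argue by contradiction, assuming $\pi_1(X)$ is finite. First I would pass to the universal cover $f : \widetilde X \longrightarrow X$: it is finite \'etale, so $\widetilde X$ is a compact simply connected complex manifold with $K_{\widetilde X} = f^* K_X$ trivial, and inherits a rigid holomorphic geometric structure of affine type. Theorem~\ref{main lemma} then applies to $\widetilde X$ and yields a non-isotrivial holomorphic submersion $\pi : \widetilde X \longrightarrow B$ onto a simply connected Moishezon manifold $B$ with $K_B$ globally generated and non-trivial, whose fibers are compact complex tori of some complex dimension $m \ge 1$.

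The next step is to bound $\dim B$ under each hypothesis. Since $B$ is Moishezon, $a(B) = \dim B$; since meromorphic functions on $B$ pull back along $\pi$, one gets $\dim B \le a(\widetilde X)$. Finite \'etale covers preserve algebraic dimension (as $\mathcal M(\widetilde X)$ is a finite field extension of $\mathcal M(X)$), so $a(\widetilde X) = a(X)$. Thus if $a(X) \le 1$, then $\dim B \le 1$; if $\dim X \le 3$, then $\dim B = \dim \widetilde X - m \le 3 - 1 = 2$, the case $\dim X = 1$ being immediate since a compact complex curve with trivial canonical bundle is an elliptic curve, with $\pi_1 = \mathbb{Z}^2$ infinite.

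It then remains to rule out each possible value $\dim B \in \{0,1,2\}$. If $\dim B = 0$, then $\widetilde X$ is itself a fiber of $\pi$, i.e., a compact complex torus, contradicting the simple connectedness of $\widetilde X$. If $\dim B = 1$, then $B$ is a simply connected Moishezon curve, hence $B \cong \mathbb{P}^1$; but $K_{\mathbb{P}^1}$ has no global sections, contradicting the global generation of $K_B$ asserted in Theorem~\ref{main lemma}(i). The only remaining case is $\dim X = 3$, $\dim B = 2$, $m = 1$: here $\pi : \widetilde X \longrightarrow B$ is a smooth (degeneration-free) elliptic fibration over a compact complex surface $B$, and the $j$-invariant of the fibers defines a holomorphic map $j : B \longrightarrow \mathbb{A}^1_{\mathbb{C}}$ (it avoids $\infty$ precisely because no fiber degenerates). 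Compactness of $B$ forces $j$ to be constant, so $\pi$ is isotrivial, contradicting Theorem~\ref{main lemma}(ii).

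The main obstacle is the subcase $\dim X = 3$, $\dim B = 2$, which cannot be handled by pure dimension counting; however, the $j$-invariant argument above resolves it cleanly without requiring heavier tools such as Viehweg or Griffiths semi-positivity for Hodge bundles. The rest of the proof amounts to carefully combining the dimension bound $\dim B \le a(\widetilde X)$ with the basic structural constraints on simply connected Moishezon curves and compact complex tori.
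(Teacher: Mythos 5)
Your proof is correct and follows essentially the same route as the paper: reduce to the simply connected case, invoke Theorem~\ref{main lemma}, bound $\dim B$, and derive a contradiction from the non-global-generation of $K_{\mathbb{P}^1}$ when $\dim B=1$ and from the isotriviality of a smooth elliptic fibration over a compact base when $\dim B=2$. The only differences are cosmetic streamlinings: you bound $\dim B$ directly by $a(\widetilde X)=a(X)$ instead of via the orbit dimension of the abelian group $A$ from Theorem~\ref{act alg red}, you absorb the paper's separate ``open orbit'' sub-case (handled there by Proposition~\ref{loc hom2}) and its appeal to Theorem~\ref{moishezon} for Moishezon threefolds into the uniform cases $\dim B=0$ and $\dim B\le \dim X-1$, and you replace the Teichm\"uller-space lifting argument by the $j$-invariant.
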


\begin{proof}
To prove by contradiction, assume that the fundamental group of $X$ is finite. Replacing $X$ by it
universal cover we will assume that $X$ is simply connected.

{\it Case $a(X) \le 1$. }

\noindent
Denote by $A$ the maximal abelian subgroup of the automorphism group ${\rm Aut}(X,\, \phi)$ in the statement of Theorem \ref{act alg 
red}. Then Theorem \ref{act alg red} shows that $A$ acts on $X$ with orbits containing the fibers of the algebraic reduction of $X$. 
Since the algebraic dimension of $X$ is at most one, the dimension of the $A$--orbits is at least $\dim X-1$. If the action of $A$ on 
$X$ has an open orbit, the geometric structure $\phi$ is locally homogeneous and Proposition \ref{loc hom2} furnishes a contradiction.

Let us now consider the case where the dimension of the $A$-orbits in $X$ is $\dim X -1$. Theorem~\ref{main lemma} constructs a 
holomorphic submersion $\pi \,: \,X \,\longrightarrow\, B$ over a compact simply connected complex manifold $B$ with globally
generated canonical 
bundle $K_B$ such that the fibers of $\pi$ coincide with the $A$--orbits. Since the $A$-orbits in $X$ coincide with the fibers of $\pi$,
we have $\dim B \,=\, 1$. Therefore, $B$ is a compact Riemann surface. This is a contradiction because the canonical bundle of
the only simply connected compact Riemann surface ${\mathbb C}{\mathbb P}^1$ is not globally generated.\\

{\it Case $\dim X \le 3$. }

\noindent
Give the previously treated case and Theorem~\ref{moishezon}, we only need to address the case where the threefold $X$ has algebraic dimension two.

Then the $A$--orbits have dimension at least one. If the orbits have dimension two or three, we conclude as in the proof of the 
previous case. If the dimension of the $A$--orbits in $X$ is one, then the fibration $\pi \,:\, X \,\longrightarrow \,B$ constructed in 
Theorem~\ref{main lemma} is an elliptic fibration; its fibers are elliptic curves. Since the base $B$ is simply connected the map from 
$B$ to the moduli space of elliptic curves lifts to a holomorphic map from $B$ to the upper-half plane (the Teichm\"uller space of 
elliptic curves). Since $B$ is compact, it is a constant map. This implies that $\pi$ is locally trivial, which contradicts $(ii)$ in 
Theorem~\ref{main lemma}. This finishes the proof.
\end{proof}

The following proposition is a consequence of Corollary~\ref{thm part cases}.

\begin{proposition}\label{proj con}
Let $\phi$ be a holomorphic projective connection on a compact complex manifold $X$
with trivial canonical bundle. Then the following two assertions hold:
\begin{enumerate}[label=$(\roman*)$]
\item $X$ admits a holomorphic affine connection $\nabla$ which is 
projectively isomorphic to $\phi$.

\item If $X$ has algebraic dimension at most one, or $X$ is a threefold, then the fundamental group of $X$ is infinite.
\end{enumerate}
\end{proposition}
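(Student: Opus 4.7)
The plan is to reduce $(ii)$ to Corollary~\ref{thm part cases} by first producing, in $(i)$, a holomorphic affine connection on $X$ projectively equivalent to $\phi$. For $(i)$, since $K_X$ is holomorphically trivial, I would fix a nowhere-vanishing section $\omega \in H^0(X, K_X)$ and look for the unique representative $\nabla$ of $\phi$ satisfying $\nabla \omega = 0$. Recall that two holomorphic affine connections representing the same projective connection differ by a term of the form $\alpha \otimes \mathrm{Id} + \mathrm{Id} \otimes \alpha$ for some holomorphic $1$-form $\alpha$, i.e.\ $\nabla'_X Y = \nabla_X Y + \alpha(X) Y + \alpha(Y) X$; the pointwise trace of the latter endomorphism-valued $1$-form equals $(n+1)\alpha$, where $n = \dim X$. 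Consequently, after choosing any local representative $\nabla_U$ of $\phi$, which induces a connection on $K_X|_U$ described by a $1$-form $\beta_U$ through $\nabla_U \omega = \beta_U \otimes \omega$, a suitable scalar multiple of $\beta_U$ provides an $\alpha_U$ for which the shifted connection $\widetilde\nabla_U$ is a local representative of $\phi$ preserving $\omega$.

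Once these canonical local lifts $\widetilde\nabla_U$ have been constructed, I would verify that they glue into a single global holomorphic affine connection $\nabla$ on $X$ projectively equivalent to $\phi$. For $(ii)$, the connection $\nabla$ produced in $(i)$ is by definition a holomorphic geometric structure of affine type on $X$, and it is rigid of order one, as recalled right after Definition~\ref{d-rigid}. Under either of the hypotheses $a(X) \le 1$ or $\dim X \le 3$, one then applies Corollary~\ref{thm part cases} directly to $(X, \nabla)$ to conclude that $\pi_1(X)$ is infinite.

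The main delicate point is the gluing in $(i)$: one must check that any two local representatives of $\phi$ normalized by the condition $\widetilde\nabla \omega = 0$ agree on overlaps. This reduces to the observation that if two projectively equivalent holomorphic affine connections both preserve $\omega$, then the $1$-form $\alpha$ relating them satisfies $(n+1)\alpha = 0$ and therefore vanishes; this forces agreement on overlaps and gives the global holomorphic affine connection. Everything beyond this normalization is formal, and once $\nabla$ has been produced, part $(ii)$ is a direct invocation of Corollary~\ref{thm part cases}.
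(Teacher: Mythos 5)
Your proposal is correct and takes essentially the same route as the paper: part $(ii)$ is, exactly as you say, a direct application of Corollary~\ref{thm part cases} once a global affine representative of $\phi$ has been produced, since a holomorphic affine connection is a rigid holomorphic geometric structure of affine type. For part $(i)$ the paper simply invokes \cite[Lemma~5.6]{BD4} (equivalently, the vanishing in ${\rm H}^1(X,\Omega_X)$ of the Kobayashi--Ochiai obstruction cocycle $d\log\Delta_{ij}$, which is automatic when $K_X$ is trivial); your normalization $\widetilde\nabla\,\omega=0$ against a trivializing section $\omega$ of $K_X$, together with the trace computation $(n+1)\alpha$ and the resulting uniqueness of the normalized local representative, is precisely the explicit construction underlying that vanishing, so the two arguments coincide in substance.
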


\begin{proof} $(i)$. Since $K_X\,=\, {\mathcal O}_X$, there is a (global)
holomorphic torsionfree affine connection projectively equivalent to
the projective connection $\phi$ \cite[p.~7449, Lemma 5.6]{BD4}. 

This can also be seen as a direct consequence of the results in \cite{Gu} and \cite{KO},
which is explained below.
There exists a holomorphic affine connection representing the projective connection 
$\phi$ if and only if the cocycle (3.2) in \cite{KO} (defined as $d{\rm 
log}(\Delta_{ij})$, where $\{\Delta_{ij}\}$ is the 1-cocycle of the canonical bundle $K_X$) 
vanishes in ${\rm H}^1(X, \,\Omega_X)$; 
see the explicit formula (3.6) in \cite[p.~78--79]{KO}. This condition is satisfied if 
and only if the canonical bundle $K_X$ admits a holomorphic affine connection (this 
coincides with the vanishing condition of the Atiyah class for $K_X$ \cite[p.~195, 
Theorem 5]{At}; see also \cite[p.~96--97]{Gu} for an alternative approach). In the 
particular case where $X$ is compact and K\"ahler, this is equivalent to the vanishing 
of the real first Chern class of $X$. In any case, the above condition is automatically 
satisfied when, as it happens in our situation, $K_X$ is trivial.

$(ii)$.\, Since a holomorphic affine structure is rigid geometric of affine type, this is 
indeed a particular case of Corollary~\ref{thm part cases}.
\end{proof}

\begin{remark}\label{end conj}
The last argument in the proof of Corollary~\ref{thm part cases} (involving the Teichm\"uller space) generalizes to the case where the 
fibers of the fibration $\pi$ are polarized abelian varieties. Hence non-isotrivial fibrations $\pi$ as in Theorem~\ref{main lemma} do 
not exist if $X$ is a projective manifold with trivial canonical bundle. The fibrations constructed in Theorem~\ref{main lemma} neither 
exist when $X$ is a compact K\"ahler Calabi--Yau manifold (see Theorem 3.1 in \cite{TZ} or Section 4 in \cite{Bern09}). We conjecture 
that they do not exist if $X$ is a compact simply connected complex manifold with trivial canonical bundle. This conjecture implies 
that compact simply connected complex manifolds with trivial canonical bundle do not admit holomorphic rigid geometric structures.
\end{remark}

\section*{Acknowledgements}

The authors are grateful to Fr\'ed\'eric Campana for very helpful comments and, in particular, for his suggestion to use Theorem 4.9 of 
\cite{CP} in the proof of Proposition~\ref{prepop}. The authors thank also Uri Bader, St\'ephane Druel, Hsueh-Yung Lin, Laurent Meersseman, Mihnea Popa and Valentino Tosatti for very helpful discussions. We thank the referee for his meticulous reading of the manuscript and for his many suggestions to improve the presentation. 

The first and third named authors thank ICTS Bangalore and IISc Bangalore for hospitality.
The third-named author wishes to thank T.I.F.R. Mumbai for hospitality. 
This work has been supported by the French government through the UCAJEDI Investments in the 
Future project managed by the National Research Agency (ANR) with the reference number 
ANR2152IDEX201. The first-named author is partially supported by a J. C. Bose
Fellowship (JBR/2023/000003). 
The second and fourth-named authors acknowledge the support of the French Agence Nationale de la Recherche (ANR) under reference ANR-21-CE40-0010.
The second-named author wishes to thank the Institut Universitaire de France for providing excellent working condition.

\section*{Declarations}

Declaration of interests. The authors do not work for, advise, own shares in, or receive
funds from any organisation that could benefit from this article, and have declared no
affiliation other than their research organisations.

Data availability statement. No data were generated or used.


\begin{thebibliography}{ZZZZZ}

\bibitem[Am]{Am} A. M. Amores, Vector fields of a finite type $G$-structure, 
\textit{Jour. Diff. Geom.} \textbf{14} (1980), 1--6.

\bibitem[ACRT]{ACRT} B. Anthes, A. Cattaneo, S. Rollenske and A. Tomassini, $\partial {\bar \partial}$-Complex Symplectic and Calabi--Yau 
manifolds: Albanese map, deformations and period maps, {\it Annals of Global Analysyis and Geometry} {\bf 54} (2018), 377--398.

\bibitem[At]{At} M. F. Atiyah, Complex analytic connections in fibre
bundles, \textit{Trans. Amer. Math. Soc.} \textbf{85} (1957), 181--207.

\bibitem[Bea]{Be} A. Beauville, Vari\'et\'es k\"ahleriennes dont la premi\`ere
classe de Chern est nulle, \textit{Jour. Diff. Geom.} \textbf{18} (1983), 755--782.

\bibitem[Ben]{Ben} Y. Benoist, Orbites des structures rigides (d'apr\`es M. Gromov), 
\textit{Progr. Math.} {\bf 145} (1997), 1--17.

\bibitem[Ber]{Bern09} B. Berndtsson, Curvature of vector bundles associated to 
holomorphic fibrations, {\it Ann. of Math.} {\bf 169} (2009), 531--560.

\bibitem[BCHM]{BCHM} C. Birkar, P. Cascini, C.D. Hacon and J. McKernan, Existence of minimal models for varieties of log general type, 
\textit{J. Amer. Math. Soc.} \textbf{23} (2010), 405--468.

\bibitem[Bl]{Bl} A. Blanchard, Sur les vari\'et\'es analytiques complexes, \textit{Ann. Sci. \'Eco.
Nor. Sup.} \textbf{73} (1956), 157--202.

\bibitem[Bo]{Bo1} F. A. Bogomolov, K\"ahler manifolds with trivial canonical class, \textit{Izv. 
Akad. Nauk. SSSR} {\bf 38} (1974), 11--21; English translation in \textit{ Math. USSR Izv.} {\bf 8} 
(1974), 9--20.

\bibitem[BD1]{BD} I. Biswas and S. Dumitrescu, Holomorphic Affine Connections on 
Non-K\"ahler manifolds, \textit{Internat. J. Math.} {\bf 27} (2016), no. 11.

\bibitem[BD2]{BD2} I. Biswas and S. Dumitrescu, Fujiki class $\mathcal C$ and holomorphic
geometric structures, {\it Internat. J. Math.} {\bf 31} (2020), no. 5.

\bibitem[BD3]{BD3} I. Biswas and S. Dumitrescu, Holomorphic Riemannian metric and fundamental group, 
{\it Bull. Soc. Math. Fr.} {\bf 147} (2019), 455--468.

\bibitem[BD4]{BD4} I. Biswas and S. Dumitrescu, Branched holomorphic Cartan geometries and 
Calabi--Yau manifolds, \textit{Int. Math. Res. Not.} {\bf 23} (2019), 7428--7458.

\bibitem[BDG]{BDG} I. Biswas, S. Dumitrescu and H. Guenancia, A Bochner principle and its applications to Fujiki 
class $\mathcal C$ manifolds with vanishing first Chern class, \textit{Comm. Contemp. Math.} \textbf{22} 
(2020), no. 6.

\bibitem[BM]{BM} I. Biswas and B. McKay, Holomorphic Cartan geometries, Calabi--Yau 
manifolds and rational curves, \textit{Diff. Geom. Appl.} \textbf{28} (2010), 
102--106.

\bibitem[Br]{Br} V. Br\^{\i}nzanescu, \textit{Holomorphic vector bundles over compact complex surfaces}, Lectures Notes in Mathematics, 1624,
Editors A. Dold, F. Takens, Springer, Berlin Heidelberg, 1996.

\bibitem[BDPP]{BDPP} S. Boucksom, J.-P. Demailly, M. Paun and T.Peternell, 
The pseudo-effective cone of a compact K\"ahler manifold and varieties of negative Kodaira dimension,
{\it Jour. Alg. Geom.} {\bf 22} (2013), 201--248.

\bibitem[BGL]{BGL} B. Bakker, H. Guenancia and C. Lehn, Algebraic approximation and the decomposition theorem for K\"ahler Calabi--Yau 
varieties, {\it Invent. Math.} {\bf 228} (2022), 1255--1308.


\bibitem[BT]{BT} R. Bott and L. Tu, Differential forms in algebraic topology, {\it Graduate Texts in Mathematics, Springer} {\bf 82} (1982).

\bibitem[CC]{CC} F. Campana and B. Claudon, Abelianity conjecture for special compact K\"ahler 3-folds. (English) Zbl 1337.32029
{\it Proc. Edinb. Math. Soc. II} {\bf 57} (2014) 55--78. 

\bibitem[CP]{CP} F. Campana and M. Paun, Foliations with positive slopes and birational 
stability of orbifold cotangent bundle, {\it Publ. Math. Inst. Hautes \'Etudes Sci.}
{\bf 129} (2019), 1--49.

\bibitem[Ca]{Ca} J. Cao, On the approximation of K\"ahler manifolds by algebraic varieties, {\it Math. Ann.} {\bf 363} (2015), 
393--422.

\bibitem[Cl]{Cl} B. Claudon, Positivit\'e du cotangent logarithmique et conjecture 
de Shafarevich-Viehweg (d'apr\`es Campana, Paun, Taji,$\cdots$), \textit{S\'eminaire 
Bourbaki}, \textbf{1105} (2015), 1--34.

\bibitem[Cle]{Cle} H. Clemens, Double solids, {\it Adv. Math.} \textbf{47} (1983), 107--230.

\bibitem[CGG]{CGG} B. Claudon, P. Graf and H. Guenancia, Numerical characterization of complex torus quotients,
 {\it Comment. Math. Helv.} {\bf 4} (2022), 769--799.

\bibitem[CGGN]{CGGN} B. Claudon, P. Graf, H. Guenancia and P. Naumann, K\"ahler spaces with first zero Chern class: Bochner principle, 
Albanese map and fundamental groups, \textit{J. Reine Angew. Math.} \textbf{786} (2022), 245--275.

\bibitem[Cam]{Cam} F. Campana, The class $\mathcal C$ is not stable by small deformations, {\it Math. Ann.} \textbf{290} (1991), 19--30. 

\bibitem[COP]{COP} F. Catanese, K. Oguiso and T. Peternell, On volume-preserving complex structures on real tori, 
\textit{Kyoto J. Math.} \textbf{50} (2010), 753--775.

\bibitem[DG]{DG} G. D'Ambra and M. Gromov, \textit{Lectures on transformations groups: 
geometry and dynamics}, Surveys in Differential Geometry, Cambridge MA, (1991).

\bibitem[DHP]{DHP} O. Das, C. Hacon and M. P\u{a}un, On the $4$-dimensional minimal model 
program for K\"ahler varieties, arXiv preprint: 2205.12205.

\bibitem[Dem]{Dem92} J.-P. Demailly, Regularization of closed positive currents and intersection theory, 
\textit{J. Algebr. Geom.} \textbf{1} (1992), 361--409.

\bibitem[DeGr]{DeGr} G. Dethloff and H. Grauert, Seminormal complex spaces, {\it Several complex variables~VII}, Encyclopaedia Math.~Sci.
{\bf 74} (1994), 183--220.

\bibitem[DPS]{DPS94} J.-P. Demailly, Th. Peternell and M. Schneider, Compact complex manifolds with numerically effective tangent bundles, 
\textit{J. Algebr. Geom.} \textbf{3} (1994), 295--345.

\bibitem[Dr]{Dru11} S. Druel, Quelques remarques sur la d\'ecomposition de Zariski divisorielle sur les vari\'et\'es dont la premi\`ere classe 
de Chern est nulle, \textit{Math. Zeit.} \textbf{267} (2011), 413--423.

\bibitem[Dr2]{Dr2} S. Druel, A decomposition theorem for singular spaces with trivial canonical class of dimension 
at most five, \textit{Invent. Math.} \textbf{211} (2018), 245--296.

\bibitem[Du1]{D1} S. Dumitrescu, Meromorphic almost rigid geometric structures, 
\textit{Geometry, Rigidity and Group Actions}, Editors Benson Farb and David Fisher, 
Chicago Lectures in Mathematics Series (2011), 32--58.

\bibitem[Du2]{D2} S. Dumitrescu, Structures g\'eom\'etriques holomorphes sur les 
vari\'et\'es complexes compactes, \textit{Ann. Scient. Ec. Norm. Sup} \textbf{34}
(2001), 557--571.

\bibitem[Du3]{D3} S. Dumitrescu, Killing fields of holomorphic Cartan geometries, 
\textit{Monatsh. Math.} \textbf{161} (2010), 145--154.

\bibitem[EGZ]{EGZ} P. Eyssidieux, V. Guedj and A. Zeriahi, Singular K\"ahler-Einstein metrics,
\textit{J. Amer. Math. Soc.} \textbf{22} (2009), 607--639

\bibitem[Fu]{Fu} A. Fujiki, On automorphism group on K\"ahler manifolds, 
\textit{Invent. Math.} \textbf{44} (1978), 225--258.

\bibitem[GHJ]{GHJ} M. Gross, D. Huybrechts and D. Joyce, {\it Calabi--Yau manifolds and related geometries}, 
Lectures at a summer school in Nordfjordeid, Norway, June 2001, Universitext Berlin Springer (2003), 239 p.

\bibitem[GGK]{GGK} D. Greb, H. Guenancia and S. Kebekus, Klt varieties with trivial canonical class: 
holonomy, differential forms, and fundamental groups, \textit{Geom. Topol.} \textbf{23} (2019), 2051--2124.

\bibitem[GKKP]{GKKP} D. Greb, S. Kebekus, S. Kovács and T. Peternell, Differential forms on log canonical 
spaces, {\it Publ. Math. Inst. Hautes \'Etudes Sci.} \textbf{114} (2011), 87–169.

\bibitem[GKP]{GKP} D. Greb, S. Kebekus and T. Peternell, Movable curves and semistable sheaves, {\it Int. 
Math. Res. Not. IMRN} {\bf 2} (2016), 536--570.

\bibitem[GKP11]{GKP11} D. Greb, S. Kebekus and T. Peternell, Singular spaces with trivial canonical class, 
from “Minimal models and extremal rays” (J Kollár, O Fujino, S Mukai, N Nakayama, editors), {\it Adv. Stud. 
Pure Math.} \textbf{70}, Math. Soc. Japan, Tokyo (2016), 67–113

\bibitem[GKP16]{GKP16} D. Greb, S. Kebekus and T. Peternell, \'Etale fundamental groups of Kawamata log terminal
spaces, flat sheaves, and quotients of abelian varieties, {\it Duke Math. J.} {\bf 165} (2016), 1965--2004. 

\bibitem[GW]{GW} C. Gellhaus and T. Wurzbacher, Holomorphic group actions with many 
compact orbits, \textit{Comment. Math. Helv.} \textbf{64} (1989), 639--649.

\bibitem[Gh1]{Gh} E. Ghys, Feuilletages holomorphes de codimension un sur les espaces homog\`enes complexes, \textit{Ann. 
Fac. Sci. Math. Toulouse} \textbf{5} (1996), 493--519.

\bibitem[Gro]{Gr} M. Gromov, Rigid transfomations groups, Editors D. Bernard and Y. Choquet 
Bruhat, \textit{G\'eom\'etrie Diff\'erentielle}, \textbf{33}, Hermann, (1988), 65--139.

\bibitem[Gu]{Gu} R. Gunning, {\it On uniformization of complex manifolds: the role of 
connections}, \textit{Math. Notes}, \textbf{22}, Princeton University Press, Princeton, 1966.

\bibitem[Ha]{Ha} R. Hartshorne, {\it Algebraic geometry}, Graduate Texts in Mathematics,
No. 52. Springer-Verlag, New York-Heidelberg, 1977.

\bibitem[Hi]{Hi} H. Hironaka, Bimeromorphic smoothing of a complex-analytic space, 
\textit{Math. Inst. Warwick Univ.}, England, (1971).

\bibitem[Ho]{Ho} H. Holmann, Analytische periodische Str\"omungen auf kompakten komplexen R\"aumen, \textit{Comment. Math. 
Helv.} \textbf{52} (1977), 251--257.

\bibitem[HP2]{HP2} A. H\"oring and Th. Peternell, Algebraic integrability of foliations with numerically trivial canonical bundle, 
\textit{Invent. Math.} \textbf{216} (2019), 395--419.

\bibitem[HP1]{HP1} A. H\"oring and Th. Peternell, Minimal models for K\"ahler threefolds, 
\textit{Invent. Math.} \textbf{203} (2016), 217--264.

\bibitem[HW]{HW} J.-M. Hwang and E. Viehweg, Characteristic foliation on a hypersurface of general type in a projective symplectic manifold, \textit{Compos. Math.} \textbf{146} 2 (2010), 497--506.

\bibitem[KM]{KM} J. Koll\'ar and S. Mori, {\it Birational geometry of algebraic varieties}, 
Cambridge Tracts in Mathematics 134. Cambridge: Cambridge University Press 254 p. (1998). 

\bibitem[Ko]{Ko} S. Kobayashi, {\it Differential geometry of complex vector bundles},
Publications of the Mathematical Society of Japan, 15. Kan\^o Memorial Lectures, 5, Princeton
University Press, Princeton, NJ; Princeton University Press, Princeton, NJ, 1987.

\bibitem[KO]{KO} S. Kobayashi and T. Ochiai, Holomorphic projective structures on compacty complex surfaces, \textit{Math. Ann.}
\textbf{249} (1980), 75--94.

\bibitem[KS]{KS} S. Kebekus and C. Schnell, Extending holomorphic forms from the regular locus of a complex space to a resolution of singularities, {\it J. Am. Math. Soc.} {\bf 34} (2021), 315--368.

\bibitem[Lin]{Lin} H.-Y. Lin, Algebraic approximations of compact K\"ahler manifolds of algebraic codimension 1, {\it Duke Math. 
Journal} {\bf 169} (2020), 2767--2826.

\bibitem[Mo]{Mo} B. Moishezon, On $n$ dimensional compact varieties with $n$ independent 
meromorphic functions, \textit{Amer. Math. Soc. Transl.} \textbf{63} (1967), 51--77.

\bibitem[Mon]{Mon} D. Montgomery, Simply connected homogeneous spaces, \textit{Proc. Amer. Math. Soc.}, \textbf{I} (1950), 467--469.

\bibitem[No]{No} K. Nomizu, On local and global existence of Killing vector fields, \textit{Ann. 
of Math.} \textbf{72} (1960), 105--120.

\bibitem[Pa]{Pa} M. P\u{a}un, Regularity properties of the degenerate Monge–Ampère equations on compact K\"ahler manifolds, {\it Chin. 
Ann. Math. Ser. B} {\bf 29} (2008), 623--630.

\bibitem[Pe]{Pe} Th. Peternell, {\it Subsheaves in the tangent bundle: integrability, stability and positivity}, Lectures 
given at the ``School on vanishing theorems and effective results in algebraic geometry'', Trieste, 2000.

\bibitem[Po]{Po} D. Popovici, Holomorphic deformations of balanced Calabi--Yau $\delta \bar{\delta}$-manifolds, {\it Ann. Instit. 
Fourier} {\bf 69} (2019), 673--728.

\bibitem[Sh]{Sh} R. Sharpe, {\it Differential Geometry: Cartan's generalization of Klein's Erlangen program}, New York: 
Springer, 1996.

\bibitem[Ta]{Ta} S. Takayama, Local simple connectedness of resolutions of log-terminal singularities, {\it Internat. J. Math.} {\bf 
14} (2003), 825--836.

\bibitem[To]{To} V. Tosatti, Non-K\"ahler Calabi--Yau manifolds, \textit{Contemp. Math.}
\textbf{644} (2015), 261--277.

\bibitem[TZ]{TZ} V. Tosatti and Y. Zhang, Triviality of fibered Calabi--Yau manifolds without singular fibers, {\it Math. Res. Lett.}
{\bf 23} (2014), 905--918.

\bibitem[Ue]{Ue} K. Ueno, {\it Classification theory of algebraic varieties and compact complet spaces}, Notes written in 
collaboration with P. Cherenack, Lecture Notes in Mathematics, Vol. 439, Springer-Verlag, Berlin-New York, 1975.

\bibitem[Va]{Va} J. Varouchas, K{\"a}hler spaces and proper open morphisms, 
\textit{Math. Ann.} \textbf{283} (1989), 13--52.

\bibitem[W]{W} J. Werner, Small resolutions of special three-dimensional varieties, translated by Simon Venter and Nicolar Addington, arXiv preprint: 2208.01383.

\bibitem[Wan]{Wan} J. Wang, On the Iitaka Conjecture $C_{n,m}$ conjecture for K\"ahler Fibre Spaces, {\it Ann. Fac. Sci. Toulouse Math.}
{\bf 30} (2021), 813--897. 

\bibitem[Wa]{Wa} H.-C. Wang, Complex Parallelisable manifolds, \textit{Proc. 
Amer. Math. Soc.} \textbf{5} (1954), 771--776.

\bibitem[Ya]{Ya} S.-T. Yau, On the Ricci curvature of a compact K\"ahler manifold and the complex Monge-Amp\`ere equation. I,
{\it Comm. Pure Appl. Math.} {\bf 31} (1978), 339--411.

\end{thebibliography}
\end{document}